\documentclass[11pt,reqno,letterpaper]{amsart}
\usepackage{amsfonts}
\usepackage{amsmath}
\usepackage{amssymb, esint}
\usepackage{amsthm,color}
\usepackage{enumerate}
\usepackage{mathtools}
\usepackage{enumitem}
\usepackage{url}
\usepackage[hidelinks, bookmarksdepth=3]{hyperref}

\usepackage{xcolor}
\newcommand{\Part}{\operatorname{Part}}
\usepackage{cancel}
\usepackage{ulem}
\usepackage{caption,subcaption}
\usepackage{tikz}
\usepackage{tikz-3dplot}
\usepackage[T1]{fontenc}
\usepackage{mathrsfs}
\usepackage{commath}

\usepackage{longtable}

\usepackage{soul}
\usepackage[
style=alphabetic,
backend=biber,
maxbibnames=99,
minbibnames=99,
maxcitenames=99,
mincitenames=99
]{biblatex}
\AtBeginDocument{
\setlength{\abovedisplayskip}{9pt plus 2pt minus 3pt}
\setlength{\belowdisplayskip}{9pt plus 2pt minus 3pt}
\setlength{\abovedisplayshortskip}{6pt plus 2pt minus 2pt}
\setlength{\belowdisplayshortskip}{6pt plus 2pt minus 2pt}
\setlength{\jot}{4pt}
}

\DeclareFieldFormat[article]{journaltitle}{#1}
\DeclareFieldFormat[book]{title}{#1}

\tdplotsetmaincoords{70}{110}

\tdplotsetmaincoords{70}{110}

\usepackage[letterpaper,margin=1in]{geometry}

\newcommand{\N}{{\mathbb N}}

\newcommand{\cA}{{\mathcal A}}
\newcommand{\cB}{{\mathcal B}}

\newcommand{\cH}{{\mathcal H}}
\newcommand{\cI}{{\mathcal I}}
\newcommand{\cP}{{\mathcal P}}
\newcommand{\cR}{{\mathcal R}}
\newcommand{\cF}{{\mathcal F}}

\def\0{{\mathbf 0}}

\newcommand{\dist}{\operatorname{dist}}

\theoremstyle{plain}
\newtheorem{thm}{Theorem}[section]

\newtheorem{cor}[thm]{Corollary}
\newtheorem{lem}[thm]{Lemma}

\newcommand{\thistheoremnames}{}
\newtheorem*{genericthms}{\thistheoremnames}
\newenvironment{para*}[1]
{\renewcommand{\thistheoremnames}{#1}%
\begin{genericthms}}
{\end{genericthms}}

\newtheorem{defn}[thm]{Definition}

\theoremstyle{remark}

\newtheorem*{claim*}{Claim}

\newtheorem{rem}[thm]{Remark}

\begin{document}

\numberwithin{equation}{section}

\title[A quantitative container characterization]{A quantitative container characterization \\of one-sided testability}

\author[G. Carenini]{Gaia Carenini}
\address{Trinity College Cambridge, Department of Pure Mathematics and Mathematical Statistics, Centre for Mathematical Sciences, Wilberforce Road, Cambridge CB3 0WA, United Kingdom.}
\email{gc645@cam.ac.uk}

\author[C. Seth]{Cameron Seth}
\address{Cheriton School of Computer Science, University of Waterloo, 200 University Ave W, Waterloo, ON N2L 5Z5, Canada}
\email{cjmpseth@uwaterloo.ca}

\author[Y. Yoshida]{Yuichi Yoshida}
\address{National Institute of Informatics (NII), 2-1-2 Hitotsubashi, Chiyoda-ku, Tokyo 101-8430, Japan.}
\email{yyoshida@nii.ac.jp}

\begin{abstract}
We give a quantitative combinatorial characterization of size-oblivious one-sided testability in the dense graph model, resolving a question of Alon, Fischer, Newman, and Shapira. For hereditary graph properties, we prove that one-sided testability is quantitatively equivalent to the existence of suitable hypergraph containers, a central and widely used tool in modern combinatorics. Combining this equivalence with the Alon--Shapira notion of semi-hereditariness yields a quantitative characterization of arbitrary graph properties. The correspondence is effective in both directions and provides explicit translations between tester complexity and container parameters. Our proof is regularity-free and extends uniformly to every fixed finite relational signature of bounded arity, including digraphs, coloured graphs, and hypergraphs. As applications, we obtain quantitative closure results for partition properties and testers for properties defined by the existence of a linearly large induced substructure.

\end{abstract}

\maketitle

\section{Introduction}

\subsection{The dense graph model}

Throughout this paper, a \textit{graph} is a finite simple labelled graph.  Thus
self-loops and multiple edges are not allowed.  A \textit{graph property} is a
family of graphs closed under isomorphism.  A graph property $\Pi$ is
\textit{hereditary} if it is closed under taking induced subgraphs, and it is
\textit{extendable} if every graph $H\in\Pi$ occurs as an induced subgraph of a
member of $\Pi$ on $|V(H)|+m$ vertices for every $m\geq0$.

We work in the \textit{dense graph model}, also called the adjacency-matrix
model, introduced in \cite{goldreich1998property} and surveyed in
\cite{goldreich2017introduction,bhattacharyyaYoshida2022property}.  An input
$G=(V,E)$ is represented by its adjacency predicate
$g:V\times V\to\{0,1\}$, where $g(u,u)=0$, $g(u,v)=g(v,u)$, and
$g(u,v)=1$ precisely when $\{u,v\}\in E$.

\begin{defn}[Distance from a graph property]
If $G$ and $G'$ are graphs on the same $n$-element vertex set, define
\[
\dist(G,G')=\frac{|E(G)\triangle E(G')|}{n^2},
\qquad
\dist(G,\Pi)=\min_{\substack{G'\in\Pi\\V(G')=V(G)}}\dist(G,G').
\]
We say that $G$ is \textit{$\varepsilon$-far from $\Pi$} if
$\dist(G,\Pi)>\varepsilon$; otherwise it is
\textit{$\varepsilon$-close to $\Pi$}.
\end{defn}

The normalization by $n^2$, rather than by $\binom n2$, is immaterial up to an
absolute factor.  We use this convention throughout Part~I.  In the relational
setting of Part~II, each relation is normalized by the number of tuples of its
arity.

A tester has oracle access to the adjacency predicate and to independent
uniform samples from the vertex set.  In the canonical formulation below, a
request for a sample larger than the input is interpreted as a request to
inspect the whole graph.  This convention allows a size-oblivious tester to
decide membership exactly on sufficiently small inputs without being given
their order.

\begin{defn}[Size-oblivious tester]
Let $\Pi$ be a graph property.  A \textit{size-oblivious tester} for $\Pi$ is a
randomized oracle algorithm which, given $\varepsilon>0$ but not $|V(G)|$,
satisfies the following conditions.
\begin{enumerate}[label=\textnormal{(\roman*)}]
\item If $G\in\Pi$, then the tester accepts $G$ with probability at least
$2/3$.
\item If $G$ is $\varepsilon$-far from $\Pi$, then the tester rejects $G$
with probability at least $2/3$.
\end{enumerate}
Its \textit{query complexity} is the maximum number of adjacency queries it
makes, and its \textit{sample complexity} is the maximum number of vertex
labels it inspects.  Both bounds are required to depend only on $\varepsilon$.
The tester has \textit{one-sided error} if it accepts every graph in $\Pi$ with
probability one.
\end{defn}

We tacitly replace complexity bounds by their nonincreasing monotone closures
in the proximity parameter.  A \textit{canonical tester} samples a uniformly
random set $S$ of a prescribed size, queries every pair in $S$, and decides
from the isomorphism type of $G[S]$; if the prescribed sample size exceeds
$|V(G)|$, it inspects the whole graph.  By the canonicalization theorem of
Goldreich and Trevisan \cite{goldreichTrevisan2003three}, a size-oblivious
tester making $Q(\varepsilon)$ adjacency queries can, after changing
$\varepsilon$ by an absolute constant factor, be converted into a canonical
tester sampling $O(Q(\varepsilon))$ vertices.  Conversely, revealing an
induced subgraph on $s$ vertices uses $O(s^2)$ adjacency queries.

For a hereditary property, a canonical one-sided tester may reject exactly
when the sampled induced graph does not belong to the property.  Indeed, every
induced subgraph of a graph in $\Pi$ again lies in $\Pi$, while enlarging the
collection of rejection types outside $\Pi$ can only increase the rejection
probability.

The guiding question of this paper is whether graph properties admitting
size-oblivious one-sided-error testers with prescribed query complexity can be
characterized in combinatorial and quantitative terms.

The qualitative theory is due to Alon and Shapira \cite{AlonShapira}. They proved that every hereditary graph property admits a size-oblivious one-sided-error tester, through an induced removal lemma for possibly infinite families of forbidden induced subgraphs. They also characterized the arbitrary graph properties admitting such testers through the notion of semi-hereditariness. Informally, a property is semi-hereditary if it is contained in a hereditary property $\cH$, and every sufficiently large graph in $\cH$ is already close to the original property.

Thus the qualitative boundary of one-sided testability is well understood. What remains unclear is the quantitative structure controlling the number of queries. Alon, Fischer, Newman, and Shapira asked in Section~8 of \cite{alon2008characterization} for a quantitative version of the Alon--Shapira characterization. More recently, Goldreich \cite{goldreich2021open} highlighted the broader problem of classifying graph properties according to their query complexity, pointing to the size-oblivious model as a particularly natural regime in which such a characterization might be possible.

Our main result gives such a characterization. The answer is expressed through suitable hypergraph containers.

\subsection{From regularity to containers}

The proof of the qualitative theorem of Alon and Shapira is based on a strengthened form of Szemerédi's regularity lemma due to Alon, Fischer, Krivelevich, and Szegedy \cite{alon2000efficient}. This framework is well suited to proving that a tester with complexity independent of the size of the graph exists. It is, however, poorly adapted to the quantitative question considered here.

Regularity-based proofs generally lead to very large quantitative losses. Even when a property is described by finitely many forbidden induced subgraphs, the resulting bounds are typically of tower type. For hypergraph properties, regularity arguments may produce still larger, Ackermann-type dependencies. When a hereditary property is described by an arbitrary, possibly infinite, collection of forbidden induced subgraphs, the compactness step in the proof need not yield an effective bound at all. In particular, regularity does not retain the dependence on an arbitrary prescribed tester complexity.

Analytic and limit approaches give a complementary explanation of the same qualitative phenomenon. The theory of convergent dense graph sequences and testable graph parameters was developed by Borgs, Chayes, Lovász, Sós, Szegedy, and Vesztergombi \cite{BorgsChayesLovaszSosSzegedyVesztergombi}. Lovász and Szegedy \cite{LovaszSzegedy} placed hereditary testing in the setting of sampling-continuous graphon properties, while Elek and Szegedy \cite{ElekSzegedy} developed measure-theoretic and ultraproduct methods for hypergraphs. These approaches explain the generality of hereditary testability, but they are likewise not designed to retain effective dependence on tester complexity.

It is therefore natural to search for a more effective combinatorial mechanism. In extremal combinatorics, hypergraph containers have repeatedly played precisely this role: they replace a global regularity decomposition by a direct description of the entire family of objects avoiding a collection of forbidden configurations.

The hypergraph container method was introduced independently by Balogh, Morris, and Samotij \cite{balogh2015independent} and by Saxton and Thomason \cite{saxton2015hypergraph}, building on earlier work of Kleitman and Winston \cite{kleitman1982number}; see \cite{balogh2018method} for a survey. Over the past decade, it has become one of the main tools of extremal and probabilistic combinatorics.

Informally, a hypergraph container theorem gives a small family of subsets, called containers, such that every independent set of the hypergraph is contained in one of them, while each container spans only few hyperedges. Thus, rather than studying all independent sets individually, one can cover them by a controlled collection of larger sets that are still almost independent.

A standard example is the enumeration of triangle-free graphs. Consider the $3$-uniform hypergraph whose vertex set is $E(K_n)$ and whose hyperedges are the triples of edges forming triangles. Its independent sets are precisely the triangle-free graphs on $[n]$. Applying the container method gives a family of subsets of $E(K_n)$, which we identify with graphs on $[n]$, such that every triangle-free graph is contained in one of these graphs and each container contains few triangles. Moreover, there are only $2^{o(n^2)}$ containers. Supersaturation then implies that every container has at most $n^2/4+o(n^2)$ edges. Since each such container has at most $2^{n^2/4+o(n^2)}$ subgraphs, the number of labelled triangle-free graphs is at most $2^{n^2/4+o(n^2)}$. The matching lower bound follows by taking arbitrary subgraphs of a balanced complete bipartite graph.

The important point for us is that containers turn a quantitative abundance of forbidden configurations into an effective structural description of all objects avoiding them. Our approach follows the same philosophy. If a graph is far from a hereditary property, then it has many small induced subgraphs violating the property. These bad vertex sets form a dense uniform hypergraph, and the induced subgraphs satisfying the property correspond to independent sets in this hypergraph.

Containers have recently also appeared in property testing, where they were used to obtain improved quantitative bounds for particular classes of properties \cite{blais2024new,blais2025testing,seth2025tolerant, grigorescu2026testing}. Their role in the present paper is more structural: rather than merely providing improved testers for particular properties, suitable containers characterize the quantitative structure underlying one-sided testability itself.

\subsection{Ordered containers}

The main idea underlying our characterization is that, inside any graph that is far from a hereditary property, every induced subgraph satisfying the property is forced to lie inside a substantially smaller ambient vertex set, and that this ambient set is determined by a small amount of information. We formalize this through the following notion.

For a set $V$, let $V_{\neq}^{j}$ denote the family of ordered $j$-tuples of distinct elements of $V$.  We write
$(x)_j=x(x-1)\cdots(x-j+1)$ for the falling factorial, with $(x)_0=1$, and use the convention that $\binom ab=0$ when $b<0$ or $b>a$.

\begin{defn}[Ordered container lemma for a hereditary graph property]
\label{def:ordered-container-graph}
Let $\Pi$ be a hereditary graph property. For $\varepsilon,\eta>0$ and integers $N,q\geq0$, we say that $\Pi$ admits an
$(\varepsilon,\eta,N,q)$-\textit{ordered container lemma} if the following holds.

For every $n\geq N$ and every $n$-vertex graph $G=(V,E)$ that is $\varepsilon$-far from $\Pi$, there exist maps
\(f:\{U\subseteq V:G[U]\in\Pi\}\longrightarrow \bigcup_{j=0}^{q}V_{\neq}^{j}, \qquad C:\operatorname{im}(f)\longrightarrow \binom{V}{\leq(1-\eta)n},\)
such that every entry of $f(U)$ belongs to $U$ and $U\subseteq C(f(U))$ whenever $G[U]\in\Pi$. We refer to $f(U)$ as the \textit{fingerprint} of $U$, and to $C(f(U))$ as its \textit{container}.
\end{defn}

The container associated with an induced $\Pi$-subgraph is constructed by an iterative procedure. At each step, the procedure selects a vertex from the induced subgraph and uses it to discard part of the ambient vertex set. The selected vertices form the fingerprint: a short record of the choices made during the construction, from which the final container can be recovered.

The order on the fingerprint records the successive choices made in the construction of the container. It is not inherited from an ordering of the ambient graph. Thus an ordered container lemma asserts that every induced $\Pi$-subgraph of a graph far from $\Pi$ is contained in a vertex set missing at least an $\eta$-fraction of the ambient vertices, and that this containing set is determined by at most $q$ vertices of the induced subgraph itself.

With this notation in place, we may state our main theorem.

\begin{thm}[Characterization of hereditary properties having specified complexity]
\label{thm:hereditary-characterization}
Let $\Pi$ be a hereditary graph property, and let $Q:(0,1)\to\N$. Then the following hold.
\begin{enumerate}[label=\textnormal{(\roman*)}]
\item If $\Pi$ admits a size-oblivious one-sided-error tester with query complexity at most $Q(\varepsilon)$, then there is an absolute constant $c\in(0,1)$ such that, for every $\varepsilon>0$, the property $\Pi$ admits an ordered container lemma with parameters satisfying
\[
q(\varepsilon)=O(Q(c\varepsilon)),\qquad
\eta(\varepsilon)=\Omega\!\left(Q(c\varepsilon)^{-1}\right),
\qquad
N(\varepsilon)=O(Q(c\varepsilon)).
\]

\item Conversely, if $\Pi$ admits ordered container lemmas with parameters $\eta(\varepsilon)$, $N(\varepsilon)$, and $q(\varepsilon)$, then $\Pi$ admits a size-oblivious one-sided-error tester with query complexity
\(\widetilde O\!\left( N(\varepsilon)^2+ \frac{(q(\varepsilon)+1)^2}{\eta(\varepsilon)^2} \right).\)

\end{enumerate}
\end{thm}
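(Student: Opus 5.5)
For part~(i), I will first use the Goldreich--Trevisan canonicalization recalled above. It yields a canonical one-sided tester with sample size $s=O(Q(c\varepsilon))$. Since $\Pi$ is hereditary, this tester rejects exactly when $G[S]\notin\Pi$. Fix $n\ge N:=Cs$ and an $n$-vertex graph $G$ that is $\varepsilon$-far from $\Pi$. For an ordered tuple $T\in V_{\neq}^{j}$ with $j\le s$, define the potential
\[
p(T)=\Pr\bigl[G[T\cup S']\in\Pi\bigr],
\]
where $S'$ is a uniformly random $(s-j)$-subset of $V\setminus T$. Then $p(\emptyset)\le 1/3$. If $|T|=s$ and $G[T]\in\Pi$, then $p(T)=1$. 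Moreover, averaging over the next sampled vertex gives $p(T)=\mathbb E_{v\notin T}\,p(T+v)$ exactly, because a uniform $(s-j)$-set is produced by picking a uniform first element and then a uniform remainder.

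With $\gamma=1/s$, I define
\[
C(T)=\{v\notin T:\ p(T+v)\ge(1+\gamma)p(T)\}\cup T .
\]
By Markov's inequality applied to the averaging identity, $|C(T)|\le j+(n-j)/(1+\gamma)\le(1-\eta)n$ with $\eta=\Omega(1/s)$, using $n\ge Cs$. This requires $p(T)>0$. Note that $T\subseteq U$ with $G[U]\in\Pi$ does not by itself force $p(T)>0$, since $S'$ may leave $U$. However, $p(T)\ge\Pr[S'\subseteq U\setminus T]>0$ whenever $|U|\ge s$, because every subset of $U$ induces a graph in $\Pi$. The case $|U|<s$ is handled trivially: the fingerprint is $U$ itself in some fixed order and the container is its set of entries, of size $<s\le(1-\eta)n$.

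The fingerprint of a set $U$ with $G[U]\in\Pi$ and $|U|\ge s$ is built greedily, starting from $T=\emptyset$. If $U\subseteq C(T)$, stop and output $f(U)=T$. Otherwise, append some (canonically chosen) $x\in U\setminus C(T)$, so that $p(T+x)<(1+\gamma)p(T)$. This process must stop within $s$ steps. If it did not, we would reach $T\subseteq U$ with $|T|=s$ and hence $p(T)=1$, while $p(T)<(1+\gamma)^s p(\emptyset)<e/3<1$, a contradiction. This gives $q=O(s)$, $\eta=\Omega(1/s)$ and $N=O(s)$, as claimed. The only care needed is making the map $C$ depend on the tuple $T$ alone (it does) and checking the boundary bookkeeping. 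I expect the main subtlety to be choosing a potential that increases along $U$ and yet yields a small container; the multiplicative threshold $1+1/s$, played against the gap between $1/3$ and $1$, is what makes this work.

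For part~(ii), the tester samples $m=\max\{N,s\}$ vertices, with $s=\Theta\bigl(\frac{q+1}{\eta}\log\frac{q+1}{\eta}\bigr)$. If $m$ exceeds the input size, it inspects the whole graph. It rejects iff the sampled induced graph is not in $\Pi$. One-sidedness is immediate from heredity. If $n<m$, the tester decides membership exactly. If $n\ge m\ge N$ and $G$ is $\varepsilon$-far, view the sample as an ordered sequence $v_1,\dots,v_m$. If $G[S]\in\Pi$, then the fingerprint $f(S)$ is a tuple of at most $q$ positions of the sequence, and all remaining sampled vertices lie in $C(f(S))$.

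I will union bound over the at most $\sum_{j\le q} m^j\le (m+1)^q$ choices of positions. For a fixed choice of positions, the container is determined by the vertices at those positions. The other $m-q$ vertices are nearly independent of them (sampling without replacement, with a negligible correction since $n\ge m$). Each of these other vertices therefore lands in the container with probability at most about $1-\eta$. The failure probability is thus at most $(m+1)^q(1-\eta)^{m-q}\le 1/3$ for our choice of $s$. The query count is $O(m^2)=\widetilde O\bigl(N^2+(q+1)^2/\eta^2\bigr)$. The only delicate point in this direction is the dependence caused by sampling without replacement, which I would handle by conditioning on the fingerprint vertices and using $n\ge N$.
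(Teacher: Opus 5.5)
Your proposal is correct. Part (ii) is essentially the paper's counting argument. The paper sums $\Pr[F\subseteq T\subseteq C(F)]$ over ordered tuples $F$ and uses $(n)_j/\binom nt=(t)_j/\binom{n-j}{t-j}$, which is exactly your union bound over fingerprint positions.

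The dependence you worry about needs no approximation. Given that the chosen positions carry $F$, the other $m-j$ sampled vertices form a uniform subset of $V\setminus F$. Since $F\subseteq C(F)$, the probability that they all lie in $C(F)$ is $\binom{|C(F)|-j}{m-j}/\binom{n-j}{m-j}\le(1-\eta)^{m-j}$. If $N$ dominates, note that $(m+1)^q(1-\eta)^{m-q}$ is decreasing once $m+1>q/\eta$. Alternatively, as the paper does, subsample an $s$-set inside the $m$-set and use heredity.

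Part (i) takes a genuinely different route. The paper works through successive links. At each step it picks a maximum-degree vertex of $U$ and discards all vertices of strictly larger degree. It then does a case analysis: either the link density drops at some first level, forcing many discarded high-degree vertices, or the density survives to the $1$-uniform level.

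Your $p(T)$ is one minus the normalized codegree of $T$ in the obstruction hypergraph. The exact averaging identity plus Markov replaces the degree case analysis. The gap between $p(\emptyset)\le 1/3$ and $p=1$ at level $s$ caps the number of steps.

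Your argument is equally general. For edge density $\delta$ take $\gamma=\delta/s$; since $(1+\delta/s)^s(1-\delta)\le e^{\delta}(1-\delta)<1$, you get the same $\Theta(\delta/s)$ shrinkage. Also, $C(T)$ depends only on the set of entries of $T$, so unordered fingerprints would suffice.

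What you pay is twofold. First, your shrinkage is $(n-j)/(s+1)$ rather than $n/s$, so you need $N=Cs$ instead of the paper's $N=s$. Second, you need $p(T)>0$, which forces a separate treatment of $|U|<s$.

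That separate treatment needs one fix. The increasing ordering of a small set may coincide with a greedy fingerprint of a large set, so ``container $=$ set of entries'' conflicts with your formula for $C(T)$. Define $C(T)$ by your formula when $p(T)>0$, and as the entry set of $T$ when $p(T)=0$. Since $T\subseteq C(T)$ in both cases, small sets are still covered and every container has size at most $(1-\eta)n$. The paper avoids this collision because its main-case fingerprints have length exactly $s-1$, while sets of size at most $s-2$ get shorter ones.
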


Here and throughout, $\widetilde O(\cdot)$ and $\widetilde\Omega(\cdot)$ hide polylogarithmic factors. The theorem shows that one-sided testability and ordered-container parameters are quantitatively equivalent up to explicit polynomial transformations.

The main ingredient in the proof is a removal--container equivalence. Given an $n$-vertex graph $G$ and an integer $s\geq1$, we form the $s$-uniform hypergraph $H_{G,\Pi}$ on $V(G)$ with
\[
E(H_{G,\Pi})
=
\left\{
S\in\binom{V(G)}{s}:G[S]\notin\Pi
\right\}.
\]
Since $\Pi$ is hereditary, every set $U\subseteq V(G)$ satisfying $G[U]\in\Pi$ is an independent set in $H_{G,\Pi}$.

If a canonical tester sampling $s$ vertices rejects every graph that is $\varepsilon$-far from $\Pi$ with probability at least $2/3$, then $e(H_{G,\Pi})\geq \frac23\binom{n}{s}$. The required containers therefore follow from a purely hypergraph-theoretic statement.

\begin{thm}[Dense ordered containers]
\label{thm:dense-ordered-containers-intro}
Let $H$ be an $s$-uniform hypergraph on an $n$-element vertex set $V$, and suppose that $e(H)\geq\delta\binom{n}{s}$. Then there exist maps
\(f:\cI(H)\longrightarrow\bigcup_{j=0}^{s-1}V_{\neq}^{j}, \qquad C:\operatorname{im}(f)\longrightarrow \binom{V}{\leq(1-\delta/s)n},\)
such that every entry of $f(I)$ belongs to $I$ and $I\subseteq C(f(I))$ for every independent set $I\in\cI(H)$.
\end{thm}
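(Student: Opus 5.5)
We build the fingerprint greedily through a sequence of link hypergraphs, stopping as soon as $I$ avoids a large set of vertices of high link degree.

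\emph{Setup.} Fix once and for all a linear order on $V$. For a tuple $\mathbf v=(v_1,\dots,v_j)\in V_{\neq}^{j}$ with $j\le s-1$, let $H_{\mathbf v}$ be the $(s-j)$-uniform link hypergraph on $V\setminus\{v_1,\dots,v_j\}$, whose edges are the sets $T$ with $T\cup\{v_1,\dots,v_j\}\in E(H)$. Measure its density by
\[
d(\mathbf v)=e(H_{\mathbf v})\Big/\tbinom{n}{s-j},
\]
always normalising by $n$ rather than by $n-j$; we return to this choice below. The basic observation is the following. If $I$ is independent in $H$ and $\{v_1,\dots,v_j\}\subseteq I$, then $I\setminus\{v_1,\dots,v_j\}$ contains no edge of $H_{\mathbf v}$.

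\emph{One step.} Suppose we are at $\mathbf v$ with $r=s-j\ge 2$ and current density $d=d(\mathbf v)$. For a vertex $u\notin\mathbf v$, write $d_{\mathbf v}(u)=\deg_{H_{\mathbf v}}(u)/\binom{n-1}{r-1}$. Double counting gives $\sum_u \deg(u)=r\,e(H_{\mathbf v})$, and the identity $r\binom nr=n\binom{n-1}{r-1}$ then shows that the average of $d_{\mathbf v}(u)$ over $u\in V$ is exactly $d$. Set
\[
D_{\mathbf v}=\{u\notin\mathbf v:\ d_{\mathbf v}(u)\ge d-\delta/s\}.
\]
Since each $d_{\mathbf v}(u)\le 1$, we get $nd\le |D_{\mathbf v}|+n(d-\delta/s)$, and hence $|D_{\mathbf v}|\ge\delta n/s$. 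Now branch on $I$.
\begin{enumerate}[label=\textnormal{(\alph*)}]
\item If $I\cap D_{\mathbf v}=\emptyset$, stop. Set $f(I)=\mathbf v$ and $C(\mathbf v)=V\setminus D_{\mathbf v}$. This set contains $I$ and has size at most $(1-\delta/s)n$. It depends only on $\mathbf v$, because $D_{\mathbf v}$ is computed from $H$ and $\mathbf v$ alone.
\item Otherwise, let $u$ be the least element of $I\cap D_{\mathbf v}$ and extend to $\mathbf v u$. The new density is $d(\mathbf v u)=\deg_{H_{\mathbf v}}(u)/\binom{n}{r-1}$.
\end{enumerate}

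\emph{Final step and well-definedness.} If the procedure reaches $j=s-1$, then $H_{\mathbf v}$ is $1$-uniform, i.e.\ a set $B_{\mathbf v}$ of vertices. By the basic observation, $I$ is disjoint from $B_{\mathbf v}$, so we take $f(I)=\mathbf v$ and $C(\mathbf v)=V\setminus B_{\mathbf v}$. Every entry of $f(I)$ lies in $I$ by construction, and the fingerprint has length at most $s-1$. Since each container is a function of the fingerprint alone, $C$ is a well-defined map on $\operatorname{im}(f)$.

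\emph{Main obstacle: density bookkeeping.} It remains to check that the density loses at most $\delta/s$ per step. Then after $s-1$ steps it is at least $\delta-(s-1)\delta/s=\delta/s$, which gives $|B_{\mathbf v}|\ge \delta n/s$ and the required container size. The difficulty is that, along step (b), $\deg_{H_{\mathbf v}}(u)$ is normalised by $\binom{n-1}{r-1}$ in the threshold but by $\binom{n}{r-1}$ in the next density. These differ by the factor $(n-r+1)/n$, so the guarantee is really
\[
d(\mathbf v u)\ge \tfrac{n-r+1}{n}\,(d-\delta/s),
\]
which is slightly weaker than $d-\delta/s$. My first attempt would be to absorb this loss by working throughout with the counts $e(H_{\mathbf v})$ themselves. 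Indeed, every link lives inside $V$, and the identity $\sum_u\deg(u)=r\,e$ holds exactly, so one can try to choose thresholds of the form
\[
\deg_{H_{\mathbf v}}(u)\ge \bigl(\delta-(j+1)\delta/s\bigr)\tbinom{n}{r-1}
\]
and verify directly that at least $\delta n/s$ vertices meet them. The verification would use $\deg\le\binom{n}{r-1}$ together with the inductive bound $e(H_{\mathbf v})\ge(\delta-j\delta/s)\binom{n}{r}$. If the exact constant still slips, a fallback is to set the per-step loss to $\delta/(2s)$ and absorb the lower-order $O(s/n)$ error into the $\delta/s$ margin for $n$ large, handling small $n$ separately. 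This normalisation accounting is the only delicate point; the combinatorial mechanism itself (choose the first high-degree vertex of $I$, pass to its link, otherwise exclude all high-degree vertices) is straightforward.
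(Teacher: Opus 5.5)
There is a genuine gap, and it is the step you flagged yourself. It cannot be patched inside an additive scheme, because such a scheme has no slack. With threshold $d-\lambda$, averaging only guarantees $|D_{\mathbf v}|\ge\lambda n$, so an intermediate stop needs $\lambda\ge\delta/s$. The final step, however, needs $\delta-(s-1)\lambda\ge\delta/s$, that is $\lambda\le\delta/s$. Any normalization loss therefore breaks the exact bound, and your procedure really does fail.

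\emph{Counterexample to your scheme.} Take $s=2$, $n=5$, $\delta=1/2$, and let $H$ be the graph with edges $12,13,23,34,45$, labelled so that $5$ is least in your order. Then $d=1/2$, and the condition $d_{()}(u)\ge 1/4$ means $\deg(u)\ge 1$, so $D_{()}=V$. The independent set $I=\{1,5\}$ gets $f(I)=(5)$ and $C((5))=V\setminus\{4\}$, which has size $4>(1-1/4)\cdot 5$.

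\emph{Counterexample to the count-based variant.} Double counting with $\deg\le\binom{n}{r-1}$ only yields $|D_{\mathbf v}|\ge\frac{\delta}{s}\bigl(n-r(r-1)\bigr)$. Consider the $3$-uniform hypergraph on $\{x,a_1,\dots,a_6\}$ formed by the $15$ triples through $x$ together with $\{a_1,a_2,a_3\}$, so $\delta=16/35$. The threshold $\frac{2\delta}{3}\binom72=6.4$ is met by $x$ alone, so $I=\emptyset$ receives a container of size $6>7-16/15$.

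\emph{The fallback.} A loss of $\delta/(2s)$ per step guarantees only $|D_{\mathbf v}|\ge\delta n/(2s)$ at an intermediate stop. So it proves the statement with $\delta/(2s)$ in place of $\delta/s$, not the statement itself.

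The missing idea is to make the threshold multiplicative and to track a scale-invariant quantity, namely edges per maximum possible degree. Suppose $H_{\mathbf v}$ is $r$-uniform on $n_r=n-s+r$ vertices with $m$ edges. Set $\mu(\mathbf v)=m/\binom{n_r-1}{r-1}$, and let $D_{\mathbf v}$ consist of the vertices of degree at least $\frac{r-1}{n_r-1}m$.
\begin{itemize}
\item For $n_r>r$ the average degree $rm/n_r$ exceeds this threshold, so $D_{\mathbf v}\neq\emptyset$. Then $rm\le|D_{\mathbf v}|\binom{n_r-1}{r-1}+(n_r-1)\frac{r-1}{n_r-1}m$, which gives $|D_{\mathbf v}|\ge\mu(\mathbf v)$.
\item For $u\in D_{\mathbf v}$, the identity $\frac{r-1}{n_r-1}\binom{n_r-1}{r-1}=\binom{n_r-2}{r-2}$ gives $\mu(\mathbf v u)=\deg(u)/\binom{n_r-2}{r-2}\ge\mu(\mathbf v)$.
\item Initially $\mu(())\ge\delta\binom ns/\binom{n-1}{s-1}=\delta n/s$.
\end{itemize}
So nothing is ever lost. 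Every stop excludes at least $\delta n/s$ vertices, and at $r=1$ one has $|B_{\mathbf v}|=\mu(\mathbf v)\ge\delta n/s$. The factor $1/s$ comes entirely from the initial ratio $\binom ns/\binom{n-1}{s-1}$, not from $s-1$ additive losses.

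With this normalization your ``first vertex of $I$ above threshold'' mechanism goes through; the case $n=s$, where $H=\{V\}$, is immediate. The paper's proof uses the same invariant but a different pivot rule. It takes the maximum-degree vertex of $I$ and excludes all vertices of strictly larger degree. It invokes the threshold $\frac{r-1}{n_r-1}m$ only in the analysis, at the first level where the pivot's degree falls below it.
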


The proof is an elementary iterative pivot argument. Starting from an independent set $I$, we choose a vertex $v\in I$ of maximum degree and record it in the fingerprint. We then pass to the link of $v$, namely the hypergraph whose edges are the sets $e\setminus{v}$ over all hyperedges $e$ containing $v$. This reduces the uniformity by one.
The maximality of the degree of $v$ allows us to exclude from the eventual container every vertex whose degree is larger than that of $v$, since none of these vertices can belong to $I$. We repeat the procedure in the successive links. If, at some stage, there are many vertices of degree larger than the chosen pivot, then a positive proportion of the ambient vertices is excluded and the desired shrinkage follows. Otherwise, the chosen pivot has degree comparable to the average degree, so a definite proportion of the edge density survives in the next link. Iterating this alternative, either sufficient shrinkage occurs at an intermediate stage, or enough density remains when the process reaches a $1$-uniform hypergraph. In the latter case, its edges are simply vertices that cannot belong to $I$, and deleting them gives the required container.

The converse direction is a counting argument. Suppose that every independent set is contained in a set of size at most $(1-\eta)n$ determined by a fingerprint of length at most $q$. If $T$ is a uniformly random $t$-subset of the vertex set and $T$ is independent, then $T$ contains its fingerprint and all its remaining vertices lie in the corresponding container. Consequently,
\[
\Pr[T\text{ is independent}]
\leq
\sum_{j=0}^{q}(t)_j
\frac{\binom{\lfloor(1-\eta)n\rfloor-j}{t-j}}
{\binom{n-j}{t-j}}
\leq
\sum_{j=0}^{q}t^j(1-\eta)^{t-j}.
\]
The last expression is at most $1/3$ once
$t=O\!\left(\frac{q+1}{\eta}\log\left(\frac{q+1}{\eta}+2\right)\right)$.
Thus an ordered container lemma yields a removal statement and hence a one-sided tester.

In this way, our proof factors through the equivalence between one-sided testing, induced removal, and ordered containers.

\subsection{Arbitrary graph properties}

The passage from hereditary properties to arbitrary ones is achieved through the notion of semi-hereditariness introduced by Alon and Shapira.

\begin{defn}[Semi-hereditary graph property]
A graph property $\Pi$ is \textit{semi-hereditary} if there exist a hereditary graph property $\cH\supseteq\Pi$ and a function $M:(0,1)\to\N$ such that, for every $\varepsilon>0$, every graph $G\in\cH$ with at least $M(\varepsilon)$ vertices is $\varepsilon$-close to $\Pi$. We call $(\cH,M)$ a \textit{semi-hereditary witness} for $\Pi$.
\end{defn}

Starting from a one-sided tester for $\Pi$, the argument of Alon and Shapira constructs a hereditary envelope $\cH$ by forbidding the finite induced graphs on which the tester can reject. The same tester also tests $\cH$, and every sufficiently large member of $\cH$ must be close to $\Pi$. Conversely, if $\Pi$ is semi-hereditary with witness $(\cH,M)$, then testing $\Pi$ reduces to testing $\cH$, together with exact inspection below the threshold $M$.

Combining this reduction with Theorem~\ref{thm:hereditary-characterization} gives the following.

\begin{cor}[Quantitative extension to arbitrary graph properties]
\label{cor:full-characterization}
Let $\Pi$ be a graph property, and let $Q:(0,1)\to\N$. Then the following hold.
\begin{enumerate}[label=\textnormal{(\roman*)}]
\item If $\Pi$ admits a size-oblivious one-sided-error tester with query complexity at most $Q(\varepsilon)$, then $\Pi$ has a semi-hereditary witness $(\cH,M)$ satisfying $M(\varepsilon)=O(Q(c\varepsilon))$ for an absolute constant $c>0$, and $\cH$ admits ordered-container parameters satisfying
\[
q(\varepsilon)=O(Q(c\varepsilon)),\qquad
\eta(\varepsilon)=\Omega(Q(c\varepsilon)^{-1}),
\qquad
N(\varepsilon)=O(Q(c\varepsilon)).
\]

\item Conversely, if $\Pi$ has a semi-hereditary witness $(\cH,M)$ and $\cH$ admits ordered-container parameters $(\eta,N,q)$, then $\Pi$ admits a size-oblivious one-sided-error tester with query complexity
\(O\!\left(M(\varepsilon/2)^2\right) + \widetilde O\!\left( N(\varepsilon/2)^2+ \frac{(q(\varepsilon/2)+1)^2} {\eta(\varepsilon/2)^2} \right).\)

\end{enumerate}
\end{cor}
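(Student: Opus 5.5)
We prove the two parts separately, combining the Alon--Shapira hereditary-envelope construction with Theorem~\ref{thm:hereditary-characterization}.

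\textbf{Part (i).} By Goldreich--Trevisan canonicalization we may assume, at the cost of replacing $\varepsilon$ by $c\varepsilon$, a canonical one-sided tester sampling $s(\varepsilon)=O(Q(c\varepsilon))$ vertices. Because the error is one-sided, its rejection types (isomorphism types of $G[S]$) never occur as induced subgraphs of sufficiently large members of $\Pi$ on which the tester samples properly. Following Alon--Shapira, we let $\cH$ be the family of graphs $G$ such that no induced subgraph of $G$ is a graph on which, for some $\varepsilon$, the tester rejects with positive probability. This $\cH$ is hereditary by construction.

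Three properties of $\cH$ need to be checked.
\begin{enumerate}[label=\textnormal{(\alph*)}]
\item \emph{$\Pi\subseteq\cH$.} If $G\in\Pi$ and $F$ is an induced subgraph of $G$ on which the tester could reject, then the tester rejects $G$ with positive probability, contradicting one-sidedness. One subtlety is that on small inputs the tester inspects the whole graph, so a rejection type is a graph of size exactly $s(\varepsilon)$ (or the whole input). I would restrict the forbidden family to graphs $F$ of order $s(\varepsilon)$ on which the canonical decision rejects. Then $F$ induced in $G\in\Pi$ with $|G|\ge s(\varepsilon)$ gives positive rejection probability, and if $|G|<s(\varepsilon)$, $F$ cannot be induced in $G$.
\item \emph{The witness $M$.} If $G\in\cH$ has $n\ge s(\varepsilon)$ vertices, the tester run with parameter $\varepsilon$ never rejects $G$. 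Since $G$ is therefore not rejected with probability $\ge 2/3$, it is $\varepsilon$-close to $\Pi$. So $M(\varepsilon)=s(\varepsilon)=O(Q(c\varepsilon))$.
\item \emph{A tester for $\cH$.} The same tester is a one-sided tester for $\cH$. Members of $\cH$ are never rejected. If $G$ is $\varepsilon$-far from $\cH\supseteq\Pi$, then $G$ is $\varepsilon$-far from $\Pi$, hence rejected with probability $\ge2/3$.
\end{enumerate}
With (c) in hand, Theorem~\ref{thm:hereditary-characterization}(i) gives the container parameters for $\cH$, after absorbing constants into $c$.

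\textbf{Part (ii).} By Theorem~\ref{thm:hereditary-characterization}(ii), $\cH$ has a one-sided canonical tester $T_{\cH}$ at proximity $\varepsilon/2$, sampling $t$ vertices. The tester for $\Pi$ samples $\max(M(\varepsilon/2),t)$ vertices; if this exceeds $n$, it sees the whole graph and decides membership in $\Pi$ exactly. Otherwise it runs $T_{\cH}$ (accepting iff the sample induces a member of $\cH$). The cost is $O(M(\varepsilon/2)^2)$ queries plus the query count of $T_{\cH}$, which is the stated bound.

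\emph{Completeness.} If $G\in\Pi\subseteq\cH$, every induced sample lies in $\cH$, so $G$ is always accepted.

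\emph{Soundness.} Suppose $G$ is $\varepsilon$-far from $\Pi$ and $n\ge M(\varepsilon/2)$. If $G$ were $\varepsilon/2$-close to some $G'\in\cH$, then $|G'|\ge M(\varepsilon/2)$ makes $G'$ $\varepsilon/2$-close to $\Pi$, so $G$ would be $\varepsilon$-close to $\Pi$ by the triangle inequality, a contradiction. Hence $G$ is $\varepsilon/2$-far from $\cH$ and is rejected with probability $\ge2/3$.

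\textbf{Main obstacle.} Size-obliviousness is the delicate point. The tester does not know $n$, so the "sample larger than input means inspect everything" convention must handle both the regime $n<M$, where exact decision is needed, and the definition of $\cH$ in part (i), so that small graphs in $\Pi$ are not excluded. I would handle this, as above, by fixing the sample size first and taking the forbidden family to consist only of full-size rejected samples.
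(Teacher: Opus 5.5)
Your proposal is correct and follows the paper's proof essentially step for step. In part (i) you canonicalize, take $\cH$ to forbid the rejection types of the canonical testers, reuse the same tester for $\cH$, and invoke Theorem~\ref{thm:hereditary-characterization}(i); in part (ii) you combine exact inspection below $M(\varepsilon/2)$ with the $\cH$-tester at proximity $\varepsilon/2$ and the triangle inequality.

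Restricting the forbidden family to $s(\varepsilon)$-vertex rejection types is a worthwhile clarification that the paper leaves implicit. One loose end remains, and the paper shares it. On inputs with fewer than $s(\varepsilon)$ vertices, $T_\varepsilon$ decides membership in $\Pi$ rather than in $\cH$, so it can reject members of $\cH\setminus\Pi$. This is harmless if you feed the removal statement for $n\ge s(\varepsilon)$, with sample size $s(\varepsilon)$, directly into Theorem~\ref{thm:removal-container-equivalence}(1). Doing so also avoids re-canonicalizing a tester that makes $\Theta(s(\varepsilon)^2)$ queries, which would square the bounds rather than merely change constants.
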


Thus the quantitative part of the problem is carried by ordered containers for the hereditary envelope, while semi-hereditariness describes precisely how an arbitrary graph property reduces to that setting.

\subsection{Beyond graphs}

Although graphs are the main setting of the paper and the source of the motivating question, the combinatorial argument above is not graph-specific. Once local obstructions are encoded as the edges of a uniform hypergraph, the proof no longer uses symmetry, irreflexivity, or the fact that the ambient relation has arity two.

The qualitative theory beyond graphs is already broad. Rödl and Schacht \cite{RodlSchacht} extended hereditary testability to uniform hypergraphs by means of hypergraph regularity and removal. Austin and Tao \cite{AustinTao} developed a general framework for multiple directed, coloured, and non-uniform hypergraphs, allowing loops and relations of different arities, and studied the stronger notion of local repairability. Alon, Ben-Eliezer, and Fischer \cite{AlonBenEliezerFischer} obtained related results for edge-coloured ordered graphs and for two-dimensional matrices over finite alphabets.

Our argument gives a common quantitative and regularity-free treatment of these settings. We formulate this in the language of finite relational structures.

\begin{defn}[Finite relational signatures and structures]
\label{def:finite-relational-signature}
 A \textit{finite relational signature} is a finite list $\tau=\{R_1,\dots,R_m\}$ of relation symbols together with a positive integer
$a(R_i)$, called the \textit{arity} of $R_i$, for each $i$.  A finite
$\tau$-structure $\cA$ consists of a finite vertex set $V(\cA)$ and an
interpretation
\(R_i^{\cA}\subseteq V(\cA)^{a(R_i)}\)
of each symbol $R_i$.  We write
$r(\tau)=\max_{R\in\tau}a(R)$ for the maximum arity.
\end{defn}

A signature specifies only the names of the relations and the number of inputs each relation takes. It does not impose any further conditions on those relations. Conditions such as symmetry, the absence of loops, or the requirement that every pair receive exactly one colour are instead included in the definition of the property under consideration.
For example, a simple graph is represented by one binary relation $E$, with the additional requirements that $E$ be symmetric and irreflexive. A directed graph is represented by one binary relation with no symmetry requirement. An edge-coloured graph is represented by one binary relation for each colour, together with the requirement that these relations partition the pairs of distinct vertices. Finally, a $k$-uniform hypergraph is represented by one $k$-ary relation, required to be invariant under permutations of its arguments and to contain only tuples of distinct vertices.
In this way, the same formalism treats directed, coloured, uniform, and non-uniform structures within a single framework.

For $U\subseteq V(\cA)$, the \textit{induced substructure} $\cA[U]$ is obtained
by restricting every relation to tuples all of whose entries belong to $U$.

\begin{defn}[Relational properties]
A $\tau$-property is a family of finite $\tau$-structures closed under isomorphism. It is \textit{hereditary} if $\cA\in\cP$ implies $\cA[U]\in\cP$ for every $U\subseteq V(\cA)$. It is \textit{extendable} if, for every $\cA\in\cP$ and every $m\geq0$, there is a structure $\cB\in\cP$ on $|V(\cA)|+m$ vertices containing $\cA$ as an induced substructure.
\end{defn}

If $\cA$ and $\cB$ are $\tau$-structures on the same $n$-element vertex set, we define
\(d_\tau(\cA,\cB) = \frac{1}{|\tau|} \sum_{R\in\tau} \frac{|R^{\cA}\triangle R^{\cB}|}{n^{a(R)}}.\)
Distance from a property and the notions of being $\varepsilon$-far or $\varepsilon$-close are defined in the usual way.

The intrinsic testing notion is again induced sampling.

\begin{defn}[Oblivious induced-sampling tester]
Let $\cP$ be a $\tau$-property. An \textit{oblivious induced-sampling tester} for $\cP$ consists, for every $\varepsilon>0$, of an integer $s(\varepsilon)$ and a collection $\cR_\varepsilon$ of isomorphism types of $\tau$-structures on $s(\varepsilon)$ vertices.

On an input $\cA$ with $|V(\cA)|\geq s(\varepsilon)$, the tester chooses a uniformly random set $S\in\binom{V(\cA)}{s(\varepsilon)}$, reveals the complete induced structure $\cA[S]$, and rejects if and only if $\cA[S]\in\cR_\varepsilon$. If $|V(\cA)|<s(\varepsilon)$, it may inspect the whole input.

The tester has one-sided error if it always accepts members of $\cP$, and is sound if it rejects every $\varepsilon$-far input with probability at least $2/3$. Its sample complexity is $s(\varepsilon)$.
\end{defn}

For hereditary properties, the tester may reject exactly when the sampled induced structure does not belong to the property.

\begin{defn}[Ordered containers for relational properties]
Let $\cP$ be a hereditary $\tau$-property. For $\varepsilon,\eta>0$ and integers $N,q\geq0$, we say that $\cP$ admits an
$(\varepsilon,\eta,N,q)$-\textit{ordered container lemma} if, for every $n\geq N$ and every $n$-vertex $\tau$-structure $\cA$ that is $\varepsilon$-far from $\cP$, there exist maps
\[
f:\{U\subseteq V(\cA):\cA[U]\in\cP\}
\longrightarrow
\bigcup_{j=0}^{q}V(\cA)_{\neq}^{j},
\qquad
C:\operatorname{im}(f)
\longrightarrow
\binom{V(\cA)}{\leq(1-\eta)n},
\]
such that every entry of $f(U)$ belongs to $U$ and $U\subseteq C(f(U))$.
\end{defn}

\begin{thm}[Container characterization for hereditary relational structures]
\label{thm:relational-hereditary-characterization-intro}
Let $\cP$ be a hereditary property of finite $\tau$-structures.
\begin{enumerate}[label=\textnormal{(\roman*)}]
\item If $\cP$ has a one-sided induced-sampling tester with sample complexity $s(\varepsilon)$, then it admits ordered-container parameters
\(N(\varepsilon)=s(\varepsilon)\), \(q(\varepsilon)=s(\varepsilon)-1\), and \(\eta(\varepsilon)={2}/{(3s(\varepsilon))}\).

\item Conversely, if $\cP$ admits ordered-container parameters $(\eta,N,q)$, then it has a one-sided induced-sampling tester with sample complexity
\[
O\!\left(
\max\left\{
N(\varepsilon),
\frac{q(\varepsilon)+1}{\eta(\varepsilon)}
\log\left(
\frac{q(\varepsilon)+1}{\eta(\varepsilon)}+2
\right)
\right\}
\right).
\]

\end{enumerate}
\end{thm}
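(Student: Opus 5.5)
For part (i), the plan is to reduce to Theorem~\ref{thm:dense-ordered-containers-intro}. Fix $\varepsilon>0$ and write $s=s(\varepsilon)$. Take $n\geq N(\varepsilon)=s$ and an $n$-vertex $\tau$-structure $\cA$ that is $\varepsilon$-far from $\cP$. Form the $s$-uniform hypergraph $H$ on $V(\cA)$ whose edges are the sets $S\in\binom{V(\cA)}{s}$ with $\cA[S]\notin\cP$.

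First, the tester's rejection set can be taken to be exactly the non-members of $\cP$. One-sidedness gives this: a type in $\cR_\varepsilon$ that belonged to $\cP$ would be rejected on an input lying in $\cP$. So $\cR_\varepsilon$ consists of non-members, and enlarging it to all non-members keeps the tester one-sided and sound. Soundness then gives $\Pr_S[\cA[S]\notin\cP]\geq 2/3$, that is, $e(H)\geq\frac23\binom ns$.

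Second, hereditariness means every $U$ with $\cA[U]\in\cP$ contains no edge of $H$, so $U\in\cI(H)$. Applying Theorem~\ref{thm:dense-ordered-containers-intro} with $\delta=2/3$ yields fingerprints of length at most $s-1$, each of whose entries lies in $U$, and containers of size at most $(1-\frac{2}{3s})n$. Restricting $f$ to these sets $U$ and $C$ to the image of the restriction gives the claimed parameters. If $s<$ the uniformity needs care (e.g.\ $s=0$ or $n<s$), this is excluded by $n\geq N=s$.

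For part (ii), I will follow the counting argument from the introduction. Set
\[
t=\max\Bigl\{N,\ \Bigl\lceil K\tfrac{q+1}{\eta}\log\bigl(\tfrac{q+1}{\eta}+2\bigr)\Bigr\rceil\Bigr\}
\]
for a large absolute constant $K$. The tester samples a $t$-set $T$ and rejects iff $\cA[T]\notin\cP$; if $|V(\cA)|<t$ it inspects the whole input and decides membership exactly. By hereditariness this tester is one-sided.

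For soundness, let $n\geq t\geq N$ and let $\cA$ be $\varepsilon$-far from $\cP$. If $\cA[T]\in\cP$, then $T$ is determined to lie in $C(f(T))$, and its fingerprint is an ordered tuple of $j\leq q$ distinct elements of $T$. A union bound over $j$ and over fingerprint tuples $\sigma$ gives
\[
\Pr[\cA[T]\in\cP]\leq\sum_j\sum_{\sigma}\Pr[\sigma\subseteq T,\ T\subseteq C(\sigma)].
\]
I will bound $\Pr[\sigma\subseteq T]$ as $\binom{n-j}{t-j}/\binom nt$, and the conditional probability that the remaining $t-j$ elements fall in $C(\sigma)\setminus\sigma$ by $\binom{(1-\eta)n-j}{t-j}/\binom{n-j}{t-j}\leq(1-\eta)^{t-j}$. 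Summing over the at most $(n)_j$ tuples and using $(n)_j\binom{n-j}{t-j}/\binom nt=(t)_j$, the sum collapses to $\sum_{j\le q}t^j(1-\eta)^{t-j}$.

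The step I expect to need the most care is the final calculus estimate showing this sum is at most $1/3$. For $t\geq 2q$ the terms increase geometrically in $j$, so the sum is at most $2(q+1)t^q e^{-\eta t/2}$. This is at most $1/3$ once $\eta t/2\geq q\log t+\log(6(q+1))$. With $t=K\frac{q+1}{\eta}L$, where $L=\log(\frac{q+1}{\eta}+2)$, we have $\log t\leq \log K+\log\frac{q+1}{\eta}+\log L=O(L)$. The left side $K(q+1)L/2$ then dominates for $K$ large, including the degenerate case $q=0$, which gives the stated sample complexity.
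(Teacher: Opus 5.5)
Your proposal is correct and follows essentially the paper's route. Part (i) matches the paper's argument: restrict the rejection rule to non-members as in Lemma~\ref{lem:relational-membership-rule}, form the obstruction hypergraph, and apply the dense ordered container lemma with $\delta=2/3$. Part (ii) is the union-bound count of Lemma~\ref{lem:counting-from-containers}, and you also supply the final numerical estimate that the paper leaves implicit.

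The only point to add is the case $t=N>K\frac{q+1}{\eta}L$. There you should note that $\eta t/2-q\log t$ is increasing for $t\geq 2q/\eta$, so the estimate persists for larger $t$. Alternatively, as in the paper's graph proof, subsample a smaller set and use heredity.
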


Revealing the complete induced structure on $t$ vertices requires at most $\sum_{R\in\tau}t^{a(R)}\leq|\tau|t^r$ relation queries. Thus the theorem also gives a characterization in terms of relation-query complexity.

\begin{defn}[Semi-hereditary relational property]
A $\tau$-property $\cP$ is \textit{semi-hereditary} if there exist a hereditary property $\cH\supseteq\cP$ and a function $M:(0,1)\to\N$ such that every $\cA\in\cH$ with $|V(\cA)|\geq M(\varepsilon)$ is $\varepsilon$-close to $\cP$. We call $(\cH,M)$ a \textit{semi-hereditary witness}.
\end{defn}

The same hereditary-envelope argument gives a quantitative characterization
of arbitrary relational properties.  For hereditary properties in a fixed
bounded-arity signature, polynomial induced-sampling complexity, polynomial
relation-query complexity, and polynomial ordered-container parameters are
equivalent.  For an arbitrary property, the corresponding statement also
requires polynomial control of the semi-hereditary threshold.

\subsection{Applications}

The characterization has three consequences that we develop in Part~III.

\subsection*{Partition properties.}
Let $\cP_1,\dots,\cP_k$ be properties of structures in a common binary
signature.  We write $\Part(\cP_1,\dots,\cP_k)$ for the property of admitting a
partition $V(\cA)=V_1\cup\cdots\cup V_k$ such that
$\cA[V_i]\in\cP_i$ for every $i$; tuples meeting distinct parts are
unrestricted.  We prove that if the $\cP_i$ are nonempty, hereditary,
extendable, and one-sided testable, then their partition property is also
one-sided testable, with an explicit quantitative bound.

\begin{thm}[Partition closure for binary signatures]
\label{thm:relational-partition-closure-intro}
Let every relation symbol in $\tau$ have arity two, and let
$\cP_1,\dots,\cP_k$ be nonempty hereditary and extendable $\tau$-properties.
Suppose that $\cP_i$ has one-sided induced-sampling complexity $S_i(\delta)$,
and put $S(\delta)=\max_{i\in[k]}S_i(\delta)$.  Then
$\Part(\cP_1,\dots,\cP_k)$ has a one-sided induced-sampling tester with sample
complexity
\(\widetilde O\!\left( \frac{k}{\varepsilon} S\!\left(\frac{\varepsilon}{8k}\right)^2 \right).\)
\end{thm}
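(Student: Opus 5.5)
The plan is to apply Theorem~\ref{thm:relational-hereditary-characterization-intro}(ii) to $\cP=\Part(\cP_1,\dots,\cP_k)$. First I would check that $\cP$ is hereditary: if $\cA[U]$ is witnessed by parts $U_1,\dots,U_k$, then for $U'\subseteq U$ the parts $U_i\cap U'$ work, because each $\cP_i$ is hereditary. It then suffices to build ordered containers for $\cP$ with $q=\widetilde O(kS^2/\varepsilon)$-type length and $\eta=\Omega(\varepsilon)$-type shrinkage. As a black box for each part I would use Theorem~\ref{thm:relational-hereditary-characterization-intro}(i) at proximity $\delta=\varepsilon/(8k)$. This gives every $\cP_i$ containers with fingerprint length at most $S-1$ and shrinkage $2/(3S)$, where $S=S(\varepsilon/(8k))$, on every $\delta$-far structure with at least $S$ vertices.

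\textbf{Container construction.} Fix $\cA$ that is $\varepsilon$-far from $\cP$, and $U$ with $\cA[U]\in\cP$ witnessed by $U_1,\dots,U_k$. I would run an iterative procedure that maintains sets $W_1,\dots,W_k$ with $U_i\subseteq W_i$, starting from $W_i=V$, and uses the potential $\Phi=\sum_i|W_i|\le kn$. Each round goes as follows.
\begin{enumerate}[label=\textnormal{(\alph*)}]
\item Form a canonical assignment $X_i\subseteq W_i$ of the vertices of $\bigcup_i W_i$, sending each vertex to the smallest index $i$ with $v\in W_i$.
\item If $|V\setminus\bigcup_i W_i|\ge \varepsilon n/4$, stop and output the container $\bigcup_i W_i$.
\item Otherwise use that $\cA$ is $\varepsilon$-far from $\cP$. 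Since the binary signature means cross-part tuples are unrestricted, we can repair each $\cA[X_i]$ separately. Extendability lets us place the fewer than $\varepsilon n/4$ uncovered vertices into some part at cost at most $\varepsilon n^2/4$ in total.
\item Hence some $i$ has $\cA[X_i]$ that is $\delta$-far from $\cP_i$ relative to $|X_i|^2$, and in particular $|X_i|=\Omega(\sqrt{\varepsilon/k}\,n)$. Here $X_i$ is the largest-cost part, chosen canonically.
\item Apply the $\cP_i$-container lemma to $\cA[X_i]$ and the independent set $U_i\cap X_i$. This yields at most $S-1$ fingerprint vertices from $U_i\subseteq U$ and a set $C\subseteq X_i$ with $|C|\le(1-2/(3S))|X_i|$.
\item Replace $W_i$ by $(W_i\setminus X_i)\cup C$, which still contains $U_i$.
\end{enumerate}
Each round lowers $\Phi$ by $\Omega(|X_i|/S)$. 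The procedure must stop before $\Phi$ drops below $n-\varepsilon n/4$, because $\Phi\ge|\bigcup_i W_i|$. The number of rounds is therefore bounded by $kn$ divided by the per-round drop. With the cost-averaging choice of $X_i$ done carefully, namely weighting by $|X_i|$, I expect this to be $O(kS/\varepsilon)$ rounds. That gives fingerprint length $q=O(kS^2/\varepsilon)$ and shrinkage $\eta=\varepsilon/4$, and then part (ii) gives $\widetilde O(q/\eta)$, roughly $\widetilde O(kS^2/\varepsilon)$ up to the $\varepsilon$ bookkeeping.

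\textbf{Main obstacle.} The fingerprint must consist of vertices of $U$ alone. The reconstruction map $C$, however, needs to know which index $i$ was processed in each round, and the boundaries between the sub-fingerprints, and a vertex of $U$ does not reveal its part. The definition does not allow extra labels. I would first try to bypass this by not invoking the container definition literally. Instead, I would redo the counting argument behind Theorem~\ref{thm:relational-hereditary-characterization-intro}(ii) directly for labelled fingerprints, where each fingerprint vertex carries a label in $[k]$ and a round-end marker. The labels inflate the sum to $\sum_j (2k)^j t^j(1-\eta)^{t-j}$. This still falls below $1/3$ once $t=\widetilde O(q/\eta)$, with only a $\log k$ loss absorbed into $\widetilde O$. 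The second delicate point is the density bookkeeping in the third and fourth steps. There one must show that some part $X_i$ is both large and relatively $\delta$-far, with the correct weighting so that the round count is linear rather than quadratic in $k/\varepsilon$. I expect the choice $\delta=\varepsilon/(8k)$ is designed to make this averaging work.
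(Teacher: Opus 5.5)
\textbf{Gap: the round count, and hence the claimed bound.} Your architecture is close to the paper's: per-part obstruction containers, iterating until the union of the part-containers misses a linear fraction, then counting with fingerprints augmented by round boundaries, essentially the paper's $\exp(\widetilde O(q))$ transcript count. The gap is in the number of rounds, which you leave as an expectation.

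Your additive potential $\Phi=\sum_i|W_i|\le kn$ drops by $\Omega(|X_i|/S)$ per round, with $|X_i|\ge\alpha n$. That only yields $O(kS/\alpha)=O(k^{3/2}S\varepsilon^{-1/2})$ rounds for $\alpha\approx\sqrt{\varepsilon/k}$. Even your hoped-for $O(kS/\varepsilon)$ rounds gives $q=O(kS^2/\varepsilon)$, so $q/\eta=\Theta(kS^2/\varepsilon^2)$ when $\eta=\Theta(\varepsilon)$. Either way you lose a polynomial factor in $1/\varepsilon$ (and possibly in $k$) against the claimed $\widetilde O(kS^2/\varepsilon)$, and polylogarithmic slack cannot absorb it. The theorem needs only $O(kS\log(k/\varepsilon))$ rounds, i.e.\ a number logarithmic in $1/\varepsilon$.

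\textbf{The missing idea: per-index multiplicative shrinkage.} The paper proves the farness claim for possibly \emph{overlapping} sets: if $|\bigcup_iC_i|>(1-\lambda)n$, then some $i$ has $|C_i|\ge\alpha n$ and $\cA[C_i]$ is $\gamma$-far from $\cP_i$. Disjoint $V_i\subseteq C_i$ are chosen only inside the editing argument, where heredity lets a repair of all of $\cA[C_i]$ be restricted to $V_i$. Each such repair costs at most $\gamma n^2$, hence $k\gamma n^2$ in total. This is exactly where $\gamma=\varepsilon/(8k)$ comes from: the $1/k$ pays for the overlaps.

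The dense container lemma is then applied to the whole current $C_i$, not to a disjoint piece. So $|C_i|$ shrinks by the factor $1-2/(3s)$ every time index $i$ is selected. Since $i$ is selected only while $|C_i|\ge\alpha n$, this happens $O(s\log(1/\alpha))$ times per index. That gives $q=O(ks^2\log(k/\varepsilon))$ and $t=\widetilde O(q/\lambda)$ with $\lambda=\varepsilon/12$.

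\textbf{Why your disjoint assignment cannot give this.} Shrinking $X_i$ to $C$ removes only $\Omega(|X_i|/S)$ vertices from $W_i$, which may be a tiny fraction of $|W_i|$. Moreover, discarded vertices lying in some $W_j$ with $j>i$ are reassigned to those later parts. So no individual $X_j$ or $W_j$ decreases multiplicatively, and no potential better than the additive one is available.

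\textbf{Minor point.} With your stopping threshold $\varepsilon n/4$, the cost of absorbing the leftover set $L$ is up to $2|L|/n+(|L|/n)^2\approx\varepsilon/2$, not $\varepsilon/4$. The constants can be repaired, and the paper uses $\lambda=\varepsilon/12$.
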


For graphs, this applies to $(r,s)$-colourability, where the vertex set is
partitioned into $r$ independent sets and $s$ cliques.  It includes ordinary
colourability and split graphs, and it gives a tester for bounded cochromatic
number, the minimum number of parts each inducing either a clique or an
independent set.  For digraphs it gives a tester for bounded dichromatic
number, the minimum number of acyclic parts.

\subsection*{Linearly large induced substructures.}
For $\rho\in(0,1]$ and a property $\cP$, let
\[
\mathrm{Large}_\rho(\cP)
=
\left\{
\cA:
\begin{array}{c}
\text{there exists }U\subseteq V(\cA)\text{ with }|U|\geq\rho|V(\cA)|\\[-2pt]
\text{and }\cA[U]\in\cP
\end{array}
\right\}.
\]
This property is generally not hereditary and need not be a bounded partition
property.  Nevertheless, iterating the ordered-container construction gives a
tester whenever $\cP$ is hereditary, extendable, and one-sided testable.

\begin{thm}[Testing for a large induced member]
\label{thm:relational-large-induced-intro}
Let $\tau$ be a finite binary signature, let $\cP$ be a nonempty hereditary and
extendable $\tau$-property, and let $S_{\cP}(\delta)$ denote its one-sided
induced-sampling complexity.  Fix $\rho\in(0,1]$ and
$0<\varepsilon\leq\rho^2/8$, and put $\delta=\varepsilon/(4\rho^2)$.  Then
$\mathrm{Large}_\rho(\cP)$ has a two-sided induced-sampling tester using
\(\widetilde O\!\left( \frac{\rho^2}{\varepsilon^2} S_{\cP}(\delta)^2 \log\frac2\rho \right)\)
vertices.
\end{thm}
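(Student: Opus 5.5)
The plan is to use the natural sample-and-search tester and to prove soundness by iterating the ordered containers from Theorem~\ref{thm:relational-hereditary-characterization-intro}(i) until they are smaller than $\rho n$, paying a union bound over all sequences of fingerprints.

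\emph{The tester.} Sample a uniformly random $S\subseteq V(\cA)$ of size $t$, reveal $\cA[S]$, and accept if and only if some $U\subseteq S$ satisfies $|U|\geq(\rho-\gamma/2)t$ and $\cA[U]\in\cP$. Here $\gamma=c\,\varepsilon/\rho$ for a small absolute constant $c$. Membership of $\cA[U]$ in $\cP$ is decided from the revealed structure, and query complexity is not an issue for an induced-sampling tester.

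\emph{Completeness.} If $\cA\in\mathrm{Large}_\rho(\cP)$, fix a witness $U_0$ with $|U_0|\geq\rho n$. The intersection $|S\cap U_0|$ is hypergeometric with mean at least $\rho t$, so by Chernoff it is at least $(\rho-\gamma/2)t$ except with probability $e^{-\Omega(\gamma^2t)}\leq 1/3$. Heredity gives $\cA[S\cap U_0]\in\cP$, so the tester accepts.

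\emph{Robust farness.} Soundness rests on the following claim. If $\cA$ is $\varepsilon$-far from $\mathrm{Large}_\rho(\cP)$, then every $W$ with $|W|\geq(\rho-\gamma)n$ has $\cA[W]$ $\delta$-far from $\cP$ (distance normalized by $|W|$). To prove it, suppose some such $W$ is $\delta$-close. Repair $\cA[W]$ into $\cP$ at cost at most $\delta|W|^2\leq\varepsilon n^2/4$ per relation. If $|W|<\rho n$, use extendability to adjoin $\lceil\rho n\rceil-|W|\leq\gamma n$ further vertices so that the induced structure lies in $\cP$. Only tuples meeting these vertices change, which costs $O(\gamma n^2)=O(\varepsilon n^2/\rho)$; this is where $c$ must be chosen small, and it is the delicate bookkeeping step. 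The total is at most $\varepsilon n^2$, so $\cA$ is $\varepsilon$-close to $\mathrm{Large}_\rho(\cP)$, a contradiction. (Binary arity is used so that the cost of the adjoined vertices is $O(\gamma n^2)$.)

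\emph{Iterated containers.} By Theorem~\ref{thm:relational-hereditary-characterization-intro}(i) applied with parameter $\delta$, every such $\cA[W]$ with $|W|\geq s:=S_\cP(\delta)$ has an ordered container lemma with $q=s-1$ and $\eta=2/(3s)$. Given $U\subseteq S$ with $\cA[U]\in\cP$, start from $W_0=V$. While $|W_i|\geq(\rho-\gamma)n$, apply the container lemma inside $\cA[W_i]$ to $U\cap W_i$, which lies in $\cP$ by heredity. This produces a fingerprint $f_i$ with entries in $U\subseteq S$ and a container $W_{i+1}\supseteq U$ with $|W_{i+1}|\leq(1-\eta)|W_i|$. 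The process stops after at most
\[
L=O\!\left(\eta^{-1}\log\tfrac{2}{\rho}\right)=O\!\left(s\log\tfrac2\rho\right)
\]
rounds, with a final container $W^*\supseteq U$ of size less than $(\rho-\gamma)n$. Moreover, $W^*$ is determined by the concatenated fingerprint, which is an ordered tuple of at most $qL$ vertices of $S$.

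\emph{Union bound and the main obstacle.} It remains to bound the probability that $S$ has more than $(\rho-\gamma/2)t$ vertices in $W^*$ for \emph{some} fingerprint sequence. The main obstacle is that the fingerprint depends on $S$. I would handle it by summing over the at most $(t+1)^{qL}$ choices of ordered sample positions for the fingerprint. For a fixed choice, condition on those at most $qL$ vertices. This fixes $W^*$, and the remaining $t-qL$ vertices are uniform among the rest. Since $|W^*|<(\rho-\gamma)n$, a Chernoff bound gives probability $e^{-\Omega(\gamma^2t)}$ that more than $(\rho-\gamma/2)t-qL$ of them land in $W^*$. This requires $qL\leq\gamma t/4$.

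So it suffices that $\gamma^2 t\gtrsim qL\log t$ and $t\gtrsim qL/\gamma$. Substituting $q\approx s$, $L=O(s\log(2/\rho))$ and $\gamma\asymp\varepsilon/\rho$ gives
\[
t=\widetilde O\!\left(\frac{\rho^2}{\varepsilon^2}S_\cP(\delta)^2\log\frac2\rho\right),
\]
as claimed. Small inputs, with $n$ below $t$ or below the threshold $N=s$ needed in each round, are handled by exact inspection.
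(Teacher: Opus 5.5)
Your architecture is the paper's: the same sample-and-search tester with threshold $(\rho-\gamma/2)t$, and completeness via a hypergeometric tail bound plus heredity. Soundness likewise goes through a local-farness lemma for all sets of size at least $(\rho-\gamma)n$, iteration of the dense ordered containers while the container has more than $(\rho-\gamma)n$ vertices, and a union bound over concatenated fingerprints followed by a hypergeometric tail bound. The union-bound part is fine. The gap is in the robust-farness step. There you measure edit costs at scale $n^2$ rather than at the scale $\rho^2n^2$ of a witnessing set of size $\lceil\rho n\rceil$, and as written both of your cases fail.

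\emph{Case $|W|\geq\rho n$.} You need this case, since the iteration starts at $W_0=V$. Here the inequality $\delta|W|^2\leq\varepsilon n^2/4$ is false. With $\delta=\varepsilon/(4\rho^2)$ it holds only when $|W|\leq\rho n$. For $W=V$ the repair may cost $\varepsilon n^2/(4\rho^2)$, which exceeds $\varepsilon n^2$ once $\rho<1/2$. The missing step, which the paper uses, is to keep only the edits inside a uniformly random $W'\subseteq W$ of size $\lceil\rho n\rceil$. By averaging, some such $W'$ needs at most $\delta\rho^2n^2+O_\tau(n)$ changes, and the repaired structure on $W'$ is $\cB[W']\in\cP$ by heredity. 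Hence $\cA$ is $\varepsilon$-close to $\mathrm{Large}_\rho(\cP)$.

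\emph{Case $|W|<\rho n$.} Your bound $O(\gamma n^2)=O(\varepsilon n^2/\rho)$ cannot be made at most $3\varepsilon n^2/4$ by any absolute constant $c$. Taken at face value it forces $\gamma\asymp\varepsilon$, and then your final count only gives $t=\widetilde O(s^2\log(2/\rho)/\varepsilon^2)$, a factor $\rho^{-2}$ worse than claimed. The correct count uses that $\mathrm{Large}_\rho(\cP)$ constrains only the induced structure on the padded set $W\cup X$ of size $\lceil\rho n\rceil$. Tuples from the new vertices $X$ to vertices outside this set are unrestricted. So, per relation, only the at most $2(\gamma n+1)\lceil\rho n\rceil$ tuples inside $W\cup X$ that meet $X$ must change, a normalized cost of about $2\gamma\rho=2c\varepsilon$. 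With $\gamma=\varepsilon/(16\rho)$ the total is at most $\varepsilon/4+3\gamma\rho+o(1)<\varepsilon$, which is exactly the paper's computation.

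With these two repairs the rest of your argument goes through and gives the stated bound.
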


In particular, we obtain a tester for the existence of an induced acyclic
subdigraph containing a prescribed positive proportion of the vertices.

\subsection*{Counting inside far hosts.}
Ordered containers also control all induced members of a hereditary property
inside a fixed far host.  If every induced $\Pi$-subgraph of an $n$-vertex
graph $G$ has a fingerprint of length at most $q$ and a container of size at
most $(1-\eta)n$, then
\(\bigl|\{U\subseteq V(G):G[U]\in\Pi\}\bigr| \leq (q+1)n^q2^{(1-\eta)n}.\)
Thus the family of vertex sets inducing members of $\Pi$ has a uniform entropy
deficit.  Combining this observation with known edit-distance results for
hereditary properties yields structural statements in random hosts; for
example, with high probability every perfect induced subgraph of $G(n,1/2)$ is
contained in one of polynomially many vertex sets of size at most
$(1-\eta)n$, for a constant $\eta>0$.  The edit-distance background and the
formal random-host consequences are deferred to Section~\ref{sec:far-host-applications}.

\subsection{Organization of the paper}

The paper is divided into three parts.  Part~I proves the graph
characterization: first for hereditary properties through the
removal--container equivalence, and then for arbitrary properties through
semi-hereditary envelopes.  Part~II develops the corresponding theory for
finite relational structures of bounded arity, including the induced-sampling
model and its canonicalization.  Part~III contains the applications: closure
under vertex partitions, properties defined by a linearly large induced
substructure, and counting and structural consequences inside far hosts.

\subsection*{Acknowledgements}
This project was carried out while the first two authors were participating in the 2025 NII International Internship Program.  The authors are grateful to the program for its support.  The first author is supported by the CB European PhD Studentship funded by Trinity College, Cambridge.

\part{Graph properties}

All graph-theoretic and testing terminology in this part is as fixed in the
introduction.

\section{The graph characterization}
\label{sec:graph-characterization}
We suppress floor and ceiling signs when they have no effect on the argument.
\subsection{Hereditary properties}

We begin by isolating the combinatorial heart of the argument, namely the equivalence between removal statements and ordered container statements. The forward implication constructs, from a graph that is far from the property, a shrinking container around every induced $\Pi$-subgraph by iteratively exposing high-degree vertices in an auxiliary hypergraph. The reverse implication is a counting argument: once every induced $\Pi$-subgraph is controlled by a short fingerprint and a substantially smaller container, a random induced subgraph has a positive probability of witnessing a violation of~$\Pi$.

\begin{defn}[Removal lemma for a hereditary graph property]
Let $\Pi$ be a hereditary graph property. For real numbers $\varepsilon,\delta>0$ and integers $N,s\ge 1$, we say that $\Pi$ admits an $(\varepsilon,\delta,N,s)$-removal lemma if the following holds. For every integer $n\ge N$ and every $n$-vertex graph $G=(V,E)$ that is $\varepsilon$-far from $\Pi$, there are at least $\delta\binom{n}{s}$ sets $S\in\binom{V}{s}$ such that $G[S]\notin \Pi$.
\end{defn}

\begin{thm}[Removal--container equivalence]\label{thm:removal-container-equivalence}
Let $\Pi$ be a hereditary graph property.
\begin{enumerate}
\item If $\Pi$ admits an $(\varepsilon,\delta,N,s)$-removal lemma, then it
admits the following ordered container statement: for every integer
$n\geq\max\{N,s\}$ and every $n$-vertex graph $G=(V,E)$ that is
$\varepsilon$-far from $\Pi$, there are maps
\(f:\{U\subseteq V:G[U]\in\Pi\} \longrightarrow \bigcup_{j=0}^{s-1}V_{\neq}^{j}, \qquad C:\operatorname{im}(f) \longrightarrow \binom{V}{\leq(1-\delta/s)n},\)
such that every entry of $f(U)$ belongs to $U$ and
$U\subseteq C(f(U))$ whenever $G[U]\in\Pi$.

\item Conversely, suppose that $\Pi$ admits an ordered container statement
with proximity parameter $\varepsilon$, shrinkage $\eta$, threshold $N$,
and fingerprint bound $q$.  Then, for every integer $t\geq q$ such that
\(\sum_{j=0}^{q}t^j(1-\eta)^{t-j}<1,\)
the property $\Pi$ admits an $(\varepsilon,\delta_t,N,t)$-removal lemma,
where
\(\delta_t=1-\sum_{j=0}^{q}t^j(1-\eta)^{t-j}>0.\)
\end{enumerate}
\end{thm}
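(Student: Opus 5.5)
Part (1) will follow from Theorem~\ref{thm:dense-ordered-containers-intro}, and part (2) from a union bound over fingerprints.

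\emph{Forward direction.} Fix $n\ge\max\{N,s\}$ and an $n$-vertex graph $G$ that is $\varepsilon$-far from $\Pi$. Form the $s$-uniform hypergraph $H_{G,\Pi}$ whose edges are the sets $S\in\binom{V}{s}$ with $G[S]\notin\Pi$. The removal lemma gives $e(H_{G,\Pi})\ge\delta\binom ns$. Since $\Pi$ is hereditary, any $U$ with $G[U]\in\Pi$ contains no edge, so $U\in\cI(H_{G,\Pi})$. We then restrict the maps $f,C$ from Theorem~\ref{thm:dense-ordered-containers-intro} to these sets. This gives fingerprints of length at most $s-1$ with entries in $U$, and containers of size at most $(1-\delta/s)n$.

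The real work is proving Theorem~\ref{thm:dense-ordered-containers-intro} itself, and I would do so by the pivot induction sketched in the introduction. Induct on the uniformity. I would prove the slightly stronger statement with ambient set $V$ and target shrinkage $\delta/s$, allowing edges of $H$ to be counted relative to $\binom ns$.

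\begin{itemize}
\item \emph{Base case $s=1$.} The edges are at least $\delta n$ singletons, and none of them lies in $I$. Take the empty fingerprint and let the container be $V$ minus these singletons.
\item \emph{Inductive step.} Order $V$ by degree, breaking ties by a fixed order. Given $I$, let $v$ be the element of $I$ of maximum degree, and let $B_v$ be the set of vertices ranked above $v$; then $I\subseteq V\setminus B_v$.
\item \emph{Case A: $|B_v|\ge\delta n/s$.} Output fingerprint $(v)$ and container $V\setminus B_v$. The container is recoverable from $v$ alone.
\item \emph{Case B: $|B_v|<\delta n/s$.} The vertices in $B_v$ carry at most $|B_v|\binom{n-1}{s-1}<(\delta/s)\,n\binom{n-1}{s-1}=\delta\binom ns$ edge-incidences. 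Hence the vertices outside $B_v$ carry at least $e(H)s-\delta\binom ns\cdot(\text{const})$ incidences. Since $v$ has maximum degree among the vertices outside $B_v$, its link has density $\gtrsim\delta(s-1)/s$ relative to $\binom{n-1}{s-1}$.
\item Then $I\setminus\{v\}$ is independent in the link, so we recurse and prepend $v$ to the resulting fingerprint.
\end{itemize}

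The main obstacle is the bookkeeping of the density loss and the shrinkage across levels. One must check that after $j$ pivots the link density stays at least $\delta(s-j)/s$, so that the final $1$-uniform link has at least $\delta n/s$ edges. One must also check that the excluded set at any Case-A step is a $\delta/s$ fraction of the current ambient set, which is of size $n-j$, and so is still about $\delta n/s$ of $V$. I would try to prove the invariant ``link density $\ge\delta(s-j)/s$'' directly by averaging: the degree of $v$ is at least the average degree over $V\setminus B_v$. If exact constants fail, I would fall back on taking the previous pivots out of the ambient set and absorbing $O(s)$ errors using $n\ge s$.

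\emph{Converse direction.} Let $G$ be $\varepsilon$-far from $\Pi$ with $n\ge N$, and let $T$ be a uniform $t$-subset of $V$. If $G[T]\in\Pi$, then $f(T)=(x_1,\dots,x_j)$ with $j\le q$ and all $x_i\in T$, and $T\subseteq C(f(T))$, a set of size at most $(1-\eta)n$. Union-bound over $j$ and over the ordered tuples $x$ whose container $C(x)$ contains $T$. For a fixed tuple $x$, the probability that $T\supseteq\{x_i\}$ and $T\subseteq C(x)$ is at most $\binom{\lfloor(1-\eta)n\rfloor-j}{t-j}/\binom nt$. There are $(n)_j$ tuples, and
\[(n)_j\binom{n-j}{t-j}\big/\binom nt=(t)_j.\]
This yields
\[\Pr[G[T]\in\Pi]\le\sum_{j=0}^q (t)_j\frac{\binom{\lfloor(1-\eta)n\rfloor-j}{t-j}}{\binom{n-j}{t-j}}\le\sum_j t^j(1-\eta)^{t-j},\]
using $\binom{m-j}{t-j}/\binom{n-j}{t-j}\le((m-j)/(n-j))^{t-j}\le(1-\eta)^{t-j}$. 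Taking complements gives at least $\delta_t\binom nt$ sets $T$ with $G[T]\notin\Pi$. If $t>n$, then $\binom nt=0$ and the statement holds trivially.
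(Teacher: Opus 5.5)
Your converse is exactly the paper's argument: a union bound over fingerprints using $(n)_j/\binom nt=(t)_j/\binom{n-j}{t-j}$. Your reduction of (1) to dense ordered containers, and the pivot-and-link mechanism, are also the paper's. Stopping early in Case~A is a harmless variant, because the case decision can be replayed from the fingerprint prefix.

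The gap is the step you yourself flag as the main obstacle. The invariant you propose does not give the stated bound $(1-\delta/s)n$:
\begin{enumerate}
\item Density at least $\delta(s-j)/s$ relative to $\binom{n-j}{s-j}$ only guarantees $\frac{\delta}{s}(n-s+1)$ singletons in the final $1$-uniform link, not $\delta n/s$.
\item A Case~A step that removes a $\delta/s$ fraction of the current ambient set removes only $\delta(n-j)/s$ vertices.
\item Equivalently, recursing on the link as a fresh instance on $n-1$ vertices and adding $v$ back gives containers of size $(1-\delta/s)(n-1)+1=(1-\delta/s)n+\delta/s$. This overshoots by $\delta/s$ at every level.
\item If you instead keep the Case~A threshold absolute at $\delta n/s$, your invariant stops sustaining itself. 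The incidences at $B_v$ can then be as large as nearly $\frac{\delta n}{s}\binom{n-j-1}{s-j-1}$, which exceeds your lower bound $\frac{\delta(n-j)}{s}\binom{n-j-1}{s-j-1}$ on $e(H_j)$, so the averaging step breaks.
\end{enumerate}
Your fallback of absorbing $O(s)$ errors via $n\ge s$ only yields a weaker shrinkage, about $\delta/s^2$ when $n$ is close to $s$; that proves a weaker theorem. You also do not treat independent sets with fewer than $s-1$ vertices, which run out of pivots. The paper gives such a set its increasing ordering as fingerprint and the set itself as container.

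The missing idea is to normalize everything by the original $n$. Write $n_k=n-s+k$ and $m_k=e(H^{(k)})$ for the $k$-uniform link, and maintain
\[
m_k\ \ge\ \frac{\delta n}{s}\binom{n_k-1}{k-1}.
\]
This holds at $k=s$, and at $k=1$ it gives exactly $\delta n/s$ singletons.
\begin{enumerate}
\item Keep the Case~A threshold at $\delta n/s$ at every level.
\item Define every container as $V$ minus the excluded set, so pivots are never added back.
\item In Case~B, $B_v$ carries fewer than $\frac{\delta n}{s}\binom{n_k-1}{k-1}\le m_k$ incidences, so the remaining vertices carry more than $(k-1)m_k$.
\item If $B_v\neq\emptyset$, there are at most $n_k-1$ remaining vertices. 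If $B_v=\emptyset$, then $\deg v\ge km_k/n_k$.
\item Either way $\deg v\ge\frac{k-1}{n_k-1}m_k$, which propagates the invariant exactly because $\frac{k-1}{n_k-1}\binom{n_k-1}{k-1}=\binom{n_k-2}{k-2}$.
\end{enumerate}
This is precisely the paper's threshold $\frac{k-1}{n_k-1}m_k$ and its estimate $|Y|\ge m_k/\binom{n_k-1}{k-1}$. The paper differs only in running all $s-1$ pivots and excluding every $X_i$ together with $E(H^{(1)})$. It locates the last level at which the pivot's degree drops below the threshold in the analysis, not in the construction.
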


\begin{proof}
We first prove the forward implication.  Let $G=(V,E)$ be an $n$-vertex graph
with $n\geq\max\{N,s\}$ that is $\varepsilon$-far from $\Pi$, and let $H$ be the
$s$-uniform hypergraph on $V$ whose edges are the sets $S\in\binom Vs$ for
which $G[S]\notin\Pi$.  By hypothesis,
$e(H)\geq\delta\binom ns$.  Every set $U\subseteq V$ with $G[U]\in\Pi$ is an
independent set of $H$.

Fix a total order on $V$, used only to make all choices canonical.  If $s=1$,
let every independent set have the empty fingerprint and put
$C(())=V\setminus E(H)$, where the edge set of the $1$-uniform hypergraph is
identified with a subset of $V$.  Then
$|C(())|\leq(1-\delta)n$, and the conclusion follows.  We may therefore assume
$s\geq2$.

If $n=s$, then $H$ is nonempty and its unique possible edge is $V$.  Hence
every independent set is a proper subset of $V$.  Assign to an independent set
its increasing ordering as fingerprint and use the set itself as its
container.  Since $\delta\leq1$, every such container has size at most
$n-1\leq(1-\delta/s)n$.

Assume henceforth that $n>s$.  If an independent set $U$ has
$|U|\leq s-2$, let $f(U)$ be its increasing ordering and set $C(f(U))=U$.
These fingerprints have length at most $s-2$.  Now let $|U|\geq s-1$.
Starting from $H^{(s)}=H$, define hypergraphs
$H^{(s)},H^{(s-1)},\ldots,H^{(1)}$ recursively.  For
$i=s,s-1,\ldots,2$, choose a vertex
$v_i\in U\cap V(H^{(i)})$ of maximum degree among the vertices of
$U\cap V(H^{(i)})$, breaking ties by the fixed order, and put
\(X_i=\{u\in V(H^{(i)}): \deg_{H^{(i)}}(u)>\deg_{H^{(i)}}(v_i)\}.\)
Let $H^{(i-1)}$ be the $(i-1)$-uniform link of $v_i$ in $H^{(i)}$, on the
vertex set $V(H^{(i)})\setminus\{v_i\}$.  Since $U$ is independent in
$H^{(i)}$, the set $U\cap V(H^{(i-1)})$ is independent in $H^{(i-1)}$; in
particular, the construction is well defined through the last step.

Set
\(f(U)=(v_s,v_{s-1},\ldots,v_2)
\)
and define
\(C(f(U)) =V\setminus\left(E(H^{(1)})\cup\bigcup_{i=2}^{s}X_i\right).\)
Here again $E(H^{(1)})$ is viewed as a set of vertices.  This is a genuine
function of the fingerprint: starting from $H$, the ordered tuple
$(v_s,\ldots,v_2)$ uniquely reconstructs all links $H^{(i)}$, all sets $X_i$,
and hence the displayed container.  Thus two independent sets with the same
fingerprint receive the same container.  Moreover, fingerprints arising in
this case have length exactly $s-1$, so they cannot coincide with the
fingerprints used for sets of size at most $s-2$.

By maximality of $v_i$ inside $U\cap V(H^{(i)})$, every $X_i$ is disjoint from
$U$.  Also, $U\cap V(H^{(1)})$ avoids $E(H^{(1)})$.  Consequently
$U\subseteq C(f(U))$.

It remains to prove the shrinkage.  Write
$n_i=|V(H^{(i)})|=n-s+i$ and $m_i=e(H^{(i)})$.  Then
$m_{i-1}=\deg_{H^{(i)}}(v_i)$ for $2\leq i\leq s$.  Let $k$ be the least
index in $[s]$ such that either $k=1$ or
\(m_{k-1}<\frac{k-1}{n_k-1}m_k.\)
For every $j=k+1,\ldots,s$ we therefore have
$m_{j-1}\geq\frac{j-1}{n_j-1}m_j$, and hence
\[
m_k
\geq
\left(\prod_{j=k+1}^{s}\frac{j-1}{n_j-1}\right)m_s
\geq
\frac{\delta n}{s}\binom{n_k-1}{k-1}.
\]
If $k=1$, then $|E(H^{(1)})|=m_1\geq\delta n/s$, which gives the required
bound on $|C(f(U))|$.

Suppose that $k\geq2$ and put
\[
Y=\left\{u\in V(H^{(k)}):
\deg_{H^{(k)}}(u)>\frac{k-1}{n_k-1}m_k\right\}.
\]
Since $n_k\geq k+1$, the average degree $km_k/n_k$ is strictly larger than
the threshold in this definition.  Furthermore,
\(|Y|\geq\frac{m_k}{\binom{n_k-1}{k-1}}.\)
Indeed, otherwise the contribution to the degree sum from $Y$ is less than
$m_k$, while the remaining at most $n_k-1$ vertices contribute at most
$(k-1)m_k$, contradicting that the total degree sum is $km_k$.  By the choice
of $k$,
$\deg_{H^{(k)}}(v_k)=m_{k-1}<\frac{k-1}{n_k-1}m_k$, so
$Y\subseteq X_k$.  It follows that
\[
|X_k|\geq |Y|
\geq\frac{m_k}{\binom{n_k-1}{k-1}}
\geq\frac{\delta n}{s}.
\]
Thus in every case $|C(f(U))|\leq(1-\delta/s)n$, proving the first
implication.

For the converse, let $G=(V,E)$ be an $n$-vertex graph with $n\geq N$ that is
$\varepsilon$-far from $\Pi$.  The assertion is vacuous when $n<t$, so assume
$n\geq t$ and choose $T\in\binom Vt$ uniformly.  If $G[T]\in\Pi$, then for
some $j\leq q$ its fingerprint $F=f(T)$ is an ordered $j$-tuple contained in
$T$, and
\(T\subseteq C(F)\), where $|C(F)|\leq(1-\eta)n$.  For a fixed ordered tuple
$F$ of length $j$,
\(\Pr[F\subseteq T\subseteq C(F)] =\frac{\binom{|C(F)|-j}{t-j}}{\binom nt}.\)
There are at most $(n)_j$ possible ordered tuples of length $j$.  Summing over
all fingerprints and using
\(\frac{(n)_j}{\binom nt} =\frac{(t)_j}{\binom{n-j}{t-j}}\)
gives
\begin{align*}
\Pr[G[T]\in\Pi]
&\leq
\sum_{j=0}^{q}(t)_j
\frac{\binom{\lfloor(1-\eta)n\rfloor-j}{t-j}}
{\binom{n-j}{t-j}}\\
&\leq
\sum_{j=0}^{q}t^j(1-\eta)^{t-j}.
\end{align*}
Therefore at least a $\delta_t$-fraction of the $t$-subsets induce graphs
outside $\Pi$, as required.
\end{proof}

We now deduce the main characterization theorem from Theorem~\ref{thm:removal-container-equivalence} together with the standard canonical reduction for one-sided testers in the dense graph model.

\begin{proof}[Proof of Theorem \ref{thm:hereditary-characterization}]
Suppose first that $\Pi$ has a size-oblivious one-sided tester of query
complexity at most $Q(\varepsilon)$.  By the canonicalization theorem, there
are absolute constants $C>0$ and $c\in(0,1)$ such that, at proximity
$\varepsilon$, a canonical one-sided tester samples
$s\leq C Q(c\varepsilon)$ vertices.  Enlarge $s$ to an integer if necessary.
If an $n$-vertex graph $G$ is $\varepsilon$-far from $\Pi$ and $n\geq s$,
then at least a $2/3$-fraction of its induced $s$-vertex subgraphs lie outside
$\Pi$.  Thus $\Pi$ admits an $(\varepsilon,2/3,s,s)$-removal lemma.  The first
part of Theorem~\ref{thm:removal-container-equivalence} gives ordered
containers with
\(q(\varepsilon)=s-1\),
\(\eta(\varepsilon)=\frac{2}{3s}\),
and \(N(\varepsilon)=s\), which proves the stated bounds.

Conversely, fix $\varepsilon>0$ and abbreviate
$q=q(\varepsilon)$, $\eta=\eta(\varepsilon)$, and $N=N(\varepsilon)$.  By
Lemma~\ref{lem:counting-from-containers} below, there is an absolute constant
$C$ such that
\(t=\left\lceil \frac{C(q+1)}{\eta} \log\left(\frac{q+1}{\eta}+2\right) \right\rceil\)
satisfies
\(\sum_{j=0}^{q}t^j(1-\eta)^{t-j}\leq1/3\).
Set $T=\max\{N,t\}$.  The tester requests a uniformly random set of $T$
vertices; under our standing convention, if the input has fewer than $T$
vertices, it receives and inspects the whole input.  It rejects exactly when
the revealed induced graph does not belong to $\Pi$.

The tester is one-sided by heredity.  On an input with fewer than $T$ vertices
it decides membership exactly.  If $n\geq T$ and the input is
$\varepsilon$-far from $\Pi$, the second part of
Theorem~\ref{thm:removal-container-equivalence} shows that a uniformly random
$t$-set is bad with probability at least $2/3$.  A uniformly random $T$-set
can be generated first and then a uniformly random $t$-subset chosen inside
it.  Since $\Pi$ is hereditary, a good $T$-set contains only good $t$-sets;
hence the probability that the sampled $T$-set is good is at most $1/3$.
The query complexity is
\(O(T^2) =\widetilde O\!\left( N(\varepsilon)^2+ \frac{(q(\varepsilon)+1)^2}{\eta(\varepsilon)^2} \right).\)
\end{proof}

\subsection{Semi-hereditary envelopes}

We now pass from hereditary properties to arbitrary graph properties via the Alon--Shapira notion of semi-hereditariness. 

\begin{defn}[Semi-hereditary graph property]
A graph property $\Pi$ is said to be \textit{semi-hereditary} if there exist a hereditary graph property $H\supseteq \Pi$ and a function $M:(0,1)\to\N$ such that, for every $\varepsilon>0$, every graph $G\in H$ with at least $M(\varepsilon)$ vertices is $\varepsilon$-close to $\Pi$.
\end{defn}

The forward implication combines their hereditary-envelope construction with Theorem~\ref{thm:hereditary-characterization}. For the converse, we test the hereditary envelope first and then use a brute-force routine below the semi-hereditary threshold.

\begin{proof}[Proof of Corollary \ref{cor:full-characterization}]
Suppose first that $\Pi$ has a size-oblivious one-sided tester of query
complexity at most $Q(\varepsilon)$.  By canonicalization, there are absolute
constants $C>0$ and $c\in(0,1)$ and, for each $\varepsilon>0$, a canonical
one-sided tester $T_\varepsilon$ sampling at most
$s(\varepsilon)=C Q(c\varepsilon)$ vertices.  Let $\cB$ be the family of all
finite graphs that occur as a rejection type of some $T_\varepsilon$, and let
$H$ be the hereditary property of containing no member of $\cB$ as an induced
subgraph.

One-sidedness gives $\Pi\subseteq H$.  For each fixed $\varepsilon$, the
same tester $T_\varepsilon$ tests $H$: it never rejects an input in $H$, while
an input that is $\varepsilon$-far from $H$ is also $\varepsilon$-far from
$\Pi$.  Moreover, if $G\in H$, $|V(G)|\geq s(\varepsilon)$, and $G$ were
$\varepsilon$-far from $\Pi$, then $T_\varepsilon$ would reject with positive
probability and hence expose a member of $\cB$ inside $G$, a contradiction.
Thus $(H,M)$ is a semi-hereditary witness with
$M(\varepsilon)\leq s(\varepsilon)$.  Applying
Theorem~\ref{thm:hereditary-characterization} to $H$ gives the asserted
ordered-container parameters.

Conversely, suppose that $(H,M)$ is a semi-hereditary witness for $\Pi$ and
that $H$ has ordered-container parameter functions $(\eta,N,q)$.  Apply
Theorem~\ref{thm:hereditary-characterization} to obtain a canonical one-sided
tester for $H$ at proximity $\varepsilon/2$, with sample size
\(S_H=\widetilde O\!\left( N(\varepsilon/2)+ \frac{q(\varepsilon/2)+1}{\eta(\varepsilon/2)} \right).\)
Set
\(T=\max\{M(\varepsilon/2),S_H\}\).  Request a uniformly random set of $T$
vertices.  If the input has fewer than $T$ vertices, inspect it completely and
decide membership in $\Pi$ exactly.  Otherwise run the tester for $H$ on the
appropriate subset of the sampled vertices.

Inputs in $\Pi$ are accepted because $\Pi\subseteq H$.  Now let $G$ be
$\varepsilon$-far from $\Pi$ and have at least $T$ vertices.  Then $G$ is
$\varepsilon/2$-far from $H$: otherwise there would be a graph $G'\in H$ on
the same vertex set with $\dist(G,G')\leq\varepsilon/2$, while
semi-hereditariness would make $G'$ $\varepsilon/2$-close to $\Pi$, contrary
to the triangle inequality.  Hence the tester for $H$ rejects with probability
at least $2/3$.  The resulting query complexity is
\(O\!\left(M(\varepsilon/2)^2\right) +\widetilde O\!\left( N(\varepsilon/2)^2+ \frac{(q(\varepsilon/2)+1)^2}{\eta(\varepsilon/2)^2} \right).\)
\end{proof}

\part{Bounded-arity relational structures}

The graph argument rests only on an ordered-container lemma for independent
sets in a dense uniform hypergraph.  It therefore does not depend on the
ambient object having a single symmetric irreflexive binary relation.  We now
work over an arbitrary finite relational signature in the sense of
Definition~\ref{def:finite-relational-signature}.

\section{Finite relational structures and induced sampling}
\label{sec:relational-preliminaries}

Fix a finite relational signature $\tau$ and write
$r=r(\tau)=\max_{R\in\tau}a(R)$.  A finite $\tau$-structure $\cA$ has vertex set
$V(\cA)$ and relations $R^{\cA}\subseteq V(\cA)^{a(R)}$ for $R\in\tau$.

\begin{defn}[Relational properties]
For $U\subseteq V(\cA)$, the \textit{induced substructure} $\cA[U]$ is obtained
by restricting every relation to tuples all of whose entries lie in $U$.  A
$\tau$-property is a family of finite $\tau$-structures closed under
isomorphism.  It is \textit{hereditary} if $\cA\in\cP$ implies
$\cA[U]\in\cP$ for every $U\subseteq V(\cA)$, and it is \textit{extendable} if,
for every $\cA\in\cP$ and every $m\geq0$, there is a structure $\cB\in\cP$ on
$|V(\cA)|+m$ vertices containing $\cA$ as an induced substructure.
\end{defn}

\begin{defn}[Distance between relational structures]
If $\cA$ and $\cB$ are $\tau$-structures on the same $n$-element vertex set,
define
\begin{equation}
\label{eq:relational-distance}
d_\tau(\cA,\cB)
=\frac1{|\tau|}\sum_{R\in\tau}
\frac{|R^{\cA}\triangle R^{\cB}|}{n^{a(R)}}.
\end{equation}
We write $d_\tau(\cA,\cP)$ for the minimum of $d_\tau(\cA,\cB)$ over all
$\cB\in\cP$ on the same vertex set, and say that $\cA$ is
\textit{$\varepsilon$-far from $\cP$} if this minimum is greater than
$\varepsilon$; otherwise it is \textit{$\varepsilon$-close to $\cP$}.  For a
simple graph encoded as a symmetric irreflexive binary relation, this differs
from the normalization in Part~I only by an absolute constant.
\end{defn}

The intrinsic testing notion in this setting is induced sampling.

\begin{defn}[Oblivious induced-sampling tester]
Let $\cP$ be a $\tau$-property.  An \textit{oblivious induced-sampling tester}
for $\cP$ consists, for every $\varepsilon>0$, of an integer
$s(\varepsilon)$ and a collection $\cR_\varepsilon$ of isomorphism types of
$\tau$-structures on $s(\varepsilon)$ vertices.  On an input $\cA$ with
$n\geq s(\varepsilon)$, the tester chooses a uniformly random
$s(\varepsilon)$-subset $S\subseteq V(\cA)$, reveals the complete induced
structure $\cA[S]$, and rejects if and only if
$\cA[S]\in\cR_\varepsilon$.  When $n<s(\varepsilon)$, it may inspect the
whole input and decide membership exactly.

The tester has one-sided error if it always accepts structures in $\cP$, and
is sound if it rejects every $\varepsilon$-far input with probability at least
$2/3$.  Its sample complexity is $s(\varepsilon)$.
\end{defn}

For hereditary properties, the rejection rule may always be taken to be the
complement of the property on the sampled set.

\begin{lem}\label{lem:relational-membership-rule}
Let $\cP$ be hereditary and suppose that it has a one-sided induced-sampling
tester with sample complexity $s(\varepsilon)$.  Then the tester which samples
the same number of vertices and rejects exactly when
$\cA[S]\notin\cP$ is also one-sided and has at least the same rejection
probability on every input.
\end{lem}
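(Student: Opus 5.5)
The plan is to compare the two testers input by input: the original tester with rejection family $\cR_\varepsilon$, and the new tester whose rejection set consists of the isomorphism types of $\tau$-structures on $s(\varepsilon)$ vertices not lying in $\cP$. Both sample the same uniformly random set $S\in\binom{V(\cA)}{s(\varepsilon)}$, so it suffices to prove the inclusion $\cR_\varepsilon\subseteq\{\text{types on } s(\varepsilon)\text{ vertices outside }\cP\}$. Then, on every input with $n\geq s(\varepsilon)$, the event that the old tester rejects is contained in the event that the new one rejects. On inputs with $n<s(\varepsilon)$, both testers may inspect the whole input. We let the new tester decide membership in $\cP$ exactly there, which is both one-sided and correct, so its rejection probability there is $1$ on non-members and $0$ on members.

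The key step is the inclusion itself, and it uses one-sidedness together with heredity. Suppose some type $\cB\in\cR_\varepsilon$ belonged to $\cP$. Take $\cA=\cB$ itself, an input on exactly $s(\varepsilon)$ vertices. The tester samples $S=V(\cB)$ and sees $\cA[S]=\cB\in\cR_\varepsilon$, so it rejects a member of $\cP$ with probability one. This contradicts one-sidedness. Hence every rejection type lies outside $\cP$. Strictly speaking this step only needs the types in $\cR_\varepsilon$ to be realizable as inputs of size $s(\varepsilon)$, which they are by definition.

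Next, one-sidedness of the new tester follows from heredity. If $\cA\in\cP$, then every induced substructure $\cA[S]$ lies in $\cP$, so the new tester never rejects. Combining this with the inclusion gives both claims.

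The only delicate point is the convention for inputs smaller than $s(\varepsilon)$. There I would simply stipulate exact membership decision, since the definition allows full inspection. I expect no real obstacle. The statement is essentially the observation already made for graphs in Part~I, transported verbatim to $\tau$-structures.
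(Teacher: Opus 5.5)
Your proposal is correct and follows the paper's argument: every rejection type lies outside $\cP$, since running the tester on that type itself would otherwise contradict one-sidedness, and heredity then makes the complement-of-$\cP$ rule one-sided. The inclusion step actually uses only one-sidedness, not heredity, and your explicit treatment of inputs with fewer than $s(\varepsilon)$ vertices is a harmless addition that the paper leaves implicit.
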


\begin{proof}
Every isomorphism type on which the original tester can reject lies outside
$\cP$.  Indeed, if a structure $\cF\in\cP$ were a rejection type, then running
the tester on $\cF$ itself would contradict one-sidedness.  Hence replacing
the original rejection family by the whole complement of $\cP$ can only
increase the rejection probability.  Heredity ensures that no induced
substructure of an input in $\cP$ lies outside $\cP$.
\end{proof}

We shall also use the dense relation-query model corresponding to Part~I.
The tester is not given the order of the input; it obtains vertex labels from
an independent uniform-sampling oracle, and a relation query specifies a
relation symbol together with a tuple of previously sampled labels.  Its
complexity may depend on the proximity parameter and on the fixed signature,
but not on the input order.  The canonicalization theorem of Goldreich and
Trevisan \cite{goldreichTrevisan2003three} extends without change from graphs
to a fixed finite relational signature; we record the precise consequence
used below.

\begin{thm}[Canonicalization for bounded-arity structures]
\label{thm:relational-canonicalization}
Let $\tau$ be a fixed finite signature.  There are constants
$c_\tau,C_\tau>0$ such that the following holds.  If a $\tau$-property has a
size-oblivious one-sided tester making at most $Q(\varepsilon)$ relation
queries, then it has a one-sided induced-sampling tester sampling at most
\(C_\tau Q(c_\tau\varepsilon)\)
vertices.  If the property is hereditary, the induced sampler may reject
exactly when the sampled induced structure does not satisfy the property.
\end{thm}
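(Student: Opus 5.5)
We follow the Goldreich--Trevisan canonicalization argument and check that nothing in it uses the arity, symmetry, or irreflexivity of the graph adjacency relation. Fix $\varepsilon$ and a one-sided size-oblivious tester $T$ making at most $Q=Q(\varepsilon)$ relation queries.

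\textbf{Step 1: the tester sees only a bounded sample.} Each query names a relation symbol and a tuple of at most $r$ previously sampled labels. We may therefore assume that $T$ first draws $k\leq rQ$ labels from the sampling oracle and afterwards queries only tuples of these labels. Because the labels are independent uniform draws, with repetitions, the multiset of draws has the distribution of a random sequence. Drawing a uniform set $S$ of size $k$ and assigning labels by a random map produces the same distribution up to total variation $O(k^2/n)$. For $n$ large this error is absorbed by amplification. For small $n$ we use the convention that an oversized sample request inspects the whole input, so the tester decides exactly there.

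\textbf{Step 2: make the decision depend only on $\cA[S]$.} Next replace $T$ by a tester that reveals all of $\cA[S]$, that is, at most $|\tau|k^r$ relation queries. It then simulates $T$ on a uniformly random relabelling of $S$ and rejects with the resulting probability. The key step is the standard symmetrization: the input distribution is invariant under vertex permutations. So the acceptance probability of $T$ on $\cA$ equals the average over relabellings $\pi$ of its acceptance probability on $\pi\cA$, and that average depends only on the isomorphism type of $\cA[S]$ together with the internal coin tosses. The obstacle is that $T$ may use its labels adaptively or in ways tied to $n$. Size-obliviousness rules out dependence on $n$. Label-dependence is eliminated by the random relabelling, at the usual cost of replacing $\varepsilon$ by $c_\tau\varepsilon$ and the sample size by a constant factor.

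\textbf{Step 3: derandomize the decision while keeping one-sided error.} Define $\cR_\varepsilon$ as the set of isomorphism types $\cF$ on which the symmetrized tester rejects with positive probability. Suppose $\cF\in\cR_\varepsilon$ occurs as an induced substructure of some member of $\cP$. Then the original tester would reject that member with positive probability, contradicting one-sidedness. Hence accepting on every type outside $\cR_\varepsilon$ keeps perfect completeness, and rejecting on every type inside $\cR_\varepsilon$ only increases the rejection probability. On an $\varepsilon$-far input, the rejection probability of $T$ is at most $\Pr[\cA[S]\in\cR_\varepsilon]$, so the new tester rejects with probability at least $2/3$, after a constant number of repetitions if needed to absorb the Step 1 error. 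Repetitions are merged into a single larger sample, using the subsampling argument from the proof of Theorem~\ref{thm:hereditary-characterization}. This yields sample size at most $C_\tau Q(c_\tau\varepsilon)$.

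\textbf{Step 4: the hereditary case.} The final sentence of the statement, that the sampler may reject exactly when $\cA[S]\notin\cP$, follows immediately from Lemma~\ref{lem:relational-membership-rule}.

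The main obstacle is Step 2: the symmetrization has to be carried out over arbitrary tuples, including repeated entries, and the resulting losses must be tracked so that the constants depend only on $|\tau|$ and $r$.
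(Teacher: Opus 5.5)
Your route matches the paper's. Both use the Goldreich--Trevisan random relabelling, the bound of $rQ$ labels per transcript, revelation of the full induced structure on a random set, one-sidedness to fix the rejection family, and Lemma~\ref{lem:relational-membership-rule} for the hereditary clause. The paper just says that collisions and inputs of order comparable to $Q$ are treated as in the graph case. Your explicit accounting of that step, however, leaves a hole.

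The collision loss $O(k^2/n)$ is small only when $n\geq Ck^2$. Whole-input inspection covers only $n$ below the sample size, which must be $O_\tau(Q)$ for the claimed bound. So inputs with $C_\tau Q\leq n<Ck^2$ are covered by neither. For $n$ close to $k$ the loss is of order one, so no constant number of repetitions absorbs it. Raising the inspection threshold to $Ck^2$ would make the sample of order $Q^2$, contradicting $C_\tau Q(c_\tau\varepsilon)$.

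The missing idea is that for a one-sided tester collisions cost nothing, and your own Step~3 observation almost gives this.

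\begin{itemize}
\item Once your relabelling makes label names irrelevant, a run of $T$ is determined by the equality pattern of its draws, the injective tuple $y$ of distinct drawn labels, and its coins.
\item Suppose $T$ rejects $\cA$ on such a run. Then $\cA[\{y\}]$ is not an induced substructure of any $\cB\in\cP$. Otherwise the corresponding run on $\cB$ has positive probability and produces the identical view, including answers on tuples with repeated entries, contradicting one-sidedness.
\item Conditional on the pattern, the set $D$ of distinct labels is a uniform $|D|$-subset of $V(\cA)$. Extend it by uniformly random further vertices to a uniform $k$-set $S$.
\item The condition ``not an induced substructure of a member of $\cP$'' persists when passing to larger induced substructures. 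Hence, for every $n\geq k$, $\Pr[T\text{ rejects }\cA]$ is at most the probability that $\cA[S]$ is not an induced substructure of any member of $\cP$.
\end{itemize}

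So take the induced sampler on $k\leq rQ$ vertices that rejects exactly in that case, and inspects inputs with $n<k$ completely. It is one-sided and rejects every $\varepsilon$-far input with probability at least $2/3$. This needs no collision loss, no amplification and no rescaling of $\varepsilon$. It also removes the merging of repetitions in your Step~3. That step was shaky anyway: the subsampling argument you cite relies on the rejection rule being closed under passing to larger samples, and you did not show this for your $\cR_\varepsilon$.
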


\begin{proof}
The random-relabeling proof of Goldreich and Trevisan uses only permutation
invariance of the input and the fact that a query transcript mentions a
bounded number of vertex labels.  Both features remain valid for a fixed
relational signature.  Indeed, if $r$ is the maximum arity, a transcript of
$Q$ relation queries involves at most $rQ$ distinct labels.  Conditional on
its equality pattern, a uniformly random relabelling sends these labels to a
uniformly random ordered tuple of distinct vertices.  Hence revealing the
complete induced structure on a uniformly random set of $O_\tau(Q)$ vertices
contains all relation values needed to reproduce the transcript, including
adaptive queries.  The treatment of collisions and inputs whose order is
comparable with $Q$, as well as the amplification and constant rescaling of
the proximity parameter, is exactly the same as in the graph proof.  This
gives constants $c_\tau,C_\tau$ depending only on the signature.  Random
relabeling preserves distance from an isomorphism-invariant property and
preserves one-sidedness.  The final assertion follows from
Lemma~\ref{lem:relational-membership-rule}.
\end{proof}

Revealing the complete induced structure on $t$ vertices uses at most
\begin{equation}
\label{eq:relation-query-cost}
\sum_{R\in\tau}t^{a(R)}\leq |\tau|t^r
\end{equation}
relation queries.  Thus sample complexity is the natural arity-independent
parameter, while relation-query complexity is recovered by taking the
$r$-th power, up to a constant depending on the signature.

\section{The relational characterization}
\label{sec:relational-characterization}

We first isolate the pure hypergraph statement already proved inside
Theorem~\ref{thm:removal-container-equivalence}.  If $H$ is a hypergraph, let
$\cI(H)$ denote its family of independent sets.

\begin{lem}[Dense ordered containers]
\label{lem:dense-ordered-containers}
Let $H$ be an $s$-uniform hypergraph on an $n$-element vertex set $V$, where
$1\leq s\leq n$, and suppose that
\(e(H)\geq\delta\binom{n}{s}\).
Then there are a family $\cF$ of ordered tuples of distinct vertices, each of
length at most $s-1$, and maps
\(f:\cI(H)\to\cF,  C:\cF\to\binom{V}{\leq(1-\delta/s)n},\)
such that every entry of $f(I)$ belongs to $I$ and
\(I\subseteq C(f(I))\)
for every $I\in\cI(H)$.
\end{lem}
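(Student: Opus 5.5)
The plan is to extract the argument already given in the forward direction of Theorem~\ref{thm:removal-container-equivalence}, which never used the graph $G$ beyond the hypergraph $H$ of bad $s$-sets. Fix a total order on $V$ to make all choices canonical. The proof splits by cases. If $s=1$, every independent set gets the empty fingerprint, and the container is $V\setminus E(H)$, which has size at most $(1-\delta)n$. If $n=s$ and $H$ is nonempty (we may assume $\delta>0$; if $\delta=0$, take the container to be $V$ itself with the empty fingerprint), every independent set is a proper subset of $V$. Each such set is then its own container, with its increasing ordering as fingerprint, and this has length at most $s-1$. Likewise, when $n>s$, independent sets of size at most $s-2$ get their increasing ordering as fingerprint and themselves as container. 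Since these fingerprints have length at most $s-2$, they cannot collide with those produced in the main case.

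For the main case, $|I|\ge s-1$ and $n>s$, run the pivot procedure. Set $H^{(s)}=H$. For $i=s,\dots,2$, choose $v_i$ of maximum degree in $H^{(i)}$ among the vertices of $I\cap V(H^{(i)})$, breaking ties by the fixed order. Let $X_i$ be the set of vertices of strictly larger degree in $H^{(i)}$, and pass to the link $H^{(i-1)}$ of $v_i$ on $V(H^{(i)})\setminus\{v_i\}$. Independence of $I$ is preserved in the links, so the construction is well defined; the hypothesis $|I|\ge s-1$ guarantees that a pivot is always available. The fingerprint is $(v_s,\dots,v_2)$ and the container is $V\setminus(E(H^{(1)})\cup\bigcup_i X_i)$. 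The container depends only on the fingerprint, because the tuple reconstructs every link. Containment $I\subseteq C(f(I))$ follows from two facts: each $X_i$ misses $I$ by maximality of $v_i$, and the $1$-edges of $H^{(1)}$ miss $I$ by independence.

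The main obstacle is the shrinkage bound. Write $n_i=n-s+i$ and $m_i=e(H^{(i)})$, so that $m_{i-1}=\deg_{H^{(i)}}(v_i)$. Let $k$ be the least index with $k=1$ or $m_{k-1}<\frac{k-1}{n_k-1}m_k$. Telescoping the reverse inequalities for $j>k$, together with $m_s\ge\delta\binom ns$, should give $m_k\ge\frac{\delta n}{s}\binom{n_k-1}{k-1}$. The key computation here is the identity $\binom ns\prod_{j=k+1}^s\frac{j-1}{n_j-1}=\frac{n}{s}\binom{n_k-1}{k-1}$, up to an inequality in the right direction.

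If $k=1$, this already says $|E(H^{(1)})|\ge\delta n/s$. If $k\ge2$, consider the set $Y$ of vertices of degree exceeding $\frac{k-1}{n_k-1}m_k$. A degree-sum argument shows $|Y|\ge m_k/\binom{n_k-1}{k-1}$: each vertex has degree at most $\binom{n_k-1}{k-1}$, and the vertices outside $Y$ contribute at most $(k-1)m_k$ to the total degree sum $km_k$. By the choice of $k$, $\deg(v_k)$ lies below the threshold, so $Y\subseteq X_k$, and hence $|X_k|\ge\delta n/s$. In either case the container has size at most $(1-\delta/s)n$. Taking $\cF$ to be the image of $f$ completes the plan.
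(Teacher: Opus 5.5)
Your proposal is correct and follows the paper's own proof: the same case split ($s=1$, $n=s$, and small independent sets when $n>s$), the same maximum-degree pivot through successive links with container $V\setminus(E(H^{(1)})\cup\bigcup_i X_i)$, and the same threshold index $k$, telescoping identity, and degree-sum bound on $Y$. Two small points need fixing. First, $k$ should be the \emph{largest} index in $[s]$ with $k=1$ or $m_{k-1}<\frac{k-1}{n_k-1}m_k$; read literally, ``least'' always gives $k=1$ and the telescoping over $j>k$ is unjustified (the paper's text has the same slip). Second, bounding the contribution from outside $Y$ by $(k-1)m_k$ requires $Y\neq\emptyset$, which holds because $n_k\geq k+1$ makes the average degree $km_k/n_k$ exceed the threshold; this is exactly where $n>s$ is used.
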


\begin{proof}
This is precisely the ordered pivot argument in the first implication of
Theorem~\ref{thm:removal-container-equivalence}.  That proof begins with an
arbitrary dense $s$-uniform hypergraph, chooses successive maximum-degree
pivots from a fixed independent set, passes to links, and obtains a container
omitting at least $\delta n/s$ vertices.  No property of the host graph is used
at any point; the graph enters there only to define the obstruction
hypergraph.  Therefore the same construction, with $H$ as the initial
hypergraph, gives the asserted maps.
\end{proof}

The reverse implication is also purely hypergraph-theoretic.

\begin{lem}[Counting from ordered containers]
\label{lem:counting-from-containers}
Suppose that an $n$-vertex hypergraph $H$ admits maps
\(f:\cI(H)\to\bigcup_{j=0}^{q}V_{\neq}^{j}, C:\operatorname{im}(f)\to\binom{V}{\leq(1-\eta)n},\)
such that every entry of $f(I)$ belongs to $I$ and
$I\subseteq C(f(I))$.  Then, for every $q\leq t\leq n$,
\begin{align}
\frac{|\cI(H)\cap\binom{V}{t}|}{\binom{n}{t}}
&\leq
\sum_{j=0}^{q}(t)_j
\frac{\binom{\lfloor(1-\eta)n\rfloor-j}{t-j}}
{\binom{n-j}{t-j}}                                      \label{eq:relational-exact-counting}\\
&\leq\sum_{j=0}^{q}t^j(1-\eta)^{t-j}.                       \label{eq:relational-rough-counting}
\end{align}
Consequently, there is an absolute constant $C>0$ such that the right-hand
side of \eqref{eq:relational-rough-counting} is at most $1/3$ whenever
\begin{equation}
\label{eq:relational-sampling-threshold}
t\geq
\frac{C(q+1)}{\eta}
\log\left(\frac{q+1}{\eta}+2\right).
\end{equation}
\end{lem}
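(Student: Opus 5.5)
The plan is to prove Lemma~\ref{lem:counting-from-containers} in two stages: first a union bound over fingerprints for the exact inequality \eqref{eq:relational-exact-counting}, then elementary ratio estimates for \eqref{eq:relational-rough-counting} and the threshold \eqref{eq:relational-sampling-threshold}.

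\textbf{Exact count.} Write $m=\lfloor(1-\eta)n\rfloor$. Every independent $t$-set $I$ has a fingerprint $F=f(I)$, an ordered $j$-tuple with $j\le q$. All entries of $F$ lie in $I$, and $I\subseteq C(F)$ with $|C(F)|\le m$. So $I$ is determined by $F$ together with the $(t-j)$-subset $I\setminus F$ of $C(F)\setminus F$.

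For each $j$ there are at most $(n)_j$ possible tuples $F$. Summing over $F$ gives
\[
|\cI(H)\cap\tbinom{V}{t}|\le\sum_{j=0}^{q}(n)_j\binom{m-j}{t-j}.
\]
Dividing by $\binom nt$ and using the identity $(n)_j/\binom nt=(t)_j/\binom{n-j}{t-j}$ yields \eqref{eq:relational-exact-counting}. This is the same computation as in the converse part of Theorem~\ref{thm:removal-container-equivalence}. Terms with $t-j>m-j$ vanish by the stated convention on binomial coefficients.

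\textbf{Rough bound.} First, $(t)_j\le t^j$. Second,
\[
\frac{\binom{m-j}{t-j}}{\binom{n-j}{t-j}}=\prod_{i=0}^{t-j-1}\frac{m-j-i}{n-j-i}.
\]
Since $m\le n$, each factor satisfies $\frac{m-j-i}{n-j-i}\le\frac mn\le 1-\eta$, whenever the numerator is nonnegative. If some numerator is negative, the binomial is zero and the bound holds trivially. This gives \eqref{eq:relational-rough-counting}.

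\textbf{Threshold.} Bound $\sum_{j=0}^q t^j(1-\eta)^{t-j}\le (q+1)\,t^q(1-\eta)^{t-q}$, using $t\ge1$ for $q\ge1$; the case $q=0$ is trivial. Then use $(1-\eta)^{t-q}\le e^{-\eta(t-q)}$. It suffices to have
\[
\eta(t-q)\ge q\log t+\log(3(q+1)).
\]
Put $L=\log(\frac{q+1}{\eta}+2)$ and take $t=C(q+1)L/\eta$. Note $q\le t/2$ once $C\ge 2$, because $L/\eta\ge1$. So the left side is at least $\eta t/2=C(q+1)L/2$.

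On the right, $\log t=\log C+\log\frac{q+1}{\eta}+\log L\le \log C+2L$, since $\log L\le L$. Hence the right side is at most $q(\log C+2L)+\log 3+L$, which is $O((q+1)L)$ with constant independent of $C$ apart from the $q\log C$ term. Because $L\ge\log2$, that term is dominated once $C$ is large.

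For larger $t$ the expression only decreases: $t^q(1-\eta)^{t-q}$ is decreasing in $t$ beyond $t\approx q/\eta$. Alternatively, one can rerun the same inequality with $t$ in place of the specific value.

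The main obstacle is this last absolute-constant calculation. The cleanest route is to verify that the map $t\mapsto \eta t/2-q\log t$ is increasing for $t\ge 2q/\eta$. That monotonicity makes the bound for every $t$ satisfying \eqref{eq:relational-sampling-threshold} follow from the single value above.
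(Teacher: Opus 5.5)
Your proposal is correct and follows the paper's proof: you take the same union bound over fingerprints (phrased as a count, where the paper uses $\Pr[F\subseteq T\subseteq C(F)]$ for a uniform $t$-set $T$), the same identity $(n)_j/\binom nt=(t)_j/\binom{n-j}{t-j}$, the same factor-by-factor ratio bound, and the same use of $1-\eta\le e^{-\eta}$ for the threshold. Your explicit check at the smallest admissible $t$, together with the monotonicity of $t\mapsto \eta t/2-q\log t$ for $t\ge 2q/\eta$, fills in the step the paper leaves as ``with a sufficiently large absolute constant.''
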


\begin{proof}
Choose $T\in\binom{V}{t}$ uniformly.  If $T$ is independent and its fingerprint
has length $j$, then $f(T)\subseteq T\subseteq C(f(T))$.  For a fixed ordered
tuple $F$ of length $j$,
\(\Pr[F\subseteq T\subseteq C(F)] =\frac{\binom{|C(F)|-j}{t-j}}{\binom{n}{t}}.\)
There are at most $(n)_j$ possible ordered tuples of length $j$.  Summing over
$j$ gives \eqref{eq:relational-exact-counting}, using
\(\frac{(n)_j}{\binom{n}{t}} =\frac{(t)_j}{\binom{n-j}{t-j}}.\)
Moreover,
\(\frac{\binom{\lfloor(1-\eta)n\rfloor-j}{t-j}} {\binom{n-j}{t-j}} \leq(1-\eta)^{t-j},  (t)_j\leq t^j,\)
which proves \eqref{eq:relational-rough-counting}.  Finally, using
$1-\eta\leq e^{-\eta}$, each summand is at most
$\exp(j\log t-\eta(t-j))$.  If $t$ satisfies
\eqref{eq:relational-sampling-threshold} with a sufficiently large absolute
constant, then the sum of the $q+1$ terms is at most $1/3$.
\end{proof}

We now formulate the relational version of the ordered-container condition.

\begin{defn}[Ordered container lemma for a hereditary relational property]
Let $\cP$ be a hereditary $\tau$-property.  For
$\varepsilon,\eta>0$ and $N,q\in\N$, we say that $\cP$ admits an
$(\varepsilon,\eta,N,q)$-ordered container lemma if, for every
$n\geq N$ and every $n$-vertex $\tau$-structure $\cA$ that is
$\varepsilon$-far from $\cP$, there are maps
\(f:\{U\subseteq V(\cA):\cA[U]\in\cP\} \longrightarrow\bigcup_{j=0}^{q}V(\cA)_{\neq}^{j}\)
and
\(C:\operatorname{im}(f) \longrightarrow\binom{V(\cA)}{\leq(1-\eta)n}\)
such that every entry of $f(U)$ belongs to $U$ and
$U\subseteq C(f(U))$.
\end{defn}

\begin{thm}[Container characterization for hereditary relational structures]
\label{thm:relational-hereditary-characterization}
Let $\cP$ be a hereditary property of finite $\tau$-structures.
\begin{enumerate}
\item If $\cP$ has a one-sided induced-sampling tester with sample
complexity $s(\varepsilon)$, then, for every $\varepsilon>0$, it admits an
ordered container lemma with
\(N(\varepsilon)=s(\varepsilon)\), \(q(\varepsilon)=s(\varepsilon)-1\), and \(\eta(\varepsilon)={2}/{(3s(\varepsilon))}\).

\item Conversely, suppose that $\cP$ admits ordered container lemmas
with parameter functions $\eta$, $N$, and $q$.  Then it has a one-sided
induced-sampling tester with sample complexity at most
\begin{equation}
\label{eq:relational-hereditary-sample}
T_{\cP}(\varepsilon)
=C\max\left\{
N(\varepsilon),
\frac{q(\varepsilon)+1}{\eta(\varepsilon)}
\log\left(
\frac{q(\varepsilon)+1}{\eta(\varepsilon)}+2
\right)
\right\},
\end{equation}
where $C$ is an absolute constant.
\end{enumerate}
\end{thm}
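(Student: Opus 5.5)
The plan is to mirror the graph proof of Theorem~\ref{thm:hereditary-characterization}, replacing the graph-specific inputs by Lemma~\ref{lem:relational-membership-rule}, Lemma~\ref{lem:dense-ordered-containers} and Lemma~\ref{lem:counting-from-containers}. Throughout we fix $\varepsilon>0$ and suppress the dependence of the parameters on it.

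For part (1), I would first apply Lemma~\ref{lem:relational-membership-rule}. This lets us assume that the tester, with sample size $s=s(\varepsilon)$, rejects exactly when $\cA[S]\notin\cP$; the modified tester remains one-sided and is still sound. Now take any $n$-vertex $\cA$ with $n\geq N=s$ that is $\varepsilon$-far from $\cP$. Soundness says that at least $\frac23\binom ns$ of the sets $S\in\binom{V(\cA)}{s}$ satisfy $\cA[S]\notin\cP$. Let $H$ be the $s$-uniform hypergraph on $V(\cA)$ whose edges are these bad sets, so $e(H)\geq\frac23\binom ns$. By heredity, every $U$ with $\cA[U]\in\cP$ contains no bad $s$-set, and hence $U\in\cI(H)$. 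Lemma~\ref{lem:dense-ordered-containers} with $\delta=2/3$ then gives fingerprints of length at most $s-1$ and containers of size at most $(1-\frac{2}{3s})n$. Restricting $f$ to the sets $U$ with $\cA[U]\in\cP$ yields exactly the claimed parameters. The side condition $s\leq n$ in the lemma holds because $n\geq N=s$.

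For part (2), I would set $t=\lceil C'(q+1)\eta^{-1}\log((q+1)/\eta+2)\rceil$, where $C'$ is the constant from Lemma~\ref{lem:counting-from-containers}, and $T=\max\{N,t\}$. The tester samples $T$ vertices and rejects iff the revealed induced structure is not in $\cP$. Heredity makes it one-sided, and on inputs with $n<T$ it decides membership exactly. For $n\geq T$ and $\cA$ $\varepsilon$-far, I would again form the hypergraph whose edges are the sets $U$ with $\cA[U]\notin\cP$. This can be taken non-uniform, or the argument can work directly with the family $\{U:\cA[U]\in\cP\}$. The counting in Lemma~\ref{lem:counting-from-containers} uses only that the family of good sets admits the fingerprint/container maps, so it applies verbatim and shows that a uniformly random $t$-set is good with probability at most $1/3$. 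To pass to $T$-sets, I would couple: a uniform $T$-set contains a uniform $t$-subset, and by heredity a good $T$-set contains only good $t$-sets. Hence the $T$-set is good with probability at most $1/3$. The sample complexity is $T\leq C\max\{N,(q+1)\eta^{-1}\log(\cdot)\}$.

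I expect no real obstacle; the delicate points are bookkeeping. In part (1) the issue is the uniformity and range conditions of Lemma~\ref{lem:dense-ordered-containers}. In part (2) two checks are needed. First, the requirement $q\leq t\leq n$ holds since $t\geq q+1$ and $n\geq T\geq t$. Second, Lemma~\ref{lem:counting-from-containers} is phrased for $\cI(H)$ but is really a statement about any family of sets equipped with fingerprint/container maps. I would note this explicitly, since the good sets form a down-closed family and the proof of the lemma uses nothing else.
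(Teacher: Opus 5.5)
Your proposal is correct and follows the paper's proof essentially step for step. In part (1) you use the membership rule, the $s$-uniform obstruction hypergraph of density at least $2/3$, and the dense ordered-container lemma; in part (2) you use the counting lemma applied to the family of good sets, plus heredity for one-sidedness. The only difference is cosmetic: the paper applies the counting lemma directly at $t=T$, which is legitimate because $T$ exceeds the sampling threshold, rather than coupling a random $t$-subset inside the $T$-sample as in the graph proof. Your remark that the counting lemma uses only the fingerprint/container maps on the down-closed family of good sets makes explicit a point the paper leaves implicit.
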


\begin{proof}
Fix $\varepsilon>0$ and write $s=s(\varepsilon)$.  Let $\cA$ be an
$n$-vertex structure, where $n\geq s$, that is $\varepsilon$-far from $\cP$.
By Lemma~\ref{lem:relational-membership-rule}, at least a $2/3$-fraction of
the $s$-subsets $S\subseteq V(\cA)$ satisfy $\cA[S]\notin\cP$.  Define an
$s$-uniform obstruction hypergraph $H_{\cA}$ on $V(\cA)$ by
\(E(H_{\cA}) =\{S\in\tbinom{V(\cA)}s:\cA[S]\notin\cP\}.\)
Every $U\subseteq V(\cA)$ with $\cA[U]\in\cP$ is independent in
$H_{\cA}$, by heredity.  Since
$e(H_{\cA})\geq(2/3)\binom{n}{s}$, Lemma~\ref{lem:dense-ordered-containers}
gives fingerprints of length at most $s-1$ and containers of size at most
$(1-2/(3s))n$.  This proves the first assertion.

Conversely, fix $\varepsilon>0$ and choose $T=T_{\cP}(\varepsilon)$ with the
constant large enough that Lemma~\ref{lem:counting-from-containers} applies.
If the input has fewer than $T$ vertices, inspect it completely.  Otherwise,
choose a uniformly random $T$-subset $S$, reveal $\cA[S]$, and reject exactly
when $\cA[S]\notin\cP$.  Heredity gives one-sidedness.  If $\cA$ is
$\varepsilon$-far and has at least $T\geq N(\varepsilon)$ vertices, apply the
assumed ordered container lemma to the family of sets $U$ with
$\cA[U]\in\cP$.  Lemma~\ref{lem:counting-from-containers} then shows that
\(\Pr[\cA[S]\in\cP]\leq\frac13.\)
Thus the tester rejects every $\varepsilon$-far input with probability at
least $2/3$.
\end{proof}

Combining this theorem with canonicalization gives the query-complexity form.

\begin{cor}[Relation-query formulation]
\label{cor:relational-query-characterization}
Let $\tau$ have maximum arity $r$, and let $\cP$ be hereditary.
\begin{enumerate}
\item If $\cP$ has a size-oblivious one-sided tester making at most
$Q(\varepsilon)$ relation queries, then, for constants
$c_\tau,C_\tau>0$, it admits ordered-container parameters satisfying
\(N(\varepsilon),q(\varepsilon) \leq C_\tau Q(c_\tau\varepsilon), \qquad \eta(\varepsilon) \geq\frac{1}{C_\tau Q(c_\tau\varepsilon)}.\)

\item Conversely, ordered-container parameters $(\eta,N,q)$ give a
size-oblivious one-sided tester with relation-query complexity
\[
O_\tau\!\left(
\max\left\{
N(\varepsilon)^r,
\left(
\frac{q(\varepsilon)+1}{\eta(\varepsilon)}
\log\left(
\frac{q(\varepsilon)+1}{\eta(\varepsilon)}+2
\right)
\right)^r
\right\}
\right).
\]
\end{enumerate}
\end{cor}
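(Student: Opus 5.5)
The corollary follows by combining Theorem~\ref{thm:relational-canonicalization} with Theorem~\ref{thm:relational-hereditary-characterization}, and then converting sample complexity into relation-query complexity through \eqref{eq:relation-query-cost}. Each direction is a direct composition, so most of the work is tracking constants that depend only on $\tau$.

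For the first assertion, suppose $\cP$ has a size-oblivious one-sided tester making at most $Q(\varepsilon)$ relation queries. Theorem~\ref{thm:relational-canonicalization} then gives a one-sided induced-sampling tester with sample complexity $s(\varepsilon)\leq C'_\tau Q(c_\tau\varepsilon)$. Because $\cP$ is hereditary, this tester may be taken to reject exactly when the sampled structure lies outside $\cP$. We may assume $s(\varepsilon)\geq1$ is an integer by rounding up.

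Part (1) of Theorem~\ref{thm:relational-hereditary-characterization} now yields container parameters
\[
N(\varepsilon)=s(\varepsilon),\qquad q(\varepsilon)=s(\varepsilon)-1,\qquad \eta(\varepsilon)=\frac{2}{3s(\varepsilon)}.
\]
Set $C_\tau=\max\{C'_\tau,\tfrac32 C'_\tau\}$. Then
\[
N(\varepsilon),\,q(\varepsilon)\leq C_\tau Q(c_\tau\varepsilon)
\qquad\text{and}\qquad
\eta(\varepsilon)\geq \frac{2}{3C'_\tau Q(c_\tau\varepsilon)}\geq \frac{1}{C_\tau Q(c_\tau\varepsilon)}.
\]
One subtlety is that $Q\geq1$ is needed for these inequalities to be meaningful. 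If $Q$ could vanish, the bound would be replaced by $\max\{Q,1\}$, which is absorbed into the constants.

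For the converse, part (2) of Theorem~\ref{thm:relational-hereditary-characterization} gives a one-sided induced-sampling tester with sample size $T=T_{\cP}(\varepsilon)$ as in \eqref{eq:relational-hereditary-sample}. This is realized as a size-oblivious relation-query tester in two steps:

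\begin{enumerate}
\item Draw $T$ labels from the uniform sampling oracle and discard repeats. Under the standing convention for requests exceeding the input size, this either produces a uniformly random $T$-set or reveals the whole input.
\item Query every relation on every tuple of the sampled labels.
\end{enumerate}

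By \eqref{eq:relation-query-cost}, this costs at most $|\tau|T^r$ queries. Since
\[
T^r\leq C^r\max\left\{N(\varepsilon)^r,\left(\frac{q(\varepsilon)+1}{\eta(\varepsilon)}\log\left(\frac{q(\varepsilon)+1}{\eta(\varepsilon)}+2\right)\right)^r\right\},
\]
the stated bound $O_\tau(\cdot)$ follows with the implied constant $|\tau|C^r$.

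The only potential obstacle is realizing the uniform $T$-subset through the sampling oracle when duplicates occur. I would handle this exactly as in the canonicalization convention adopted in the paper, where a sample larger than the input is interpreted as inspecting the whole input, so it adds no new difficulty.
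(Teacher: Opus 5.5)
Your proposal is correct and follows the paper's proof: the forward direction composes Theorem~\ref{thm:relational-canonicalization} with Theorem~\ref{thm:relational-hereditary-characterization}(1). The converse runs the induced-sampling tester of Theorem~\ref{thm:relational-hereditary-characterization}(2), reveals every relation on the sample, and bounds the cost by \eqref{eq:relation-query-cost}. One small point is that ``discard repeats'' on its own could leave fewer than $T$ distinct vertices, but the paper's sampling convention (a uniformly random $T$-set, or the whole input when $n<T$), which you invoke, covers this.
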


\begin{proof}
The first implication follows from
Theorem~\ref{thm:relational-canonicalization} and
Theorem~\ref{thm:relational-hereditary-characterization}(1).  For the reverse
implication, use the induced-sampling tester from
Theorem~\ref{thm:relational-hereditary-characterization}(2) and reveal every
relation on the sampled set.  The query bound follows from
\eqref{eq:relation-query-cost}.
\end{proof}

In particular, for every fixed bounded-arity signature, polynomial one-sided
induced-sampling complexity, polynomial relation-query complexity, and
polynomial ordered-container parameters are equivalent.

\subsection{Arbitrary properties and semi-hereditary envelopes}

The passage from hereditary to arbitrary properties is also independent of
the signature.

\begin{defn}[Semi-hereditary relational property]
A $\tau$-property $\cP$ is \textit{semi-hereditary} if there are a hereditary
property $\cH\supseteq\cP$ and a function $M:(0,1)\to\N$ such that, for every
$\varepsilon>0$, every $\cA\in\cH$ with
$|V(\cA)|\geq M(\varepsilon)$ is $\varepsilon$-close to $\cP$.  We call
$(\cH,M)$ a semi-hereditary witness.
\end{defn}

\begin{thm}[Container characterization for arbitrary relational properties]
\label{thm:relational-semi-characterization}
Let $\cP$ be a property of finite $\tau$-structures.
\begin{enumerate}
\item Suppose that $\cP$ has an oblivious one-sided induced-sampling
tester with sample complexity $s(\varepsilon)$.  Then $\cP$ has a
semi-hereditary witness $(\cH,M)$ satisfying
\(M(\varepsilon)\leq s(\varepsilon)\).
The same tester tests $\cH$, and $\cH$ admits ordered container parameters
\(N(\varepsilon)=s(\varepsilon)\), \( q(\varepsilon)=s(\varepsilon)-1\), and \(\eta(\varepsilon)={2}/{(3s(\varepsilon))}\).

\item Conversely, suppose that $\cP$ has a semi-hereditary witness
$(\cH,M)$ and that $\cH$ admits ordered container parameters
$(\eta,N,q)$.  Then $\cP$ has an oblivious one-sided induced-sampling
tester with sample complexity
\begin{equation}
\label{eq:relational-semi-sample}
O\!\left(
\max\left\{
M(\varepsilon/2),
N(\varepsilon/2),
\frac{q(\varepsilon/2)+1}{\eta(\varepsilon/2)}
\log\left(
\frac{q(\varepsilon/2)+1}{\eta(\varepsilon/2)}+2
\right)
\right\}
\right).
\end{equation}
\end{enumerate}
\end{thm}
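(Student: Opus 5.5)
The proof transfers the graph argument of Corollary~\ref{cor:full-characterization} to an arbitrary signature, with Theorem~\ref{thm:relational-hereditary-characterization} in place of Theorem~\ref{thm:hereditary-characterization}. Since we start from an induced-sampling tester, no canonicalization is needed, and the constants are exact.

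For part (1), fix for each $\varepsilon$ the given tester, with sample size $s(\varepsilon)$ and rejection family $\cR_\varepsilon$. Let $\cB=\bigcup_{\varepsilon}\cR_\varepsilon$, and let $\cH$ be the class of structures containing no member of $\cB$ as an induced substructure. This class is hereditary and closed under isomorphism.

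\emph{Containment $\cP\subseteq\cH$.} Suppose some $\cA\in\cP$ contained an induced copy of some $\cF\in\cR_\varepsilon$. Then $|V(\cA)|\geq s(\varepsilon)$, and the tester run on $\cA$ would sample that copy with positive probability and reject. This contradicts one-sidedness.

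\emph{The same tester tests $\cH$.} It never rejects a member of $\cH$, since every possible rejection type is forbidden in $\cH$. An input that is $\varepsilon$-far from $\cH$ is also $\varepsilon$-far from $\cP$, because $\cP\subseteq\cH$, so it is rejected with probability at least $2/3$. Inputs with fewer than $s(\varepsilon)$ vertices are inspected exactly. Here we take that inspection to reject exactly the non-members of $\cH$; this is permitted because the tester only needs soundness for far inputs.

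\emph{The threshold $M(\varepsilon)\leq s(\varepsilon)$.} Suppose $\cA\in\cH$ has $|V(\cA)|\geq s(\varepsilon)$ and is $\varepsilon$-far from $\cP$. Then the tester rejects with probability at least $2/3>0$, so some sample is a member of $\cR_\varepsilon\subseteq\cB$ induced in $\cA$, which is impossible. Hence $(\cH,s)$ is a semi-hereditary witness.

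\emph{The container parameters.} Apply Theorem~\ref{thm:relational-hereditary-characterization}(1) to $\cH$ with this tester. This gives $N=s$, $q=s-1$ and $\eta=2/(3s)$.

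For part (2), let $T$ be the maximum in \eqref{eq:relational-semi-sample} with a large absolute constant. If the input has fewer than $T$ vertices, inspect it fully and decide membership in $\cP$. Otherwise, sample a uniformly random $T$-set $S$ and reject exactly when $\cA[S]\notin\cH$.

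\emph{One-sidedness.} It follows from $\cP\subseteq\cH$ and the heredity of $\cH$.

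\emph{Soundness.} Let $\cA$ have $n\geq T$ vertices and be $\varepsilon$-far from $\cP$. Suppose $\cA$ were $\varepsilon/2$-close to $\cH$, via some $\cB\in\cH$ on the same vertex set. Since $n\geq M(\varepsilon/2)$, the structure $\cB$ is $\varepsilon/2$-close to $\cP$. The distance $d_\tau$ of \eqref{eq:relational-distance} is a normalized sum of symmetric-difference sizes, so it satisfies the triangle inequality. This would make $\cA$ $\varepsilon$-close to $\cP$, a contradiction. So $\cA$ is $\varepsilon/2$-far from $\cH$. Since $T\geq N(\varepsilon/2)$ and $T$ exceeds the threshold \eqref{eq:relational-sampling-threshold} for parameters at $\varepsilon/2$, the argument of Theorem~\ref{thm:relational-hereditary-characterization}(2), which rests on Lemma~\ref{lem:counting-from-containers}, gives $\Pr[\cA[S]\in\cH]\leq 1/3$.

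One point needs care. The tester must be an induced-sampling tester whose rejection family is a set of isomorphism types on exactly $T$ vertices, and rejecting exactly the types outside $\cH$ is such a family. The boundary case $n=T$ must be handled consistently. I expect this, together with checking that $\varepsilon$-farness transfers through the witness, to be the only delicate step. Everything else is a direct citation of the earlier results.
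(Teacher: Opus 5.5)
Your proposal is correct and follows the paper's proof essentially step for step. Both build the hereditary envelope $\cH$ by forbidding all rejection types, use positive-probability arguments for $\cP\subseteq\cH$ and $M\le s$, invoke Theorem~\ref{thm:relational-hereditary-characterization}(1) for the container parameters, and, conversely, transfer $\varepsilon$-farness from $\cP$ to $\varepsilon/2$-farness from $\cH$ by the triangle inequality before applying Lemma~\ref{lem:counting-from-containers}; your explicit handling of inputs below $s(\varepsilon)$ (deciding membership in $\cH$ rather than $\cP$) and your single sample size $T=\max\{M(\varepsilon/2),\dots\}$ are slightly more careful than the paper's wording but do not change the argument.
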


\begin{proof}
Fix the given one-sided induced-sampling tester.  Let $\cB$ be the family of
all finite $\tau$-structures that can occur as a rejection pattern for this
tester at some proximity parameter, and let $\cH$ be the property of
containing no member of $\cB$ as an induced substructure.  Then $\cH$ is
hereditary.  One-sidedness implies $\cP\subseteq\cH$: otherwise, an input in
$\cP$ containing a rejection pattern would be rejected with positive
probability.

For each fixed $\varepsilon$, the original tester is one-sided for $\cH$,
since no induced substructure of an input in $\cH$ is a rejection pattern.  If
an input is $\varepsilon$-far from $\cH$, then it is also
$\varepsilon$-far from $\cP$, because $\cP\subseteq\cH$, and hence the tester
rejects with probability at least $2/3$.  Thus the same tester tests $\cH$.

We next verify semi-hereditariness.  Suppose that $\cA\in\cH$,
$|V(\cA)|\geq s(\varepsilon)$, and $\cA$ is $\varepsilon$-far from $\cP$.
The tester must reject with positive probability, so some sampled
$s(\varepsilon)$-vertex induced substructure belongs to $\cB$, contradicting
$\cA\in\cH$.  Hence one may take $M(\varepsilon)=s(\varepsilon)$.  Applying
Theorem~\ref{thm:relational-hereditary-characterization}(1) to $\cH$ gives the
container parameters in the first assertion.

Conversely, fix $\varepsilon>0$.  If the input has fewer than
$M(\varepsilon/2)$ vertices, inspect it completely.  Otherwise run the tester
for $\cH$ supplied by
Theorem~\ref{thm:relational-hereditary-characterization}(2), at proximity
parameter $\varepsilon/2$.  Inputs in $\cP$ are accepted because
$\cP\subseteq\cH$.  If $\cA$ is $\varepsilon$-far from $\cP$ and has at least
$M(\varepsilon/2)$ vertices, then it is $\varepsilon/2$-far from $\cH$.
Indeed, otherwise there would be a structure $\cA'\in\cH$ on the same vertex
set with $d_\tau(\cA,\cA')\leq\varepsilon/2$; semi-hereditariness would make
$\cA'$ $\varepsilon/2$-close to $\cP$, and the triangle inequality would
contradict the assumption on $\cA$.  Therefore the tester for $\cH$ rejects
with probability at least $2/3$, and the sample bound is
\eqref{eq:relational-semi-sample}.
\end{proof}

Together with Theorem~\ref{thm:relational-canonicalization} and
\eqref{eq:relation-query-cost}, this gives a query-complexity characterization
of arbitrary bounded-arity properties.  The quantitative information is
carried jointly by the ordered-container parameters of the hereditary envelope
and by the threshold $M$, which measures the difference between the original
property and that envelope.

\part{Applications}

The preceding parts identify ordered containers as the common quantitative structure behind one-sided testability.  We now use that structure in three directions.  The first two applications depend only on the binary nature of the ambient relations, which makes the edit cost of changing a small exceptional vertex set quadratic.  The third applies directly to every graph property covered by the characterization.
\begin{rem}
One may ask whether the applications below can be deduced directly from the container characterization. This is possible, but it is convenient to use a slightly stronger version of the characterization in which, in addition to containing every induced $\Pi$-subgraph, each container itself induces a graph that is close to $\Pi$. Such containers can be obtained by iterating the container construction until the auxiliary hypergraph of local obstructions becomes sparse. The resulting statement has somewhat weaker quantitative bounds, since the longer procedure requires larger fingerprints, but it makes the applications below essentially formal.
\end{rem}
\section{Partition properties}
\label{sec:partition-applications}

Given properties $\cP_1,\dots,\cP_k$ of structures in a common binary signature, we write $\Part(\cP_1,\dots,\cP_k)$ for the property of admitting a vertex partition whose $i$th part induces a member of $\cP_i$.  The next theorem gives the signature-independent closure principle.

\subsection{Binary relational structures}

For $\tau$-properties $\cP_1,\dots,\cP_k$, let
$\Part(\cP_1,\dots,\cP_k)$ consist of the structures $\cA$ admitting a
partition $V(\cA)=V_1\cup\cdots\cup V_k$ with
$\cA[V_i]\in\cP_i$ for every $i$.  Relation tuples meeting two different
parts are unrestricted.

\begin{thm}[Partition closure for binary signatures]
\label{thm:relational-partition-closure}
Let every symbol of $\tau$ have arity two, and let
$\cP_1,\dots,\cP_k$ be nonempty hereditary and extendable
$\tau$-properties.  Suppose that $\cP_i$ has one-sided induced-sampling
complexity $S_i(\delta)$, and put
$S(\delta)=\max_iS_i(\delta)$.  Then
$\Part(\cP_1,\dots,\cP_k)$ has a one-sided induced-sampling tester with sample
complexity
\(\widetilde O\!\left( \frac{k}{\varepsilon} S\!\left(\frac{\varepsilon}{8k}\right)^2 \right).\)
Consequently, its relation-query complexity is the square of this quantity,
up to a constant depending on $\tau$.
\end{thm}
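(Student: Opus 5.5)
The tester samples a uniformly random set $S$ of $t=\widetilde O\!\left(\frac{k}{\varepsilon}S(\varepsilon/8k)^2\right)$ vertices and rejects exactly when $\cA[S]\notin\Part(\cP_1,\dots,\cP_k)$. Since every $\cP_i$ is hereditary, so is $\Part(\cP_1,\dots,\cP_k)$: restricting a good partition to $S$ gives a good partition of $\cA[S]$. Hence the tester is one-sided, and what remains is soundness. For this I will show that if $\cA$ is $\varepsilon$-far from the partition property and $n\geq t$, then with probability at least $2/3$ the sample $S$ admits no good partition. I will argue contrapositively through ordered containers: if $\cA[S]$ has a good partition $S=U_1\cup\dots\cup U_k$, then each $U_i$ induces a member of $\cP_i$, and each such $U_i$ carries a short fingerprint. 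I will take a union bound over the at most $k$ fingerprints, of total length at most $kq$, where $q=S(\delta)-1$ and $\delta=\varepsilon/8k$, in the spirit of Lemma~\ref{lem:counting-from-containers}.

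The key structural step is as follows. The containers of Theorem~\ref{thm:relational-hereditary-characterization}(1) exist only inside hosts that are $\delta$-far from $\cP_i$, so I will iterate the construction. Fix $i$ and $U\subseteq V$ with $\cA[U]\in\cP_i$. Set $W_0=V$. While $\cA[W_\ell]$ is $\delta$-far from $\cP_i$ and $|W_\ell|\geq\sqrt{\delta}\,n$ (or some similar threshold), apply Lemma~\ref{lem:dense-ordered-containers} to the obstruction hypergraph of $\cA[W_\ell]$. This yields a fingerprint in $U\cap W_\ell$ of length below $s=S_i(\delta)$ and a new container $W_{\ell+1}\supseteq U\cap W_\ell$ with $|W_{\ell+1}|\leq(1-2/3s)|W_\ell|$. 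After $O(s\log(1/\delta))$ rounds the process stops, leaving a final container $W^{(i)}(U)$ determined by a fingerprint of length $O(s^2\log(1/\delta))$ drawn from $U$. At that point either $\cA[W^{(i)}]$ is $\delta$-close to $\cP_i$, or $W^{(i)}$ is tiny.

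Now suppose the sample admits a good partition $U_1,\dots,U_k$, and let $W^{(i)}=W^{(i)}(U_i)$ be the resulting containers. I would like $\bigcup_i W^{(i)}$ to miss few vertices, since then $\cA$ would be close to the partition property. I would argue this by making the partition of $V$ assign each vertex to some $i$ with $v\in W^{(i)}$, fixing $\cA[W^{(i)}]$ at cost $\delta n^2$ per part, and putting the leftover set $R=V\setminus\bigcup W^{(i)}$, together with the tiny parts, into part $1$. Extendability of $\cP_1$ lets me rewrite the relations inside part $1$ so that it lies in $\cP_1$ at cost at most $2|R|n$. (Rewriting an induced substructure inside a nonempty extendable hereditary class is always possible: take any member of $\cP_1$ of the right size.) Overlaps between containers are handled by assigning each vertex arbitrarily. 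Hence if $\cA$ is $\varepsilon$-far, then $|R|\gtrsim\varepsilon n/4$ for \emph{every} choice of fingerprints. But $S\subseteq\bigcup_i W^{(i)}$, since $U_i\subseteq W^{(i)}$. So the sample is good only if, for some tuple of $k$ fingerprints contained in $S$, the remaining $t-O(kq')$ vertices of $S$ avoid a fixed set of size at least $\varepsilon n/4$. The union bound then gives
\[
\Pr[\text{accept}]\leq \sum_{j\leq kq'} t^{j}(1-\varepsilon/4)^{t-j},
\]
with $q'=O(S(\delta)^2\log(1/\delta))$. This is at most $1/3$ once $t=\widetilde O(kq'/\varepsilon)$, which matches the claimed bound.

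I expect the main obstacle to be the closeness bookkeeping for the small-container case. When $W^{(i)}$ is smaller than the threshold, it cannot be declared $\delta$-close, and it must be absorbed into $R$-type repairs. I will try the following: a part of size $m$ can be completely rewritten at cost at most $|\tau|m^2$, so choosing the threshold around $\sqrt{\varepsilon/8k}\,n$ makes these repairs total at most $\varepsilon n^2/8$. The price is that the number of shrinking rounds grows by a $\log(k/\varepsilon)$ factor, which is absorbed in the $\widetilde O$. I also need to check that a vertex in $R$ placed into part $1$ does not break $\cA[W^{(1)}]$'s closeness. Extendability resolves this: the fixed member of $\cP_1$ on $W^{(1)}$ extends to a member on $W^{(1)}\cup R$, at extra cost $O(|R|n)$. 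The normalization of $\varepsilon$ then fixes the constant $8k$.
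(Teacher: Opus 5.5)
Your proposal is correct and is essentially the paper's argument. It uses the same editing claim: containers that are each $\varepsilon/8k$-close or smaller than $\sqrt{\varepsilon/8k}\,n$ must leave an uncovered set of order $\varepsilon n$, by heredity, quadratic rewriting of tiny parts, and extendability for the leftover. It then uses the same iterated dense ordered-container lemma with $O(s\log(k/\varepsilon))$ rounds per part, and the same union bound over $\exp(\widetilde O(ks^2\log(k/\varepsilon)))$ transcripts. The one difference is organizational: you run the $k$ iterations independently until each container is close or tiny (the ``stronger containers'' of the remark opening Part~III) and then apply the claim, whereas the paper runs one joint iteration that stops once the union is small, shrinking the least index the claim supplies. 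Do keep tiny parts as separate parts rewritten in place, as in your last paragraph, rather than merging them into part~$1$ as your second paragraph says, since merging would cost linearly in their size.
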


\begin{proof}
Set
\(\gamma=\frac{\varepsilon}{8k}\),
\(\alpha=\sqrt{\frac{\varepsilon}{8k}}\),
\(\lambda=\frac{\varepsilon}{12}\), and \(s=S(\gamma)\). 

By increasing $s$ if necessary, we may assume that, for every $i$, every
structure that is $\gamma$-far from $\cP_i$ has at least a $2/3$-fraction of
bad induced $s$-substructures whenever its order is at least $s$.  Indeed, if a
larger sampled structure belonged to $\cP_i$, then all of its induced
substructures of the original sample size would belong to $\cP_i$ by
heredity.  Inputs below
any fixed threshold depending on $k,\varepsilon$, and $s$ will be inspected
completely, so throughout the structural argument we assume
$n\geq s/\alpha$.

Let $\cA$ be $\varepsilon$-far from
$\Part(\cP_1,\dots,\cP_k)$.  We first claim that whenever
$U_1,\dots,U_k\subseteq V(\cA)$ satisfy
$|\bigcup_iU_i|>(1-\lambda)n$, some $i$ satisfies
\[
|U_i|\geq\alpha n
\qquad\text{and}\qquad
\cA[U_i]\text{ is }\gamma\text{-far from }\cP_i.
\tag{\ref{thm:relational-partition-closure}.1}
\]
Suppose not.  Choose disjoint sets $V_i\subseteq U_i$ that partition
$U=\bigcup_iU_i$.  If $|U_i|\geq\alpha n$, edit $\cA[U_i]$ into a member of
$\cP_i$ and restrict the resulting structure to $V_i$; heredity shows that the
restriction still belongs to $\cP_i$, and the total normalized cost over all
such indices is at most $k\gamma=\varepsilon/8$.  If
$|U_i|<\alpha n$, replace the complete structure on $V_i$ by a member of
$\cP_i$ of the same order.  Such a member exists because $\cP_i$ is nonempty,
hereditary, and extendable, and the total cost is at most
$k\alpha^2=\varepsilon/8$.  Finally, assign the leftover set
$L=V(\cA)\setminus U$ to one part and use extendability.  Only relation tuples
meeting $L$ need be changed, at normalized cost at most
$2|L|/n+(|L|/n)^2<3\lambda=\varepsilon/4$.  The total cost is less than
$\varepsilon$, a contradiction.  This proves the claim.

For each $i$, let $H_i$ be the $s$-uniform hypergraph on $V(\cA)$ whose edges
are the $s$-sets inducing structures outside $\cP_i$.  Fix an arbitrary total
order of the vertex set and use the canonical maps furnished by the proof of
Lemma~\ref{lem:dense-ordered-containers}.  Let $W$ satisfy the partition
property and fix a witnessing partition
$W=W_1\cup\cdots\cup W_k$.  Initialize $C_i^{(0)}=V(\cA)$ for every $i$.
While the union of the current containers has size greater than
$(1-\lambda)n$, the claim gives an index $i$ for which
$|C_i|\geq\alpha n$ and $\cA[C_i]$ is $\gamma$-far from $\cP_i$.  Choose the
least such index.  Then $H_i[C_i]$ has density at least $2/3$, and $W_i$ is an
independent set in this hypergraph.  Applying
Lemma~\ref{lem:dense-ordered-containers} inside $C_i$ replaces $C_i$ by a set
containing $W_i$ and of size at most
\((1-2/(3s))|C_i|\), while recording an ordered block of at most $s-1$
vertices of $W_i$.

An index can be selected at most
\(R_0= 1+\left\lceil \frac{\log(1/\alpha)}{-\log(1-2/(3s))} \right\rceil =O\!\left(s\log\frac1\alpha\right)\)
times.  Thus the procedure has at most $R=kR_0$ rounds and records at most
\(q=R(s-1) =O\!\left(ks^2\log\frac1\alpha\right)\)
vertex occurrences.  Its terminal union $C$ contains $W$ and satisfies
$|C|\leq(1-\lambda)n$.

The complete run is determined by its at most $q$ recorded vertex
occurrences, the round boundaries, and the selected indices.  After identifying
repeated vertices, the number of possible auxiliary transcripts is
$\exp(\widetilde O(q))$.  Summing over these transcripts and over the ordered
fingerprints gives, exactly as in the standard container-counting argument, that
a uniformly random $t$-set satisfies the partition property with probability at
most $1/3$ for
\(t=\widetilde O\!\left(\frac{q+1}{\lambda}\right)
 =\widetilde O\!\left(\frac{k}{\varepsilon}s^2\right).\)
We take $t$ also larger than the fixed small-input threshold; this does not
change the displayed bound.  The tester samples $t$ vertices, inspects the
whole induced structure, and rejects exactly when it fails the partition
property.  It is one-sided, and the preceding bound gives soundness.  The
relation-query bound follows from \eqref{eq:relation-query-cost} with $r=2$.
\end{proof}

\subsection{Graphs and generalized colouring parameters}

Specializing the preceding theorem to the signature of simple graphs and using canonicalization gives the following query-complexity formulation.

\begin{thm}[Testing partitions of hereditary graph properties]
\label{thm:partitioned-hereditary-properties}
Let $k\geq1$, and let $\Pi_1,\dots,\Pi_k$ be hereditary and extendable graph properties.  Suppose that $\Pi_i$ admits a size-oblivious one-sided error tester of query complexity at most $Q_i(\varepsilon)$, and let
\(Q(\varepsilon)=\max_{i\in[k]}Q_i(\varepsilon)\).
Then $\Part(\Pi_1,\dots,\Pi_k)$ admits a size-oblivious one-sided error tester with query complexity
\(\widetilde O\!\left( \frac{k^2}{\varepsilon^2} Q^4\!\left(\frac{\varepsilon}{Ck}\right) \right),\)
where $C>0$ is an absolute constant.
\end{thm}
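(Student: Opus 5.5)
The proof reduces to Theorem~\ref{thm:relational-partition-closure} for the signature of simple graphs (one symmetric irreflexive binary relation), and then converts between query complexity and sample complexity at both ends. On the input side, each $\Pi_i$ has a size-oblivious one-sided tester making $Q_i(\varepsilon)$ queries. Theorem~\ref{thm:relational-canonicalization} (the graph case of Goldreich--Trevisan canonicalization) turns this into a one-sided induced-sampling tester with sample complexity
\[
S_i(\delta)\leq C_0Q_i(c_0\delta),
\]
where $C_0,c_0$ are absolute constants. We must also account for the change of normalization between $n^2$ in Part~I and the relational normalization, which only rescales the proximity parameter by an absolute constant and can be absorbed into $c_0$. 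Hence $S(\delta)=\max_iS_i(\delta)\leq C_0Q(c_0\delta)$.

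One hypothesis needs a small check. Theorem~\ref{thm:relational-partition-closure} requires each $\cP_i$ to be nonempty. If some $\Pi_i$ is empty, then by heredity it contains no graph at all, not even small ones, so the partition property is the same as the one obtained by dropping that index (allowing empty parts). Alternatively we can note that the empty-graph-on-zero-vertices convention makes every nonempty hereditary property contain small graphs. In either case we may discard empty properties and keep $k'\leq k$ of them, which only improves the bound. Extendability and heredity transfer verbatim, since graph properties are exactly symmetric irreflexive $\tau$-properties. The symmetry and irreflexivity constraints are part of the property, so edits performed inside the proof of Theorem~\ref{thm:relational-partition-closure} stay within graphs: that proof only replaces induced structures by members of the $\cP_i$ and uses extendability.

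Applying Theorem~\ref{thm:relational-partition-closure} then gives a one-sided induced-sampling tester for $\Part(\Pi_1,\dots,\Pi_k)$ with sample size
\[
t=\widetilde O\!\left(\frac{k}{\varepsilon}S\!\left(\frac{\varepsilon}{8k}\right)^2\right)
=\widetilde O\!\left(\frac{k}{\varepsilon}Q\!\left(\frac{c_0\varepsilon}{8k}\right)^2\right).
\]
This tester is size-oblivious: on inputs with fewer than $t$ vertices it inspects everything under the standing convention. Revealing the induced graph on $t$ sampled vertices costs $O(t^2)$ adjacency queries by \eqref{eq:relation-query-cost} with $r=2$. This gives
\[
\widetilde O\!\left(\frac{k^2}{\varepsilon^2}Q^4\!\left(\frac{\varepsilon}{Ck}\right)\right)
\]
with $C=8/c_0$, enlarged if necessary to absorb the normalization constant. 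We also use the standing convention that $Q$ is replaced by its nonincreasing monotone closure, so that shrinking the argument only increases $Q$.

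The main obstacle is bookkeeping rather than mathematics. We have to make sure the proximity-parameter rescaling from canonicalization composes correctly with the $\varepsilon/(8k)$ rescaling inside the partition theorem. We also have to confirm that the distance notion in the partition theorem (relational normalization) matches the graph distance up to a constant factor in both the hypothesis on $\Pi_i$ and the conclusion on $\Part$. I would handle this by fixing one absolute constant $\kappa$ with $\kappa^{-1}\dist\leq d_\tau\leq\kappa\dist$ and tracking it through each step.
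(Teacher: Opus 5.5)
Your proposal is correct and matches the paper's proof: canonicalize each $\Pi_i$ to get $S_i(\delta)=O(Q_i(c\delta))$, apply Theorem~\ref{thm:relational-partition-closure} to the symmetric irreflexive binary signature, and square the resulting sample bound; your tracking of the normalization constant is just more careful bookkeeping of the same argument. One small correction to your remark on empty properties: if some $\Pi_i$ contains no graph at all, then $\Part(\Pi_1,\dots,\Pi_k)$ is itself empty, not equal to the property with that index dropped, but that case is trivial anyway, since the tester that always rejects works with no queries.
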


\begin{proof}
By canonicalization, each $\Pi_i$ has one-sided induced-sampling complexity $S_i(\delta)=O(Q_i(c\delta))$ for an absolute constant $c>0$.  Apply Theorem~\ref{thm:relational-partition-closure} to the symmetric irreflexive binary signature.  The resulting tester samples
\(\widetilde O\!\left( \frac{k}{\varepsilon} Q^2\!\left(\frac{\varepsilon}{Ck}\right) \right)\)
vertices and queries every pair among them.  Squaring the sample bound proves the claim.
\end{proof}

For integers $r,s\geq0$ with $r+s\geq1$, a graph is
\textit{$(r,s)$-colourable} if its vertex set can be partitioned into $r$
independent sets and $s$ cliques; empty parts are allowed.  We denote this
property by $\Pi_{r,s}$. Thus $\Pi_{k,0}$ is ordinary $k$-colourability.  A \textit{split graph} is a
$(1,1)$-colourable graph, equivalently a graph whose vertex set is the union of
one independent set and one clique.  The \textit{cochromatic number} $z(G)$ is
the least $k$ for which $V(G)$ can be partitioned into $k$ parts, each inducing
either an independent set or a clique.  Equivalently,
\(z(G)=\min\{r+s:G\in\Pi_{r,s}\}\).

\begin{cor}[$(r,s)$-colourability]
\label{cor:rs-colourability}
For fixed $r,s\geq0$ with $r+s\geq1$, the property $\Pi_{r,s}$ admits a size-oblivious one-sided error tester with query complexity
\(\widetilde O\!\left(
{(r+s)^6}/{\varepsilon^6}
\right)\).
\end{cor}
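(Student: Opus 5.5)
The plan is to present $\Pi_{r,s}$ as a partition property and apply Theorem~\ref{thm:partitioned-hereditary-properties} with $k=r+s$. Let $\Pi_{\mathrm{ind}}$ be the property of being edgeless and $\Pi_{\mathrm{cl}}$ the property of being complete. By definition, $\Pi_{r,s}=\Part(\Pi_{\mathrm{ind}},\dots,\Pi_{\mathrm{ind}},\Pi_{\mathrm{cl}},\dots,\Pi_{\mathrm{cl}})$, with $r$ copies of the first and $s$ of the second. Empty parts are allowed, and the empty graph belongs to both properties, so this identification is exact. Both properties are nonempty, closed under induced subgraphs, and extendable: an edgeless (complete) graph on $m$ vertices is an induced subgraph of the edgeless (complete) graph on $m+m'$ vertices for every $m'\ge 0$.

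The quantitative input is a one-sided tester for each part. I will show that $\Pi_{\mathrm{ind}}$ has a size-oblivious one-sided tester with $Q(\varepsilon)=O(1/\varepsilon)$ queries. If $G$ is $\varepsilon$-far from edgeless, then $|E(G)|>\varepsilon n^2$, so a uniformly random pair of vertices is an edge with probability at least $2\varepsilon$. Querying $O(1/\varepsilon)$ independent random pairs (sampling $O(1/\varepsilon)$ vertices) and rejecting if any of them is an edge finds an edge with probability at least $2/3$, and it never rejects an edgeless graph. Complementation gives the same tester for $\Pi_{\mathrm{cl}}$. This matters because Theorem~\ref{thm:partitioned-hereditary-properties} is stated in query complexity and is lossy when one passes through canonicalization. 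Alternatively, one can plug induced-sampling complexity $S(\delta)=O(1/\delta)$ directly into Theorem~\ref{thm:relational-partition-closure}: an $s$-sample with $s=O(1/\delta)$ contains about $\delta s^2\gg1$ edge-pairs in expectation, and a second-moment or hypergeometric argument shows one appears with probability at least $2/3$. This route is the safer one.

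Substituting $S(\varepsilon/(8k))=O(k/\varepsilon)$ into Theorem~\ref{thm:relational-partition-closure} gives sample complexity $\widetilde O\bigl((k/\varepsilon)\cdot(k/\varepsilon)^2\bigr)=\widetilde O(k^3/\varepsilon^3)$. Querying all pairs in the sample, as in \eqref{eq:relation-query-cost} with $r=2$, then yields $\widetilde O(k^6/\varepsilon^6)=\widetilde O((r+s)^6/\varepsilon^6)$ queries, which is the claimed bound. The theorem already takes care of small inputs and of the distance normalization, which differs between the graph and relational conventions only by a constant that is absorbed into $O(\cdot)$.

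I expect the main obstacle to be bookkeeping rather than a new idea. The tempting route, feeding $Q=O(1/\varepsilon)$ into the $Q^4$ form of Theorem~\ref{thm:partitioned-hereditary-properties}, gives $\widetilde O(k^2\varepsilon^{-2}(k/\varepsilon)^4)=\widetilde O(k^6/\varepsilon^6)$ as well, so either route reaches the bound. The point to check carefully is that the sample complexity, not just the query complexity, is linear in $1/\delta$ for the edgeless property; otherwise the exponent would come out worse. I would therefore verify the $O(1/\delta)$ induced-sampling bound explicitly, using the fact that a $\delta$-far graph has $\Omega(\delta n^2)$ edges. A greedy argument then shows that $O(1/\delta)$ random vertices span an edge with constant probability: split the sample into $O(1/\delta)$ disjoint random pairs.
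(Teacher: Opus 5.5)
Your proposal is correct and is essentially the paper's proof. The paper observes that the edgeless and complete properties are hereditary, extendable, and have one-sided testers with $O(1/\varepsilon)$ queries, and then applies Theorem~\ref{thm:partitioned-hereditary-properties} with $k=r+s$, giving $\widetilde O\bigl(k^2\varepsilon^{-2}(k/\varepsilon)^4\bigr)=\widetilde O\bigl((r+s)^6/\varepsilon^6\bigr)$, exactly as in your ``tempting route''. Your alternative through Theorem~\ref{thm:relational-partition-closure} with $S(\delta)=O(1/\delta)$ is the same argument one level down, since that theorem is how Theorem~\ref{thm:partitioned-hereditary-properties} is proved and canonicalization loses nothing here. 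So your extra pair-splitting check is not needed, which is just as well because the pairs in it are not actually independent.
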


\begin{proof}
The properties of being edgeless and complete are hereditary and extendable, and each has a one-sided tester using $O(1/\varepsilon)$ queries.  Apply Theorem~\ref{thm:partitioned-hereditary-properties} with $k=r+s$.
\end{proof}

Two familiar cases are worth recording separately.

\begin{cor}\label{cor:color}
For every $k\geq1$, $k$-colourability admits a size-oblivious one-sided error tester with query complexity $\widetilde O(k^6/\varepsilon^6)$.
\end{cor}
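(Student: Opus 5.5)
This follows by specializing Corollary~\ref{cor:rs-colourability} to $(r,s)=(k,0)$. Indeed, $k$-colourability is exactly $\Pi_{k,0}$, the property of partitioning the vertex set into $k$ independent sets, with empty parts allowed. So I would first note that $k$-colourability equals $\Part(\Pi_1,\dots,\Pi_k)$ with every $\Pi_i$ the property of being edgeless. I would then check the hypotheses of Theorem~\ref{thm:partitioned-hereditary-properties} for the edgeless property. It is nonempty (the one-vertex graph is edgeless) and hereditary, since induced subgraphs of edgeless graphs are edgeless. It is extendable, since one may add $m$ isolated vertices.

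The one quantitative input is a size-oblivious one-sided tester for edgelessness with $Q(\varepsilon)=O(1/\varepsilon)$ queries. An $n$-vertex graph that is $\varepsilon$-far from being edgeless has more than $\varepsilon n^2$ edges. Querying a uniformly random pair of vertices then finds an edge with probability at least about $2\varepsilon$. Repeating $O(1/\varepsilon)$ independent pair queries, and rejecting when an edge is found, gives one-sided error and rejection probability at least $2/3$. The tester never needs $n$, and on tiny inputs the sampling convention lets it inspect the whole graph.

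Finally I would substitute into the bound of Theorem~\ref{thm:partitioned-hereditary-properties}:
\[
\widetilde O\!\left(\frac{k^2}{\varepsilon^2}\,Q^4\!\left(\frac{\varepsilon}{Ck}\right)\right)
=\widetilde O\!\left(\frac{k^2}{\varepsilon^2}\cdot\frac{k^4}{\varepsilon^4}\right)
=\widetilde O\!\left(\frac{k^6}{\varepsilon^6}\right).
\]
This is the claim. I see no real obstacle here; the only care needed is that the $Q^4$ bound is applied to the monotone closure of $Q$, which is harmless because $Q$ is already monotone. The $k=1$ case is included, where the bound is just the edgeless tester itself.
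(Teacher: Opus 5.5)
Your proposal is correct and follows the paper's own route: the corollary is the case $(r,s)=(k,0)$ of Corollary~\ref{cor:rs-colourability}. That corollary is proved exactly by feeding the $O(1/\varepsilon)$-query one-sided edgeless tester, for a hereditary and extendable property, into Theorem~\ref{thm:partitioned-hereditary-properties}, which gives $\frac{k^2}{\varepsilon^2}\cdot\frac{k^4}{\varepsilon^4}$. You additionally supply the edgeless tester itself and the nonemptiness and extendability checks, which the paper leaves implicit.
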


\begin{cor}\label{cor:split}
The property of being a split graph admits a size-oblivious one-sided error tester with query complexity $\widetilde O(1/\varepsilon^6)$.
\end{cor}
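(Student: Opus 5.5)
The split-graph property is exactly $\Pi_{1,1}$, the $(1,1)$-colourable graphs: the vertex set is partitioned into one independent set and one clique, with empty parts allowed. So the plan is to specialize Corollary~\ref{cor:rs-colourability} to $r=s=1$, or equivalently to apply Theorem~\ref{thm:partitioned-hereditary-properties} with $k=2$. We take $\Pi_1$ to be the edgeless graphs and $\Pi_2$ the complete graphs. Since the definition of $\Part$ fixes the $i$th part to satisfy $\Pi_i$, the asymmetry between the two parts is handled automatically. Cross edges between the clique and the independent set are unrestricted, which matches the definition of a split graph.

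First, I check the hypotheses on the two parts. Both properties are nonempty. Both are hereditary, since induced subgraphs of edgeless (resp. complete) graphs are edgeless (resp. complete). Both are extendable: add $m$ isolated vertices in the first case, and $m$ vertices adjacent to everything in the second.

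Second, I exhibit one-sided testers of complexity $O(1/\varepsilon)$ for each. If $G$ is $\varepsilon$-far from being edgeless, then $G$ has more than $\varepsilon n^2$ edges, because deleting all edges is a valid edit. Sampling $O(1/\varepsilon)$ uniformly random vertex pairs and querying them therefore finds an edge with probability at least $2/3$; the tester rejects only on seeing an edge, so it is one-sided. The same argument applied to non-edges handles cliques. In canonical form this is a sample of $O(1/\varepsilon)$ vertices, or alternatively we can just use $Q(\varepsilon)=O(1/\varepsilon)$ queries, as in Corollary~\ref{cor:rs-colourability}.

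Third, I plug these into Theorem~\ref{thm:partitioned-hereditary-properties}. With $k=2$ and $Q(\delta)=O(1/\delta)$, the bound $\widetilde O\bigl(\frac{k^2}{\varepsilon^2}Q^4(\varepsilon/(Ck))\bigr)$ becomes $\widetilde O\bigl(\varepsilon^{-2}\cdot(2C/\varepsilon)^4\bigr)=\widetilde O(1/\varepsilon^6)$, since $k$ and $C$ are absolute constants.

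There is no real obstacle. The only point needing care is that the empty part is allowed, so graphs that are themselves cliques or edgeless count as split graphs. This is consistent with the definition of $\Pi_{1,1}$, and the nonemptiness requirement in the partition theorem refers to the properties $\Pi_i$, not to the parts.
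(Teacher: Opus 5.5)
Your proposal is correct and follows the paper's own route. The paper states this corollary as the case $r=s=1$ of Corollary~\ref{cor:rs-colourability}, which is exactly Theorem~\ref{thm:partitioned-hereditary-properties} with $k=2$, applied to the edgeless and complete properties and their $O(1/\varepsilon)$-query one-sided testers, giving $\widetilde O(1/\varepsilon^6)$ as you compute.
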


For a digraph $D$, the \textit{dichromatic number} $\vec\chi(D)$ is the least
integer $k$ for which $V(D)$ can be partitioned into $k$ sets, each inducing an
acyclic digraph.

\begin{cor}[Dichromatic number]\label{cor:dichromatic-number}
For every $k\geq1$, the property $\vec\chi(D)\leq k$ admits a one-sided
induced-sampling tester with sample complexity
\(\widetilde O\!\left(\frac{k^3}{\varepsilon^3}\right).\)
Consequently, it admits a size-oblivious one-sided-error tester with
relation-query complexity
\(\widetilde O\!\left(\frac{k^6}{\varepsilon^6}\right).\)
\end{cor}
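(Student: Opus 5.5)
The plan is to apply Theorem~\ref{thm:relational-partition-closure} with $\tau$ the signature of digraphs: one binary relation with no symmetry requirement, and $\cP_1=\cdots=\cP_k$ equal to the property $\mathcal{D}$ of being an acyclic, loopless digraph. The property $\vec\chi(D)\le k$ is exactly $\Part(\mathcal D,\dots,\mathcal D)$, with empty parts allowed, and loops are excluded by definition of digraphs. If one prefers to keep looplessness in the property rather than the signature, it is inherited by every part, so nothing changes. The sample bound $\widetilde O(\frac{k}{\varepsilon}S(\frac{\varepsilon}{8k})^2)$ will then give $\widetilde O(k^3/\varepsilon^3)$ provided $S_{\mathcal D}(\delta)=\widetilde O(1/\delta)$. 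The relation-query bound follows by squaring, via \eqref{eq:relation-query-cost} with $r=2$.

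First I verify the structural hypotheses. $\mathcal D$ is nonempty, since it contains the one-vertex digraph. It is hereditary, because an induced subdigraph of an acyclic digraph is acyclic. It is extendable, since adding isolated vertices preserves acyclicity and the original digraph remains induced.

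The remaining step, and the main obstacle, is a one-sided induced-sampling tester for acyclicity with sample complexity $\widetilde O(1/\delta)$. Here I would use the known result that a digraph $\delta$-far from acyclic contains a short cycle and can be rejected by sampling $\widetilde O(1/\delta)$ vertices (Bender--Ron). The canonical version samples $O(1/\delta)$-ish vertices and rejects if the induced subdigraph has a cycle. This is one-sided by heredity, and Lemma~\ref{lem:relational-membership-rule} lets us take exactly this rejection rule.

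If I wanted a self-contained route, I would try the following. Take a vertex ordering minimizing backward arcs. Being $\delta$-far forces $\delta n^2$ backward arcs, and local optimality forces each backward arc $(u,v)$ to be "supported" by many vertices $w$ forming short cycles through it. Sampling $O(\log(1/\delta)/\delta)$ vertices then catches a backward arc together with a witnessing path. I expect that making this quantitative without losing an extra $1/\delta$ factor is the delicate point. Failing that, I would simply cite the existing tester.

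Substituting $S(\varepsilon/8k)=\widetilde O(k/\varepsilon)$ gives sample complexity $\widetilde O\bigl(\frac{k}{\varepsilon}\cdot\frac{k^2}{\varepsilon^2}\bigr)=\widetilde O(k^3/\varepsilon^3)$. The induced-sampling tester reveals all ordered pairs of the sample, about $t^2$ relation queries, and is size-oblivious in the relation-query model. This gives the claimed $\widetilde O(k^6/\varepsilon^6)$ query bound.
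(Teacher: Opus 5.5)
Your proposal is correct and matches the paper's proof. Like the paper, you write $\vec\chi(D)\le k$ as $\Part$ of $k$ copies of acyclicity, take $S_{\mathrm{acyc}}(\delta)=\widetilde O(1/\delta)$ from Bender--Ron, apply Theorem~\ref{thm:relational-partition-closure}, and square the sample bound to get the query bound. Your check that acyclicity is nonempty, hereditary and extendable supplies hypotheses the paper uses without comment, and the speculative self-contained route is not needed.
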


\begin{proof}
Let $\cP_{\mathrm{acyc}}$ be the property of being acyclic.  A digraph has
dichromatic number at most $k$ exactly when it belongs to
$\Part(\cP_{\mathrm{acyc}},\ldots,\cP_{\mathrm{acyc}})$ with $k$ factors.
Bender and Ron \cite{bender2002testing} prove that a uniformly random set of
$\widetilde O(1/\delta)$ vertices detects a directed cycle with probability at
least $2/3$ in every digraph that is $\delta$-far from acyclic.  Thus
$S_{\mathrm{acyc}}(\delta)=\widetilde O(1/\delta)$.  Applying
Theorem~\ref{thm:relational-partition-closure} gives sample complexity
\[
\widetilde O\!\left(
\frac{k}{\varepsilon}
S_{\mathrm{acyc}}\!\left(\frac{\varepsilon}{8k}\right)^2
\right)
=\widetilde O\!\left(\frac{k^3}{\varepsilon^3}\right).
\]
Revealing all ordered pairs in the sample gives the stated relation-query
bound.
\end{proof}

\begin{rem}
The sample bound in the Bender--Ron tester is important here.  Their
adjacency-matrix query bound is $\widetilde O(\delta^{-2})$; applying the graph
query-complexity closure theorem directly to that estimate would give only
$\widetilde O(k^{10}\varepsilon^{-10})$.
\end{rem}

\begin{cor}[Cochromatic number]
\label{cor:cochromatic-number}
For every $k\geq1$, the property of having cochromatic number at most $k$ admits a size-oblivious one-sided error tester with query complexity
\(\widetilde O\!\left({k^7}/{\varepsilon^6}\right).
\)
\end{cor}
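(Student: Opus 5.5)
The plan is to write the property as a finite union of the properties already handled in Corollary~\ref{cor:rs-colourability} and to combine their testers by a union-bound amplification. Since empty parts are allowed, $\Pi_{r,s}\subseteq\Pi_{r',s'}$ whenever $r\le r'$ and $s\le s'$. Hence $z(G)\le k$ holds exactly when
\[
G\in\bigcup_{r+s=k}\Pi_{r,s},
\]
a union of the $k+1$ properties $\Pi_{0,k},\Pi_{1,k-1},\dots,\Pi_{k,0}$. For each such pair, Corollary~\ref{cor:rs-colourability} supplies a size-oblivious one-sided tester for $\Pi_{r,s}$ with query complexity $\widetilde O(k^6/\varepsilon^6)$.

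The combined tester runs, for each of the $k+1$ pairs $(r,s)$, an amplified version of the $\Pi_{r,s}$ tester. It accepts if at least one of the amplified testers accepts, and rejects only if all of them reject.
\begin{enumerate}[label=\textnormal{(\roman*)}]
\item \emph{Amplification.} Each amplified tester repeats the basic tester independently $m=O(\log k)$ times and rejects if any run rejects. Repetition preserves one-sidedness, and it lowers the probability of accepting an $\varepsilon$-far input to at most $(1/3)^m\le 1/(3(k+1))$.
\item \emph{Completeness.} If $z(G)\le k$, then $G\in\Pi_{r,s}$ for some pair with $r+s=k$. That tester never rejects $G$, so the combined tester accepts with probability one.
\item \emph{Soundness.} If $G$ is $\varepsilon$-far from the union, it is $\varepsilon$-far from every $\Pi_{r,s}$. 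By the union bound over the $k+1$ amplified testers, all of them reject with probability at least $2/3$.
\end{enumerate}
All of these testers are size-oblivious, so the combined tester is size-oblivious as well.

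The total query complexity is $(k+1)\cdot O(\log k)\cdot\widetilde O(k^6/\varepsilon^6)=\widetilde O(k^7/\varepsilon^6)$; the $\log k$ factor is absorbed into $\widetilde O$.

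There is no genuine obstacle in this argument. The only points needing care are, first, the identification of the cochromatic condition with the union over $r+s=k$, which relies on empty parts being allowed. Second, one must check that the soundness threshold in Corollary~\ref{cor:rs-colourability} is uniform over all pairs with $r+s=k$, so that its bound, stated with $(r+s)^6$, gives the same $k^6$ factor for every summand.
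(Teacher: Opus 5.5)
Your proposal is correct and follows essentially the same route as the paper: write $\{z(G)\le k\}$ as $\bigcup_{r=0}^{k}\Pi_{r,k-r}$ using that empty parts are allowed, run the one-sided testers from Corollary~\ref{cor:rs-colourability}, each amplified to error at most $1/(3(k+1))$, and reject only when all of them reject. Soundness then follows from the union bound, and the query cost is $(k+1)$ times the $\widetilde O(k^6/\varepsilon^6)$ bound, with the $O(\log k)$ amplification absorbed into the $\widetilde O$.
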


\begin{proof}
Because empty parts are allowed, $z(G)\leq k$ if and only if
$G\in\bigcup_{r=0}^{k}\Pi_{r,k-r}$.  Run independent amplified one-sided
testers for these $k+1$ properties and reject precisely when all of them
reject.  Completeness is one-sided.  If the input is $\varepsilon$-far from the
union, then it is $\varepsilon$-far from every constituent property; amplifying
each tester to error probability at most $1/(3(k+1))$ and applying the union
bound gives soundness at least $2/3$.  The factor $k+1$ from running all the
testers gives the stated bound, with amplification absorbed by the
polylogarithmic factor.
\end{proof}

\section{Linearly large induced substructures}
\label{sec:large-induced-applications}

The next application concerns properties asserting that there is a linearly large induced substructure satisfying $\cP$.  Such properties are generally not hereditary and need not be expressible through a bounded vertex partition.  Nevertheless, iterating the ordered-container construction forces every induced member of $\cP$ into a container whose density lies below the target density.

\subsection{A general large-induced-substructure theorem}

For $\rho\in(0,1]$ and a property $\cP$, write
\(\mathrm{Large}_\rho(\cP) =\{\cA:\text{there is }U\subseteq V(\cA),\ |U|\geq\rho|V(\cA)|, \ \cA[U]\in\cP\}.\)

\begin{thm}[Testing for a large induced member]
\label{thm:relational-large-induced}
Let $\tau$ be a finite binary signature, let $\cP$ be a nonempty hereditary
and extendable $\tau$-property, and let $S_{\cP}(\delta)$ be its one-sided
induced-sampling complexity.  Fix $\rho\in(0,1]$ and
$0<\varepsilon\leq\rho^2/8$, and let
\(\delta={\varepsilon}/{4\rho^2}\), \(\gamma={\varepsilon}/{16\rho}\), and \(   s=S_{\cP}(\delta)\).
Then $\mathrm{Large}_\rho(\cP)$ has a two-sided induced-sampling tester using
\(\widetilde O\!\left( \frac{\rho^2}{\varepsilon^2} s^2\log\frac2\rho \right)\)
vertices.  Its relation-query complexity is the square of this quantity, up
to a constant depending on $\tau$.
\end{thm}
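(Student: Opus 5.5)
We mirror the proof of Theorem~\ref{thm:relational-partition-closure}, with a single container that is shrunk repeatedly and a threshold that scales with $\rho$. The tester samples a uniformly random set $T$ of $t$ vertices and inspects $\cA[T]$. It accepts exactly when $\cA[T]$ contains an induced member of $\cP$ on at least $(\rho-\gamma)t$ vertices. Inputs of order below a fixed threshold are inspected completely. Two properties have to be checked: completeness, meaning members of $\mathrm{Large}_\rho(\cP)$ are accepted with probability $\ge 2/3$, and soundness, meaning $\varepsilon$-far inputs are rejected with probability $\ge 2/3$. The tester is two-sided because of the slack $\gamma$.

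\emph{Completeness.} Suppose $U$ has $|U|\ge\rho n$ and $\cA[U]\in\cP$. Then $|T\cap U|$ is hypergeometric with mean at least $\rho t$. Since $t\gg \gamma^{-2}$, which holds for the chosen $t$ because $\gamma^{-2}=256\rho^2/\varepsilon^2\lesssim \rho^2 s^2/\varepsilon^2$ when $s\ge1$, a Chernoff bound gives $|T\cap U|\ge(\rho-\gamma)t$ with probability $\ge 2/3$. Heredity then shows that $\cA[T\cap U]\in\cP$.

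\emph{Soundness, step 1: a structural claim.} Let $\cA$ be $\varepsilon$-far from $\mathrm{Large}_\rho(\cP)$, and let $C\subseteq V(\cA)$ have $|C|\ge(\rho-2\gamma)n$. We claim that $\cA[C]$ is $\delta$-far from $\cP$, with distance measured relative to $|C|^2$.

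Suppose not, and edit $\cA[C]$ into $\cP$ at cost $\delta|C|^2\le\delta n^2$, giving normalized cost $\le\varepsilon/(4\rho^2)$. This is too weak on its own, so we proceed as follows.
\begin{itemize}
\item If $|C|\le \rho n$, we bound the cost as $\delta |C|^2\le \delta\rho^2n^2=\varepsilon n^2/4$.
\item If $|C|<\rho n$, we extend the edited $\cA[C]$ to $\rho n$ vertices by extendability. This adds at most $2\cdot 2\gamma n^2\le \varepsilon n^2/4$ changed binary tuples, using $\rho\le1$.
\item If $|C|>\rho n$, we simply restrict to a $\rho n$-subset by heredity.
\end{itemize}
In every case the total cost is $<\varepsilon$, which contradicts farness. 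The constants $\delta=\varepsilon/4\rho^2$ and $\gamma=\varepsilon/16\rho$ are chosen exactly for this bookkeeping. We will check the case $|C|>\rho n$ carefully, since there the edit cost is $\delta|C|^2$ with $|C|$ up to $n$. If this is too costly, we instead first restrict to a $\rho n$-subset of $C$ and only then apply farness, so the claim is really about sets of size $\ge(\rho-2\gamma)n$ that are close to $\cP$.

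\emph{Soundness, step 2: the container iteration.} Fix any $W$ with $\cA[W]\in\cP$, and start from $C=V(\cA)$. While $|C|\ge(\rho-2\gamma)n$, the claim says $\cA[C]$ is $\delta$-far from $\cP$. The obstruction hypergraph $H[C]$ therefore has density $\ge2/3$, and Lemma~\ref{lem:dense-ordered-containers} shrinks $C$ by a factor $(1-2/(3s))$ while recording $\le s-1$ vertices of $W$. The number of rounds is $R=O(s\log(1/\rho))$, so fingerprints have length $q=O(s^2\log(2/\rho))$. The terminal container has size $<(\rho-2\gamma)n$ and contains $W$.

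\emph{Soundness, step 3: counting.} We adapt Lemma~\ref{lem:counting-from-containers}. For the tester to accept, some $W$ with $|W\cap T|\ge(\rho-\gamma)t$ must exist. Its fingerprint $F$ lies in $T$, and $T\cap C(F)\supseteq W\cap T$. For a fixed $F$ of length $j$, the quantity $|T\cap C(F)\setminus F|$ is hypergeometric with mean $<(\rho-2\gamma)t$. The probability that it reaches $(\rho-\gamma)t-j$ is at most $\exp(-\Omega(\gamma^2t^2/(\rho t)))=\exp(-\Omega(\gamma^2 t/\rho))$, which uses the variance-sensitive Chernoff bound with $j\le q\ll\gamma t$. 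A union bound over the $\le\sum_j t^j$ ordered fingerprints inside $T$, together with the transcripts encoding round boundaries (at most $\exp(\widetilde O(q))$ of them), requires $\gamma^2t/\rho\gtrsim q\log t$. This gives $t=\widetilde O(\rho q/\gamma^2)$.

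The main obstacle is matching the stated bound $\widetilde O(\rho^2 s^2\log(2/\rho)/\varepsilon^2)$. Substituting the naive values gives $\widetilde O(\rho^3 s^2\log(2/\rho)/\varepsilon^2)$. That is at least as good up to the factor $\rho\le 1$ and the requirement $q\le\gamma t$, so the target bound would follow. The delicate point is that the concentration must use variance $\rho t$ rather than $t$. We also need to make sure the fingerprint entries, which are forced into $T$, do not spoil concentration. For this we condition on $F\subseteq T$ and apply the hypergeometric bound to the remaining $t-j$ vertices.
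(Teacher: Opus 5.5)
Your architecture is the paper's: local farness for every set slightly smaller than $\rho n$, repeated application of Lemma~\ref{lem:dense-ordered-containers} until the container drops below that size, a union bound over $\exp(\widetilde O(q))$ transcripts, and a hypergeometric tail bound.

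\textbf{Gap: Step~1 when $|C|>\rho n$.} You flag this case but do not close it, and it cannot be avoided. The iteration starts from $C_0=V(\cA)$, so the first rounds need $H[C]$ to be dense for containers of size between $\rho n$ and $n$. Restricting the edited structure to an arbitrary $\lceil\rho n\rceil$-subset keeps it in $\cP$ by heredity. However, the edits you actually pay for are those inside that subset. For an arbitrary subset you only have the bound $\delta|C|^2$, which can be as large as $\varepsilon n^2/(4\rho^2)>\varepsilon n^2$ once $\rho<1/2$. Your fallback, proving the claim only for sets of size about $\rho n$, also says nothing directly about the larger containers.

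The missing ingredient is an averaging step. The paper chooses $W\subseteq C$ uniformly among $\lceil\rho n\rceil$-subsets. An edited tuple with distinct coordinates lies in $W^2$ with probability at most $(\lceil\rho n\rceil/|C|)^2$. Hence some $W$ carries at most $\delta\rho^2n^2+O_\tau(n)$ edits, a normalized cost of about $\varepsilon/4$. An alternative repair: prove local farness only for sets of size in $[(\rho-2\gamma)n,\lceil\rho n\rceil]$. Then use that the edge density of $H[C]$ equals the average of the densities of $H[C']$ over uniformly random $\lceil\rho n\rceil$-subsets $C'\subseteq C$, each of which is at least $2/3$.

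\textbf{Extension bullet.} The bound $2\cdot2\gamma n^2=\varepsilon n^2/(4\rho)$ exceeds $\varepsilon n^2/4$ for $\rho<1$, so ``using $\rho\le1$'' goes the wrong way. The correct count uses two facts: each changed tuple meets one of the at most $2\gamma n+1$ new vertices, and its other coordinate lies in the $\lceil\rho n\rceil$-set. This gives $4\gamma\rho n^2+O(n)=\varepsilon n^2/4+O(n)$.

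\textbf{The ``main obstacle''.} This is not a real obstacle. Plain Hoeffding with deviation $\gamma t$ already requires only $t=\widetilde O((q+1)/\gamma^2)$. Since $\gamma^{-2}=256\rho^2/\varepsilon^2$ and $q=O(s^2\log(2/\rho))$, this is exactly the stated bound, and it is how the paper concludes. Your variance-sensitive bound is valid but not needed.
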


\begin{proof}
Put $n=|V(\cA)|$.  We first establish the local-farness statement used by the
container iteration.  There is a threshold
\(n_0=O_\tau\!\left(\frac{s}{\rho}+\frac1\varepsilon+1\right),\)
such that if $n\geq n_0$ and $\cA$ is $\varepsilon$-far from
$\mathrm{Large}_\rho(\cP)$, then every
$U\subseteq V(\cA)$ with $|U|\geq(\rho-\gamma)n$ is $\delta$-far from
$\cP$.

Indeed, suppose that $\cA[U]$ can be changed into a structure
$\cB\in\cP$ at cost at most $\delta$ relative to $|U|^2$.  If
$|U|\geq\rho n$, choose a uniformly random set
$W\subseteq U$ of size $\lceil\rho n\rceil$.  Restricting the edit set to
$W$ and averaging shows that some such $W$ requires at most
\(\delta\rho^2n^2+O_\tau(n)\)
relation changes; the $O_\tau(n)$ term accounts for tuples with repeated
coordinates.  Since $\cB[W]\in\cP$, this makes $\cA$ closer than
$\varepsilon$ to $\mathrm{Large}_\rho(\cP)$ for sufficiently large $n$.
If instead $|U|<\rho n$, extend $\cB$ to a member of $\cP$ on
$\lceil\rho n\rceil$ vertices.  Besides the edits inside $U$, only binary
relation tuples meeting at most $\gamma n+1$ new vertices need be changed, at
normalized cost at most $3\gamma\rho+o(1)$.  Since
\(\delta\rho^2+3\gamma\rho =\frac{\varepsilon}{4}+\frac{3\varepsilon}{16} <\varepsilon,\)
we again obtain a contradiction.  Increase $n_0$ also so that
$(\rho-\gamma)n_0\geq s$.

Let $H$ be the $s$-uniform obstruction hypergraph on $V(\cA)$,
\(E(H)=\{S\in\tbinom{V(\cA)}s:\cA[S]\notin\cP\}.\)
For every $C\subseteq V(\cA)$ with $|C|>(\rho-\gamma)n$, the local-farness
statement and the definition of $s$ imply that $H[C]$ has density at least
$2/3$.  Fix a total order of the vertices and use the canonical maps from
Lemma~\ref{lem:dense-ordered-containers}.  Given a set $I$ with
$\cA[I]\in\cP$, start with $C_0=V(\cA)$ and, while
$|C_a|>(\rho-\gamma)n$, apply the dense ordered-container lemma to the
independent set $I\subseteq C_a$ in $H[C_a]$.  Each round records at most
$s-1$ vertices of $I$ and replaces the current container by one of size at
most $(1-2/(3s))|C_a|$.

The number of rounds is at most
\(R= 1+\left\lceil \frac{\log(1/(\rho-\gamma))}{-\log(1-2/(3s))} \right\rceil =O\!\left(s\log\frac2\rho\right),\)
where we used $\gamma\leq\rho/128$.  Thus the concatenated transcript has
length at most
\(q=R(s-1)=O\!\left(s^2\log\frac2\rho\right).\)
The complete run is determined by its at most $q$ recorded vertex
occurrences and its round boundaries.  Identifying repeated vertices leaves
$\exp(\widetilde O(q))$ possible auxiliary transcripts, each of which, together
with an ordered fingerprint of length at most $q$, determines a container of
size at most $(\rho-\gamma)n$.  A union bound over these choices, followed by
the hypergeometric tail bound, shows that it is enough to take
\(t=\widetilde O\!\left(\frac{q+1}{\gamma^2}\right)
 =\widetilde O\!\left(\frac{\rho^2}{\varepsilon^2}s^2\log\frac2\rho\right).\)
Choose $t$ also larger than $n_0$.
The tester samples a uniformly random $t$-set $T$ and accepts precisely when
there is $I\subseteq T$ such that
\(|I|\geq(\rho-\gamma/2)t \qquad\text{and}\qquad \cA[I]\in\cP.\)
If the input has fewer than $t$ vertices, it is inspected completely.

If $\cA$ contains an induced member of $\cP$ on at least $\rho n$ vertices,
Hoeffding's inequality for the hypergeometric distribution gives
\(\Pr\bigl[|T\cap I|<(\rho-\gamma/2)t\bigr] \leq e^{-\gamma^2t/2}<\frac13\)
for a sufficiently large implicit constant; heredity then gives completeness.
If $\cA$ is $\varepsilon$-far, the preceding union bound shows that the
acceptance probability is at most $1/3$.  This proves the sample bound.  The
relation-query bound follows from \eqref{eq:relation-query-cost} with $r=2$.
\end{proof}

\subsection{Large acyclic subdigraphs}

For $\rho\in(0,1]$, we call a digraph $D$ on $n$ vertices a
$\rho$-DAG if it contains an induced acyclic subdigraph on at least $\rho n$
vertices.  Since acyclicity is monotone under deletion of arcs, $D$ is
$\varepsilon$-far from this property exactly when, for every
$S\subseteq V(D)$ with $|S|=\lceil\rho n\rceil$, more than
$\varepsilon n^2$ arcs must be deleted from $D[S]$ to make it acyclic.

\begin{thm}\label{thm:rho-DAG}
Let $\rho\in(0,1]$.  The property of being a $\rho$-DAG admits a size-oblivious two-sided error tester with query complexity
\(\widetilde O\!\left(\rho^{12}\varepsilon^{-8}\right)
\)
whenever $0<\varepsilon\leq\rho^2/8$.  For arbitrary $\varepsilon>0$, the query complexity is
\(\widetilde O\!\left(
\rho^{12}\min\{\varepsilon,\rho^2\}^{-8}
\right)\).
\end{thm}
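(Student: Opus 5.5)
This is a specialization of Theorem~\ref{thm:relational-large-induced}. We take $\tau$ to be the signature of one binary relation and $\cP=\cP_{\mathrm{acyc}}$, the property of being acyclic. The $\rho$-DAG property is then exactly $\mathrm{Large}_\rho(\cP_{\mathrm{acyc}})$.

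First we check the hypotheses. $\cP_{\mathrm{acyc}}$ is nonempty and hereditary, since induced subdigraphs of acyclic digraphs are acyclic. It is extendable, because one may add isolated vertices. For its one-sided induced-sampling complexity we use the Bender--Ron bound already invoked in the proof of Corollary~\ref{cor:dichromatic-number}: a uniformly random set of $\widetilde O(1/\delta)$ vertices contains a directed cycle with probability at least $2/3$ whenever the digraph is $\delta$-far from acyclic. Hence $S_{\cP}(\delta)=\widetilde O(1/\delta)$. The only point to check is that the normalization of distance in Bender--Ron (by $n^2$) agrees with $d_\tau$ up to constants; such constants are absorbed into the $\widetilde O$.

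Next we substitute. With $\delta=\varepsilon/(4\rho^2)$ we get $s=\widetilde O(\rho^2/\varepsilon)$. Theorem~\ref{thm:relational-large-induced} then gives sample complexity
\[
\widetilde O\!\left(\frac{\rho^2}{\varepsilon^2}\cdot\frac{\rho^4}{\varepsilon^2}\log\frac2\rho\right)
=\widetilde O\!\left(\rho^6\varepsilon^{-4}\right),
\]
where the factor $\log(2/\rho)$ is polylogarithmic in $1/\varepsilon$ because $\varepsilon\le\rho^2/8$, so it is absorbed. Revealing all ordered pairs in the sample squares this, by \eqref{eq:relation-query-cost} with $r=2$, giving $\widetilde O(\rho^{12}\varepsilon^{-8})$ queries. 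The tester is size-oblivious because the induced-sampling tester never uses $n$: small inputs are handled by the standing convention of inspecting the whole graph.

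Finally, for arbitrary $\varepsilon>0$ we run the tester with proximity parameter $\varepsilon'=\min\{\varepsilon,\rho^2/8\}$. This is legitimate because a graph that is $\varepsilon$-far is also $\varepsilon'$-far, and completeness does not depend on the proximity parameter. The bound then becomes $\widetilde O(\rho^{12}\min\{\varepsilon,\rho^2\}^{-8})$, with the constant $8$ absorbed.

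The main obstacle is making sure the polylogarithmic factors in $S_{\mathrm{acyc}}$ and the $\log(2/\rho)$ factor stay polylogarithmic after squaring, and that the monotone-closure convention for $S$ is compatible with the Bender--Ron bound. Both of these are routine.
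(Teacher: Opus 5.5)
Your proposal is correct and follows the paper's proof essentially step for step. The paper also specializes Theorem~\ref{thm:relational-large-induced} to acyclicity with the Bender--Ron bound $S_{\mathrm{acyc}}(\delta)=\widetilde O(1/\delta)$, obtains $\widetilde O(\rho^6\varepsilon^{-4})$ samples, squares this for the query bound, and handles general $\varepsilon$ by ``monotonicity in $\varepsilon$'', which is exactly your explicit choice $\varepsilon'=\min\{\varepsilon,\rho^2/8\}$.
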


\begin{proof}
Apply Theorem~\ref{thm:relational-large-induced} to the binary directed signature and to the hereditary, extendable property of acyclicity.  Bender and Ron \cite{bender2002testing} show that a uniformly random set of
$O(\delta^{-1}\log(1/\delta))$ vertices detects a directed cycle with
probability at least $2/3$ in every digraph that is $\delta$-far from acyclic.
Thus
\(S_{\mathrm{acyc}}(\delta)=\widetilde O(1/\delta).\)
With $\delta=\varepsilon/(4\rho^2)$, Theorem~\ref{thm:relational-large-induced} gives sample complexity $\widetilde O(\rho^6\varepsilon^{-4})$.  Revealing all ordered pairs in the sample gives the asserted query bound.  The second statement follows by monotonicity in $\varepsilon$.
\end{proof}

\begin{cor}[Polynomial preservation]
\label{cor:large-induced-polynomial}
Let $\tau$ be a fixed binary signature and let $\cP$ be hereditary and extendable.  If $\cP$ has polynomial one-sided induced-sampling complexity, then $\mathrm{Large}_\rho(\cP)$ has polynomial two-sided induced-sampling complexity for every fixed $\rho>0$.
\end{cor}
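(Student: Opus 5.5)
This follows directly from Theorem~\ref{thm:relational-large-induced}. Suppose $S_{\cP}(\delta)\le A\,\delta^{-d}$ for constants $A,d$ (polynomial growth, after replacing $S_{\cP}$ by its nonincreasing monotone closure as per the standing convention). Fix $\rho\in(0,1]$. For $\varepsilon\le\rho^2/8$ we set $\delta=\varepsilon/(4\rho^2)$. Then $s=S_{\cP}(\delta)\le A\,(4\rho^2/\varepsilon)^d$. Substituting into the sample bound of Theorem~\ref{thm:relational-large-induced} gives
\[
\widetilde O\!\left(\frac{\rho^2}{\varepsilon^2}\,A^2\Bigl(\frac{4\rho^2}{\varepsilon}\Bigr)^{2d}\log\frac2\rho\right).
\]
For fixed $\rho$ this is $\widetilde O(\varepsilon^{-2-2d})$, which is polynomial in $1/\varepsilon$. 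The polylogarithmic factors are polylogarithmic in quantities that are themselves polynomial in $1/\varepsilon$, so they do not spoil polynomiality.

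The only remaining case is $\varepsilon>\rho^2/8$, which the theorem does not cover. Here we use monotonicity in the proximity parameter. A tester for proximity $\varepsilon'=\rho^2/8$ is also a valid tester for every larger $\varepsilon$, since any input that is $\varepsilon$-far is in particular $\varepsilon'$-far. Its sample complexity is a constant depending only on $\rho$ and $\cP$, which is trivially polynomial in $1/\varepsilon$. Combining the two ranges, the sample complexity is at most $\widetilde O(\min\{\varepsilon,\rho^2/8\}^{-2-2d})$ with constants depending on $\rho$. This is polynomial, as claimed.

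There is no genuine obstacle. The only points needing care are the bookkeeping above: the polylogarithmic factors, and the range restriction on $\varepsilon$, which is handled by monotonicity exactly as in the proof of Theorem~\ref{thm:rho-DAG}. Nonemptiness of $\cP$, which Theorem~\ref{thm:relational-large-induced} requires, can be assumed. If $\cP$ is empty, then $\mathrm{Large}_\rho(\cP)$ is empty and is tested trivially by always rejecting.
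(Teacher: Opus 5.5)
Your proposal is correct and takes the same approach as the paper, which simply records the corollary as immediate from Theorem~\ref{thm:relational-large-induced}. Your substitution of $S_{\cP}(\delta)\le A\delta^{-d}$, the monotonicity step for $\varepsilon>\rho^2/8$ (as in the proof of Theorem~\ref{thm:rho-DAG}), and the trivial empty-$\cP$ case are exactly the bookkeeping the paper leaves implicit.
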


\begin{proof}
This is immediate from Theorem~\ref{thm:relational-large-induced}.
\end{proof}

\section{Counting and structure inside far hosts}
\label{sec:far-host-applications}

The characterization contains information beyond testing.  A short fingerprint and a smaller container give an immediate entropy deficit for the family of all induced substructures satisfying the property.

\begin{cor}[Counting induced members]
\label{cor:counting-induced-members}
Let $\Pi$ be a hereditary graph property, and suppose that for an $n$-vertex graph $G$ every set $U\subseteq V(G)$ with $G[U]\in\Pi$ has an ordered fingerprint of length at most $q$ whose associated container has size at most $(1-\eta)n$.  Then
\(\bigl|\{U\subseteq V(G):G[U]\in\Pi\}\bigr| \leq (q+1)n^q 2^{(1-\eta)n}.\)
Moreover, for every $t\geq q$,
\(\frac{ |\{U\in\binom{V(G)}t:G[U]\in\Pi\}| }{\binom{n}{t}} \leq \sum_{j=0}^{q}t^j(1-\eta)^{t-j}.\)
\end{cor}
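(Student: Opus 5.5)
The plan is a direct union bound over fingerprints, organised by the map $U\mapsto f(U)$. Write $\mathcal U=\{U\subseteq V(G):G[U]\in\Pi\}$. By hypothesis each $U\in\mathcal U$ has a fingerprint $f(U)$, an ordered tuple of at most $q$ distinct vertices, and $U\subseteq C(f(U))$ with $|C(f(U))|\leq(1-\eta)n$. Since $C$ is a function of the fingerprint alone, $\mathcal U$ is contained in
\[
\bigcup_{F\in\operatorname{im}(f)}\{U: U\subseteq C(F)\}.
\]
It therefore suffices to bound the number of possible fingerprints and the number of subsets of each container.

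For the first bound, observe that the number of ordered tuples of distinct vertices of length $j$ is $(n)_j\leq n^j$. Summing over $0\leq j\leq q$ gives at most $\sum_{j=0}^q n^j\leq (q+1)n^q$ fingerprints when $n\geq1$; the degenerate case $n=0$ is trivial. Each container has at most $2^{(1-\eta)n}$ subsets, so
\[
|\mathcal U|\leq (q+1)n^q2^{(1-\eta)n}.
\]
One could sharpen this slightly by noting that the fingerprint entries lie in $U$, but this is not needed.

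For the second statement, I would not reprove anything. The family $\{U\in\binom{V(G)}{t}:G[U]\in\Pi\}$ together with the given maps $f,C$ satisfies exactly the hypotheses used in Lemma~\ref{lem:counting-from-containers}: every entry of $f(U)$ lies in $U$, and $U\subseteq C(f(U))$ with $|C|\leq(1-\eta)n$. The only difference is that the lemma is phrased for independent sets of a hypergraph. To match it literally, apply it to the obstruction hypergraph whose edges are the vertex sets of minimal size inducing non-members. Alternatively, and more simply, repeat its proof verbatim: choose $T\in\binom{V(G)}{t}$ uniformly, bound $\Pr[F\subseteq T\subseteq C(F)]$ for each fixed $F$ of length $j$, and sum over the at most $(n)_j$ tuples using $(n)_j/\binom nt=(t)_j/\binom{n-j}{t-j}$ and $\binom{\lfloor(1-\eta)n\rfloor-j}{t-j}/\binom{n-j}{t-j}\leq(1-\eta)^{t-j}$. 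Note that $t\leq n$ may be assumed, since otherwise the left side is $0$.

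There is no real obstacle here. The only point requiring care is that the argument sums over the \emph{image} of $f$, using that the container depends only on the fingerprint, rather than over the sets $U$ themselves. It is exactly this that converts the short fingerprint into an entropy deficit.
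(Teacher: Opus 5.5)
Your proposal is correct and follows the paper's proof essentially verbatim: a union bound over the at most $\sum_{j\le q}n^j\leq(q+1)n^q$ fingerprints with at most $2^{(1-\eta)n}$ subsets per container, and the second bound via the counting argument of Lemma~\ref{lem:counting-from-containers}. Two small quibbles: the auxiliary hypergraph should consist of the \emph{inclusion}-minimal sets inducing non-members (or simply all such sets), and the case $n=0$ is not trivial but actually false when $q\geq1$, since the right-hand side vanishes while $U=\emptyset$ can still be counted, so you should just assume $n\geq1$, as the paper implicitly does.
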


\begin{proof}
There are at most $\sum_{j=0}^{q}n^j\leq(q+1)n^q$ ordered fingerprints.  Every induced member assigned to a fixed fingerprint is a subset of its container, which has at most $(1-\eta)n$ vertices.  This proves the first bound.  The second is the counting argument of Lemma~\ref{lem:counting-from-containers}, specialized to the obstruction hypergraph associated with $G$ and $\Pi$.
\end{proof}

\subsection{Random hosts and edit distance}

The study of hereditary graph properties has largely focused on the structure
and typical form of graphs belonging to the property; see, for example,
\cite{alon2011structure}
and the references therein.  Here the host graph lies far outside the
property, and the question is how the induced members of the property can be
distributed inside it.

For a graph property $\Pi$, let
\(d_\Pi(G)=\min\{|E(G)\triangle E(G')|:G'\in\Pi, \,V(G')=V(G)\}\)
and let $\mathrm{ed}(n,\Pi)=\max_{|V(G)|=n}d_\Pi(G)$.  Alon and Stav
\cite{alon2008furthest} proved that for every hereditary property $\Pi$ there
is a density $p=p(\Pi)$ such that, with high probability,
\(\mathrm{ed}(n,\Pi)=d_\Pi(G(n,p))+o(n^2).\)
Their subsequent work \cite{alon2008maximum} determines the extremal density
and the limiting distance for several natural classes and, in particular,
determines the distance of $G(n,1/2)$ from every hereditary property.

Our characterization refines this edit-distance statement at the level of all
induced members of $\Pi$.  Fix $\varepsilon>0$.  If $\Pi$ has a one-sided
tester of query complexity $Q$, then every sufficiently large graph that is
$\varepsilon$-far from $\Pi$ has ordered-container parameters
$q=O(Q(c\varepsilon))$ and
$\eta=\Omega(Q(c\varepsilon)^{-1})$.  Hence every induced $\Pi$-subgraph lies
in a set of size at most $(1-\eta)n$, and that set is determined by an ordered
tuple of at most $q$ vertices.  In particular, the possible locations of all
induced members are controlled by only polynomially many containers.  The
following random-host consequence isolates precisely what is needed later.

\begin{cor}\label{cor:random-hereditary}
Let $\Pi$ be a hereditary graph property, and suppose that there exist $p\in[0,1]$ and $\varepsilon_0>0$ such that $G(n,p)$ is $\varepsilon_0$-far from $\Pi$  with probability that tends to $1$ as $n\rightarrow\infty$. Then there exist constants $\eta_0>0$ and  $q_0\in\N$, such that, with probability that tends to $1$ as $n\rightarrow\infty$, every set $U\subseteq V(G(n,p))$ with $G(n,p)[U]\in\Pi$ is contained in one of at most $n^{q_0}$ vertex sets of size at most $(1-\eta_0)n$.
\end{cor}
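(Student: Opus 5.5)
We intend to combine the Alon--Shapira theorem that every hereditary property is one-sided testable with the forward direction of Theorem~\ref{thm:hereditary-characterization}, and then count fingerprints. We fix $\varepsilon=\varepsilon_0$ throughout. By Alon and Shapira \cite{AlonShapira}, $\Pi$ admits a size-oblivious one-sided tester whose query complexity at proximity $\varepsilon_0$ is some finite number $Q(\varepsilon_0)$; we do not need any effective bound on it. Canonicalization then yields a canonical tester sampling $s=O(Q(c\varepsilon_0))$ vertices, and hence an $(\varepsilon_0,2/3,s,s)$-removal lemma, exactly as in the proof of Theorem~\ref{thm:hereditary-characterization}. We also apply the convention of the statement that a tester succeeding at proximity $c\varepsilon_0$ does not require this translation: the removal lemma is stated at proximity $\varepsilon_0$ for inputs that are $\varepsilon_0$-far, which is precisely what the first part of Theorem~\ref{thm:hereditary-characterization} provides.

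Next we invoke the first part of Theorem~\ref{thm:removal-container-equivalence}. For every $n\geq s$ and every $n$-vertex graph $G$ that is $\varepsilon_0$-far from $\Pi$, this gives fingerprint maps $f$ into ordered tuples of length at most $s-1$ and containers of size at most $(1-2/(3s))n$. We set $\eta_0=2/(3s)$ and $q_0=s$; both are constants independent of $n$. The collection of containers $\{C(F):F\in\operatorname{im}(f)\}$ has size at most $\sum_{j=0}^{s-1}n^j\leq s\,n^{s-1}$, which is at most $n^{s}=n^{q_0}$ once $n\geq s$. Every $U$ with $G[U]\in\Pi$ satisfies $U\subseteq C(f(U))$, so it lies in one of these sets.

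Finally, on the event that $G(n,p)$ is $\varepsilon_0$-far from $\Pi$, we apply the above deterministically to $G=G(n,p)$ for all $n\geq s$. This event has probability tending to $1$, and so does the conclusion. There is no real obstacle here; the only point needing care is bookkeeping. We must ensure the canonical tester's guarantee holds at proximity exactly $\varepsilon_0$ rather than a rescaled one. This is harmless, because we may simply take $s$ to be the canonical sample size for proximity $\varepsilon_0$, which is finite by Alon--Shapira together with Goldreich--Trevisan. We must also absorb the factor $s$ in the container count into the exponent, which the choice $q_0=s$ already handles.
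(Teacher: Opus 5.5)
Your proposal is correct and follows essentially the same route as the paper. Alon--Shapira gives a one-sided tester, and the container characterization is then applied at the fixed proximity $\varepsilon_0$; you unfold this step via the $(\varepsilon_0,2/3,s,s)$-removal lemma and Theorem~\ref{thm:removal-container-equivalence}, and the polynomially many fingerprints are absorbed into $n^{q_0}$, with the only cosmetic difference being that you take $q_0=s$ via $s\,n^{s-1}\le n^{s}$ for $n\ge s$, whereas the paper sets $q_0=q+1$ and enlarges the threshold $N$.
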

\begin{proof}
By the hereditary testing theorem of Alon and Shapira, $\Pi$ has a
size-oblivious one-sided tester.  Apply
Theorem~\ref{thm:hereditary-characterization} at the fixed proximity parameter
$\varepsilon_0$.  This gives constants $\eta>0$, $q\in\N$, and $N\in\N$ such
that every $n$-vertex graph with $n\geq N$ that is $\varepsilon_0$-far from $\Pi$ has
an ordered-container lemma with fingerprint length at most $q$ and container
size at most $(1-\eta)n$.

With high probability, the realization of $G(n,p)$ is
$\varepsilon_0$-far from $\Pi$, so the preceding conclusion applies.  There
are at most $\sum_{j=0}^{q}(n)_j\leq(q+1)n^q$ possible ordered fingerprints.
Set $q_0=q+1$.  After increasing the fixed threshold $N$ if necessary,
$(q+1)n^q\leq n^{q_0}$.  Taking $\eta_0=\eta$, the corresponding containers
have the required size and cover every set inducing a member of $\Pi$.
\end{proof}

To obtain a concrete example, we use the exact asymptotic distance of
$G(n,1/2)$ from a hereditary property established by Alon and Stav
\cite{alon2008maximum}.  Define the \textit{binary chromatic number} of $\Pi$
by
\(\chi_B(\Pi) = 1+\max\{r+s:\Pi_{r,s}\subseteq \Pi\},\)
where $\Pi_{r,s}$ denotes the property of being $(r,s)$-colorable. 

\begin{thm}[Alon--Stav \cite{alon2008maximum}]\label{thm:alon-stav-half}
Let $\Pi$ be a hereditary graph property for which
$2\leq\chi_B(\Pi)<\infty$.  Then, with high probability,
\(d_\Pi(G(n,1/2)) = \left( \frac{1}{2(\chi_B(\Pi)-1)}\pm o(1) \right)\binom{n}{2}.\)
\end{thm}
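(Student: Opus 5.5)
Since the statement is quoted from Alon and Stav, the paper cites rather than proves it. If I were to prove it, I would treat the upper and lower bounds separately. Throughout I write $k=\chi_B(\Pi)-1\geq1$, so that $\Pi_{r,s}\subseteq\Pi$ for some $r+s=k$, while $\Pi_{r',s'}\not\subseteq\Pi$ whenever $r'+s'=k+1$.

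\emph{Upper bound.} Fix $r,s$ with $r+s=k$ and $\Pi_{r,s}\subseteq\Pi$. Partition $V(G(n,1/2))$ into $k$ parts of size $n/k$. Delete all edges inside the first $r$ parts and add all missing edges inside the last $s$ parts. The result is $(r,s)$-colourable and hence lies in $\Pi$. Each part spans about $\binom{n/k}{2}/2$ edges (respectively non-edges) with high probability by Chernoff bounds, so the total cost is
\[
k\cdot\tfrac12\tbinom{n/k}{2}=\Bigl(\tfrac{1}{2k}+o(1)\Bigr)\tbinom n2 .
\]

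\emph{Lower bound.} For each pair $(r',s')$ with $r'+s'=k+1$, fix a graph $F_{r',s'}\notin\Pi$ that is $(r',s')$-colourable. Let $h$ be the maximum order of these finitely many witnesses. By heredity, every $G'\in\Pi$ is induced-$F_{r',s'}$-free for all such pairs. Suppose $G'\in\Pi$ satisfies $|E(G)\triangle E(G')|\leq(\frac1{2k}-\beta)\binom n2$. I would then proceed as follows.
\begin{enumerate}
\item Apply the strong (Alon--Fischer--Krivelevich--Szegedy) regularity lemma to $G'$. This gives clusters $W_1,\dots,W_m$ and, by a Ramsey argument inside the refinement, sub-clusters on which $G'$ is nearly complete or nearly empty, together with pairwise regular densities $d_{ij}$.
\item Use the quasirandomness of $G(n,1/2)$: every pair of linear-size sets has density about $1/2$. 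Hence the edit cost is essentially $\sum_{i,j}|W_i||W_j|\,|d_{ij}-\tfrac12|$, and the within-cluster cost is determined by whether a cluster is made sparse or dense.
\item Label each cluster ``independent'' or ``clique'' according to $G'$ inside it. If $k+1$ sub-clusters with the labels of some $(r',s')$ had all cross densities bounded away from $0$ and $1$, the induced counting lemma would embed $F_{r',s'}$ into $G'$, which is a contradiction.
\item Conclude that the reduced weighted structure, with weights $x_i=|W_i|/n$, has no ``$(k+1)$-clique'' of pairs with intermediate density. Every pair of clusters whose density is intermediate costs nothing, but every cluster, and every pair pushed to density $0$ or $1$, costs $\tfrac12|W_i||W_j|$.
\end{enumerate}

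This reduces the problem to a quadratic program: minimise $\tfrac12\bigl(\sum_i x_i^2+\sum_{ij\in B}2x_ix_j\bigr)$, where $B$ is the set of non-intermediate pairs, over weightings whose intermediate pairs span no $K_{k+1}$. By a Motzkin--Straus type argument (symmetrisation or merging of clusters), the minimum is $\frac1{2k}$. It is attained by $k$ equal blocks, where merging clusters joined by non-intermediate pairs never increases the cost. This contradicts the assumed edit cost once $m$ is large relative to $\beta$.

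\emph{Main obstacle.} I expect step 3 and the optimisation to be the hard part. Turning ``no induced $F_{r',s'}$ for every split $r'+s'=k+1$'' into the clean combinatorial constraint on intermediate pairs requires an induced counting lemma that respects both the clique/independent labels and the choice of which split is forbidden. The exact value $\frac1{2k}$ then needs a careful Motzkin--Straus argument adapted to this mixed labelling. I would first attempt the symmetrisation proof for the case where all clusters carry the same label, and then argue that mixed labels cannot lower the minimum.
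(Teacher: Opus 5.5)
The paper gives no proof of this theorem; it quotes it from Alon and Stav. Your outline is essentially the standard argument from that literature and is sound: an explicit $k$-part $(r,s)$-colouring for the upper bound, and for the lower bound strong regularity applied to $G'$, a reduced structure whose intermediate-density pairs span no $K_{k+1}$ (every split $r'+s'=k+1$ has a forbidden witness), then Motzkin--Straus. The obstacle you anticipate does not arise: at $p=1/2$ every non-intermediate pair costs $\tfrac12$ whatever its colour, and the $K_{k+1}$-freeness constraint ignores labels, so Motzkin--Straus directly gives $\sum_{ij\in B}2x_ix_j\geq\tfrac1k-\sum_ix_i^2\geq\tfrac1k-\tfrac1m$; this also lets you drop the within-cluster term $\sum_ix_i^2$, which your accounting does not justify since the clusters $W_i$, unlike the sub-clusters, need not be homogeneous in $G'$.
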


Under the hypotheses of the theorem, this quantity is a positive constant multiple of
$n^2$, so Corollary~\ref{cor:random-hereditary} applies to $p=1/2$.

Let $\Pi_{\mathrm{perf}}$ be the property of being perfect.  Every bipartite
graph is perfect, so $\Pi_{2,0}\subseteq\Pi_{\mathrm{perf}}$ and hence
$\chi_B(\Pi_{\mathrm{perf}})\geq3$.  Conversely, $C_5$ is not perfect and is
$(r,s)$-colourable for every $r,s\geq0$ with $r+s=3$.  Therefore no
$\Pi_{r,s}$ with $r+s=3$ is contained in $\Pi_{\mathrm{perf}}$, and
$\chi_B(\Pi_{\mathrm{perf}})=3$.  The Alon--Stav theorem gives, with high
probability,
\(d_{\Pi_{\mathrm{perf}}}(G(n,1/2)) =\left(\frac14\pm o(1)\right)\binom n2.\)
Combining this with Corollary~\ref{cor:random-hereditary} yields the following.

\begin{cor}\label{cor:perfectness}
There exist constants $\eta>0$ and $q\in\N$ such that, with probability
tending to $1$ as $n\to\infty$, every perfect induced subgraph of
$G(n,1/2)$ is contained in one of at most $n^q$ vertex sets of size at most
$(1-\eta)n$.
\end{cor}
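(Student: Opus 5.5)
The plan is to obtain Corollary~\ref{cor:perfectness} as a direct instance of Corollary~\ref{cor:random-hereditary} with $\Pi=\Pi_{\mathrm{perf}}$ and $p=1/2$. The only hypothesis to verify is that $G(n,1/2)$ is $\varepsilon_0$-far from $\Pi_{\mathrm{perf}}$ with probability tending to $1$, for some fixed $\varepsilon_0>0$.

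\emph{Step 1.} Check that perfection is hereditary. An induced subgraph of a perfect graph is perfect, because the defining condition $\chi(H)=\omega(H)$ for every induced subgraph $H$ passes to induced subgraphs.

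\emph{Step 2.} Confirm $\chi_B(\Pi_{\mathrm{perf}})=3$, as recorded just before the statement.
\begin{itemize}
\item Every graph in $\Pi_{2,0}$ is bipartite, hence perfect, so $\chi_B\geq 3$.
\item For $r+s=3$, the cycle $C_5$ lies in $\Pi_{r,s}$. Its vertex set splits into two edges and one singleton, and edges are cliques of size two. It also splits into two independent pairs and a singleton, and any three consecutive vertices $a,b,c$ split as $\{a,c\}$ independent together with $\{b\}$. Mixing these decompositions gives $C_5\in\Pi_{r,s}$ for all four choices of $(r,s)$.
\item Since $C_5$ is not perfect, no $\Pi_{r,s}$ with $r+s=3$ is contained in $\Pi_{\mathrm{perf}}$.
\item For $r+s\geq 4$, the class $\Pi_{r,s}$ contains $\Pi_{r',s'}$ for some $r'+s'=3$, because empty parts are allowed. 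So no larger pair works either.
\end{itemize}
This gives $\chi_B(\Pi_{\mathrm{perf}})=3$.

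\emph{Step 3.} Apply Theorem~\ref{thm:alon-stav-half}. With high probability,
\[
d_{\Pi_{\mathrm{perf}}}(G(n,1/2))\geq\left(\tfrac14-o(1)\right)\binom n2 .
\]
In the normalization $\dist(G,\Pi)=d_\Pi(G)/n^2$, this distance is at least $1/8-o(1)$. Taking $\varepsilon_0=1/10$, the graph $G(n,1/2)$ is $\varepsilon_0$-far from $\Pi_{\mathrm{perf}}$ with probability tending to $1$.

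\emph{Step 4.} Invoke Corollary~\ref{cor:random-hereditary}. It supplies constants $\eta_0>0$ and $q_0$ such that, with probability tending to $1$, every $U$ with $G(n,1/2)[U]$ perfect lies in one of at most $n^{q_0}$ sets of size at most $(1-\eta_0)n$. Renaming $\eta=\eta_0$ and $q=q_0$ gives the statement.

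There is no real obstacle; the work is confirming $\chi_B(\Pi_{\mathrm{perf}})=3$, which amounts to exhibiting the $C_5$ decompositions above. Note that Corollary~\ref{cor:random-hereditary} already uses the Alon--Shapira tester, so no quantitative input on perfection testing is needed.
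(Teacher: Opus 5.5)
Your proposal is correct and follows the paper's argument: show $\chi_B(\Pi_{\mathrm{perf}})=3$ using bipartite graphs and $C_5$, use Theorem~\ref{thm:alon-stav-half} to get that $G(n,1/2)$ is $\varepsilon_0$-far from $\Pi_{\mathrm{perf}}$ with high probability, and then apply Corollary~\ref{cor:random-hereditary} with $p=1/2$. Your explicit $C_5$ partitions, the monotonicity remark for $r+s\geq 4$, and the conversion to the $n^2$ normalization giving $\varepsilon_0=1/10$ fill in details the paper leaves implicit.
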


\printbibliography

\end{document}